\documentclass[11pt, reqno]{amsart}

\usepackage{graphicx}
\usepackage{epstopdf}
\usepackage{subcaption}
\usepackage[utf8]{inputenc}
\usepackage[T1]{fontenc}
\usepackage[english]{babel}
\usepackage{amsmath, amsthm, amssymb, amsfonts}
\usepackage{ae}
\usepackage{bbm}
\usepackage{caption}
\usepackage{array}
\usepackage[hmarginratio=1:1]{geometry}
\usepackage[hyphens]{url}
\usepackage[breaklinks=true]{hyperref}

\usepackage{mathrsfs}
\usepackage{placeins} 
\usepackage{float}
\usepackage[nodayofweek]{datetime}
\usepackage{enumitem}
\usepackage{mathtools}
\usepackage{dsfont}
\usepackage{comment}
\usepackage{csquotes}

\DeclareMathOperator{\Tr}{Tr}

\newtheorem{theorem}{Theorem}[section]

\newtheorem{lemma}[theorem]{Lemma}
\newtheorem{proposition}[theorem]{Proposition}
\newtheorem{corollary}[theorem]{Corollary}

\numberwithin{theorem}{section}
\theoremstyle{remark}

\usepackage[backend=bibtex, style=numeric-comp]{biblatex}
\appto{\bibsetup}{\sloppy}
\AtEveryBibitem{
  \clearfield{month}
}

\begin{document}

\allowdisplaybreaks

\title[Sums of Dirichlet eigenvalues]{Sums of Dirichlet eigenvalues in proximity of the disk and the ball}

\author{Matthias Baur}
\address{\textnormal{(M. Baur)} Institute of Analysis, Dynamics and Modeling, Department of Mathematics, University of
Stuttgart, Pfaffenwaldring 57, 70569 Stuttgart, Germany.}
\email{\href{mailto:matthias.baur@mathematik.uni-stuttgart.de}{matthias.baur@mathematik.uni-stuttgart.de}}

\begin{abstract}
We derive second order perturbation expansions of volume-normalized sums of the first $n$ Dirichlet eigenvalues for near-circular and near-spherical domains.  The disk (resp.~ball) is only a critical shape when $n$ is the last index of an eigenvalue cluster, and in dimension two and three our expansions yield a finitely checkable criterion deciding whether the disk or ball is a weak local minimizer or a saddle point. In two dimensions, this allows us to show that there is an infinite number of cases in which the disk is a saddle point for finite eigenvalue sums. 
\end{abstract}

\maketitle

% ------------------  Introduction  --------------------

%\allowdisplaybreaks

\section{Introduction and main results}

\subsection{Introduction}
We consider the Dirichlet Laplacian $-\Delta_\Omega^{\rm D}$ on bounded, open domains $\Omega\subset\mathbb{R}^d$, $d\geq 2$. Its eigenvalues shall be $\lambda_k(\Omega)$, $k\geq 1 $. As usual we count eigenvalues with multiplicities and assume them to be sorted in increasing order. We are interested in the shape optimization problem
\begin{align} \label{eq:inf_problem}
    \inf\bigg\{ \sum\limits_{k=1}^n \lambda_k(\Omega) \, : \, \Omega \subset\mathbb{R}^d \text{ open},\, |\Omega| = 1  \bigg\}.
\end{align}
Existence of a minimizer of \eqref{eq:inf_problem} is established by the works of Buttazzo-Dal Maso \cite{Buttazzo1993}, Bucur \cite{Bucur2012}, Mazzoleni-Pratelli \cite{Mazzoleni2013} (at least for the relaxed problem in the class of quasi-open sets).

In the case $n=1$, the well-known Faber-Krahn theorem \cite{Faber1923, Krahn1925, Krahn1926} states that the ball minimizes $\lambda_1(\Omega)$. Recently, it has been claimed \cite{Alexa2026} that the ball minimizes \eqref{eq:inf_problem} among convex domains for any $n\in\mathbb{N}$. Here, we refute this claim. The ball is not a minimizer of \eqref{eq:inf_problem} in an infinite number of cases and regardless of a restriction of the problem to convex sets. In fact, the ball is a critical shape of \eqref{eq:inf_problem} if and only if $n$ is the last index of an eigenvalue cluster of the ball\footnote{The following proof idea works for any dimension $d \geq 2$: Start with Hadamard's shape derivative formula for degenerate eigenvalues. For every eigenvalue cluster, there exists a deformation of the ball that leads to linear separation of the eigenvalues of that cluster. However, independent of the deformation, the trace of all eigenvalues of a cluster always experiences only a quadratic correction - the individual linear corrections within a cluster cancel out. A decomposition of the full eigenvalue sum into sums over eigenvalue clusters (with the last one being complete if and only if $n$ is the last index of that eigenvalue cluster) then proves the claim.}. However, to our knowledge, in the cases where the ball is a critical shape it is far from understood whether it is a minimizer of \eqref{eq:inf_problem} for $n\geq 2$. We explore this question perturbatively by considering near-circular and near-spherical domains in dimensions $d=2$ and $3$. Going up to second order corrections allows us to formulate a necessary condition for minimality of the ball and lets us identify cases where this condition fails and the ball is a saddle point.

Existing results on minimizers for eigenvalue sums either concern geometric normalizations different from the volume constraint, asymptotic optimality or certain partial sums. Laugesen and Siudeja \cite{Siudeja2011} proved that among triangles of given diameter, the equilateral triangle minimizes $\lambda_1(\Omega) + ... + \lambda_n(\Omega)$ and conjectured that the disk is the minimizer among all domains. In another paper \cite{Laugesen2011a}, they showed that the disk maximizes $(\lambda_1(\Omega) + ... + \lambda_n(\Omega)) |\Omega|^3 / I(\Omega)$ for any $n\in\mathbb{N}$ where $I(\Omega)$ denotes the moment of inertia about the center of mass. For \eqref{eq:inf_problem}, Freitas \cite{Freitas2017} proved asymptotics of the attained minimal eigenvalue sums, but it remains a conjecture that the ball is the asymptotic minimizer of \eqref{eq:inf_problem}, i.e.~minimizers of \eqref{eq:inf_problem} converge to the ball for $n\to\infty$. Larson \cite{Larson2019} proved asymptotic optimality of the ball if minimization in \eqref{eq:inf_problem} is restricted to convex sets. Numerical studies on minimizers of certain eigenvalue sums have been carried out \cite{Antunes2013,Osting2013,Osting2013a}, but these examine (weighted) sums of two/three consecutive Dirichlet eigenvalues rather than \eqref{eq:inf_problem}. Regarding conjectures about \eqref{eq:inf_problem}, we are only aware of Henrot's conjecture that $\lambda_2(\Omega)+\lambda_3(\Omega)$ and thus, by Faber-Krahn, \eqref{eq:inf_problem} for $n=3$ is minimized by the disk \cite{Henrot2006}.

In spirit, the perturbative analysis we are going to present here goes back to Lord Rayleigh himself who substantiated his claim that the disk minimizes $\lambda_1(\Omega)$ among domains of equal area\footnote{This would be proven later by Faber and Krahn and become the Faber-Krahn theorem.} by showing that small perturbations of the circular boundary of the disk increase the principal eigenvalue \cite[Vol. 1, Chapter IX, \S\S 209-210]{Rayleigh1894}. P{\'o}lya and Szeg{\H o}  \cite[Chapter VI]{Polya1951} applied this idea systematically to other isoperimetric problems. The perturbative analysis that we present for eigenvalues of near-circular and near-spherical domains is based on (and extended to dimension $d=3$ from) the planar exposition by Wolf and Keller \cite{Wolf1994} who used perturbations of a circular boundary written in a Fourier expansion to show that the disk is a local minimum for $\lambda_3(\Omega)$ under area constraint. Following their analysis, Berger \cite{Berger2014, Berger2015} showed that Dirichlet eigenvalues $\lambda_k(\Omega)$ can never be minimized by the disk or any disjoint unions of disks if $k\geq 5$. For Steklov eigenvalues, it has been recently shown through the same Fourier-type perturbations that the disk is never a maximizer for Steklov eigenvalues $\sigma_{k}(\Omega)$, $k\geq 2$, under area constraint \cite{Viator2018,Alland2026}. Bogosel, Henrot and Lucardesi \cite{Bogosel2018} use Fourier-type perturbations to the support function instead of the circular boundary and derive second-order corrections to show that the disk cannot be a local minimizer for Dirichlet eigenvalues $\lambda_k(\Omega)$ under diameter constraint except for a finite list of $k\in\mathbb{N}$. In all these results, the disk is excluded as a local minimizer by deriving first (second) order perturbation terms and checking whether the disk is non-critical or a saddle point. This idea is also the basis of this paper.

\subsection{Main results}

Let us denote $\mathbb{D} = \{ x\in \mathbb{R}^2 \, : \,\Vert x\Vert < 1\}$ and $\mathbb{B} = \{ x\in \mathbb{R}^3 \, : \,\Vert x\Vert < 1\}$ and let us introduce renormalized eigenvalue sums given by
\begin{align*}
    S_n(\Omega) := \sum\limits_{k=1}^n \lambda_k(\Omega)|\Omega|^{\frac{2}{d}} .
\end{align*}
Then \eqref{eq:inf_problem} is equivalent to
\begin{align*}
    \inf\{ S_n(\Omega) \, : \, \Omega \subset\mathbb{R}^d \text{ open} \} 
\end{align*}
by scaling of Dirichlet eigenvalues. We use this unconstrained form of the shape optimization problem in the following.

For a shape functional $F(\Omega)$, we adopt the notion from \cite{Bogosel2018} and call a shape $\Omega$ a \textit{weak local minimizer of $F(\Omega)$} if $\Omega$ is locally optimal for a certain admissible class of deformations. A necessary (sufficient) condition is that $\Omega$ is a critical shape and has non-negative (positive) second variation along all admissible directions. In this paper, we consider deformations of the boundary by Fourier modes or spherical harmonics and understand local optimality modulo translations (for details see Sections \ref{subsec:twodim_framework} and \ref{subsec:threedim_framework}). Furthermore, we call a shape $\Omega$ a \textit{saddle point of $F(\Omega)$} if $\Omega$ is a critical shape but there exists a deformation with negative second variation.

Our first result categorizes the disk in these terms for any renormalized eigenvalue sum $S_n(\Omega)$, $n\in\mathbb{N}$.

\begin{theorem}[Local optimality of the disk]\label{thm:local_minimizer_disc}
Let $n\in\mathbb{N}$.
\begin{enumerate}
\item If the $n$-th eigenvalue of the unit disk is the first instance of a double eigenvalue, i.e.~$\lambda_n(\mathbb{D})  = \lambda_{n+1}(\mathbb{D})$, then the disk is not critical and thus not a (weak) local minimizer of $S_n(\Omega)$.
\item If the $n$-th eigenvalue of the unit disk is a single eigenvalue or the second instance of a double eigenvalue, i.e.~$\lambda_{n}(\mathbb{D}) < \lambda_{n+1}(\mathbb{D}) $, then the disk is critical and it is a weak local minimizer of $S_n(\Omega)$ if 
$$d_j+d_{-j} > 0 \quad \text{ for all } j\geq 2$$ 
where $d_j$ is given by
\begin{align} \label{eq:local_minimizer_disc_case2_dj}
\begin{split}
d_j &=   \sum\limits_{j_{0p}^2 \leq \lambda_n(\mathbb{D})} 2\pi j_{0p}^2 \left(1 + j_{0p}\frac{J_j'(j_{0p})}{J_j(j_{0p})}   \right)  \\
& \qquad+ \sum\limits_{\substack{j_{mp}^2 \leq \lambda_n(\mathbb{D}), \\ m>0}} 4\pi j_{mp}^2 \left( 1 + \delta_{j= 2m} +  j_{mp}\frac{J_{m-j}'(j_{mp})}{J_{m-j}(j_{mp})} \delta_{j\neq 2m} \right).
\end{split}
\end{align}
If $d_{j}+d_{-j}$ is negative for some $j\geq 2$, then the disk is a saddle point of $S_n(\Omega)$.
\end{enumerate}
\end{theorem}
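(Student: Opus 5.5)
We will follow the Fourier-mode perturbation framework of Wolf and Keller. We parametrize near-circular domains as $\Omega_\varepsilon = \{(r,\theta): r < 1 + \varepsilon h(\theta)\}$ with $h(\theta) = \sum_{j\in\mathbb{Z}} a_j e^{ij\theta}$, where $a_{-j}=\overline{a_j}$. We set $a_0$ aside, since it is absorbed by the scale invariance of $S_n$, and we discard $a_{\pm1}$ because these modes are translations to first order.

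\textbf{Eigenvalue expansion for each cluster.} For every eigenvalue $j_{mp}^2$ of the disk we expand the eigenvalues of $\Omega_\varepsilon$ as $\lambda = j_{mp}^2 + \varepsilon \lambda^{(1)} + \varepsilon^2\lambda^{(2)} + O(\varepsilon^3)$. The eigenfunctions are expanded in Bessel series $\sum_k c_k J_k(\sqrt{\lambda}r)e^{ik\theta}$. We impose the Dirichlet condition at $r = 1+\varepsilon h$ and Taylor expand in $\varepsilon$.

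For $m=0$ (simple eigenvalue) this gives
\[
\lambda^{(1)} = -2 j_{0p}^2 a_0,
\]
and a second-order term that is a sum over $j$ of $|a_j|^2$ times an expression in $j_{0p}^2\bigl(1 + j_{0p}J_j'(j_{0p})/J_j(j_{0p})\bigr)$.

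For $m>0$ the eigenvalue is double, with modes $e^{\pm im\theta}$. Degenerate perturbation theory produces a $2\times2$ first-order matrix with diagonal entries $-2j_{mp}^2 a_0$ and off-diagonal entries proportional to $a_{\pm 2m}$.

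\textbf{Part (1).} Suppose $\lambda_n(\mathbb{D})$ is the first member of a double pair. Then $S_n$ picks up only the lower eigenvalue of that $2\times 2$ matrix. This gives a term $-c\,|a_{2m}|\,\varepsilon$ that does not vanish for $a_{2m}\neq 0$. Hence the one-sided derivative is negative along this direction, and the disk is not critical.

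\textbf{Part (2).} Here every cluster is complete. The sum of each cluster equals the trace of the perturbed $2\times2$ block. The off-diagonal linear terms cancel in the trace, and the diagonal linear parts are proportional to $a_0$. These are cancelled by the $|\Omega|$ factor: $|\Omega_\varepsilon| = \pi(1 + 2\varepsilon a_0 + \varepsilon^2(a_0^2 + \sum_j |a_j|^2)) $, up to the normalisation. So the disk is critical.

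For the second order we use the second-order degenerate formula. The trace of a $2\times 2$ block at order $\varepsilon^2$ is the sum of the diagonal second-order corrections. Coupling between $e^{im\theta}$ and $e^{ik\theta}$ occurs through $a_{k-m}$ and involves the factor $J_k'/J_k$ evaluated at $j_{mp}$. Relabelling $j = m-k$ (together with its mirror for $-m$) produces the terms $J_{m-j}'(j_{mp})/J_{m-j}(j_{mp})$. In the special case $k = -m$, i.e.\ $j=2m$, the coupled mode lies inside the cluster. It therefore does not enter the resolvent-type sum, and the contribution reduces to the extra constant $\delta_{j=2m}$.

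Next we combine the second-order eigenvalue sum with the expansion of $|\Omega|$ and eliminate $a_0$, or equivalently impose the area constraint $a_0 = -\tfrac{\varepsilon}{2}\sum_{j}|a_j|^2+\dots$. The result should be
\[
S_n(\Omega_\varepsilon) = S_n(\mathbb{D}) + \varepsilon^2\sum_{j\ge2}(d_j+d_{-j})|a_j|^2\cdot c + O(\varepsilon^3)
\]
with a positive constant $c$. Positivity of all coefficients gives a weak local minimizer. A negative coefficient gives a saddle point, by choosing $h=\cos(j\theta)$.

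\textbf{Main obstacle.} We expect the hardest step to be the exact second-order bookkeeping. This consists of three things: the Taylor terms $\varepsilon^2 h^2 \partial_r^2$ of the boundary condition, the Bessel identity $J_m''(j_{mp}) = -J_m'(j_{mp})/j_{mp}$ used to simplify them, and the normalization of the degenerate eigenvectors. These have to combine exactly into the constants $1$ and $1+\delta_{j=2m}$ in $d_j$. We would first verify the simple case $m=0$ against the Wolf--Keller computation for $\lambda_1$, and then extend it to the trace of the degenerate blocks.
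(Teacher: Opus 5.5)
Your plan follows the paper's own route: Wolf--Keller matching of the Bessel--Fourier series on the perturbed boundary, cancellation of the linear splitting $\mp j_{mp}|a_{2m}|$ within complete clusters (with the area factor absorbing $a_0$), and a second-order cluster sum whose off-diagonal pieces cancel, leaving the diagonal weights $1+\delta_{j=2m}$ and $j_{mp}J_{m-j}'(j_{mp})/J_{m-j}(j_{mp})$ that the paper computes in Propositions~\ref{prop:wk_lemma} and~\ref{prop:prop_Sn_exp}. The differences are minor: your trace/projection formulation matches what the paper does in three dimensions and handles the undetermined in-cluster corrections $c'_{\pm m}$ and the case $a_{2m}=0$ uniformly, while in 2D the paper fixes the phases $c_{\pm m}$ and uses a reality argument; and where you simply discard $a_{\pm1}$, the paper checks $d_1+d_{-1}=0$ via Bessel recurrences---your shortcut is legitimate only because the $O(\varepsilon^2)$ gap between $1+\varepsilon h$ and an exact translate enters solely through the first variation, which vanishes at the critical disk.
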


Here, the functions $J_m$ are Bessel functions of the first kind and $j_{mp}$ denotes the $p$-th positive zero of $J_m$.

Case (2) of the theorem follows from a second order perturbation expansion of $S_n(\Omega)$ near the disk (see Proposition \ref{prop:prop_Sn_exp}) and the condition stated is simply that all second order variations are positive\footnote{We exclude the Fourier mode $j=1$ which is always zero since it corresponds to infinitesimal translations.}.

The theorem allows one to numerically check which $n$ satisfy the condition $d_j+d_{-j} > 0$ for all $j\geq 2$ by evaluating $d_j+d_{-j}$ for a finite number of small $j$'s. This is sufficient as Lemma \ref{lem:dj_pos_2mmax} shows. Each $n$ for which the disk is critical then falls into one of two categories: either the disk is a weak local minimizer or a saddle point of $S_n(\Omega)$\footnote{This dichotomy assumes that no coefficient $d_j+d_{-j}$, $j\geq 2$, vanishes, otherwise second order information is inconclusive. Numerically, this does not occur for the values of $n$ we have checked.}. The first few $n$ give the following picture: \vspace*{2mm}

\begin{center}
\begin{tabular}{l|l}
$\mathbb{D}$ is a   & $n$ (checked up to 150) \\ \hline
weak local     & $1, 3, 6, 12, 17, 23, 30, 34, 42, 51, 61, 67, 78, 82, 90, 92, 94,$ \\
minimizer & $105, 107, 119, 127, 129, 142, 144, ... $  \\ \hline
saddle    & $5, 8, 10, 14, 15, 19, 21, 25, 27, 29, 32, 36, 38, 40, 44, 46, 48,$ \\
point     & $50, 53, 55, 57, 59, 63, 65, 69, 71, 73, 74, 76, 80, 84, 86, 88, 96,$ \\
     & $98, 100, 102, 104, 109, 111, 113, 115, 117, 121, 123, 125, 131,$ \\
     & $133, 135, 137, 139, 140, 146, 148, 150, ... $
\end{tabular}
\end{center}
\vspace*{2mm}

In the weak local minimizer category, we have confirmed the first seven terms by running the gradient descent shape optimization algorithm developed in \cite{Antunes2012}. In these cases, the disk appears not just to be a local minimizer but also the global minimizer of $S_n(\Omega)$. The second category, the saddle point category, seems to be much more densely populated than the weak local minimizer category. We can prove that it contains an infinite number of members.

\begin{theorem} \label{thm:saddle_point_infinitely_many}
    There are infinitely many $n$ for which the disk $\mathbb{D}$ is a saddle point of $S_n(\Omega)$. 
\end{theorem}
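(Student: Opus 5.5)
We use criterion (2) of Theorem \ref{thm:local_minimizer_disc}: it suffices to exhibit infinitely many $n$ with $\lambda_n(\mathbb{D})<\lambda_{n+1}(\mathbb{D})$ and some $j\ge 2$ with $d_j+d_{-j}<0$. The approach is to pick a family of cluster-ending indices $n$, fix a single Fourier mode $j$ tuned to $n$ (most naturally $j$ comparable to $2m_{\max}$, where $m_{\max}$ is the largest angular index among eigenvalues $j_{mp}^2\le\lambda_n(\mathbb{D})$), and show that $d_j+d_{-j}$ is dominated by a negative contribution coming from the eigenvalues at the top of the spectrum.

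\textbf{Step 1: choice of $n$.} Take $n$ to be the last index of the cluster $j_{m1}^2$, i.e.\ $\lambda_n(\mathbb{D})=j_{m1}^2$ for a large $m$. Choose $m$ in an infinite set where $j_{m1}$ is the largest eigenvalue with angular index $\ge m$ below the threshold. In particular $j_{m+1,1}^2>\lambda_n$. Then $m_{\max}=m$.

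\textbf{Step 2: isolate the sign-sensitive term.} In \eqref{eq:local_minimizer_disc_case2_dj}, most summands are $4\pi j_{mp}^2(1+j_{mp}J_{m-j}'(j_{mp})/J_{m-j}(j_{mp}))$. The quotient $xJ_\nu'(x)/J_\nu(x)$ can be very negative when $j_{mp}$ lies just above a zero of $J_{m-j}$, and it has poles there. I would take $j=2m+1$ (or a nearby value), so that $m-j=-m-1$ and $m+j=3m+1$. Using $J_{-\nu}=(-1)^\nu J_\nu$, the contribution of the top eigenvalue $j_{m1}$ to $d_j+d_{-j}$ involves $j_{m1}J_{m+1}'(j_{m1})/J_{m+1}(j_{m1})$. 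The recurrence $J_{m+1}'=J_m-\frac{m+1}{x}J_{m+1}$ evaluated at a zero of $J_m$ gives exactly $-(m+1)$. The summand is therefore $4\pi j_{m1}^2(1-(m+1))=-4\pi m\,j_{m1}^2$, which is negative and of order $m^3$. The companion term with $J_{3m+1}$ is harmless: since $j_{m1}<3m+1$, we are in the monotone region, so $xJ_\nu'/J_\nu\approx\sqrt{\nu^2-x^2}>0$ is bounded by $O(m)$ and positive. One must also check which terms have $j\neq 2m$ exactly; this holds for every summand when $j$ is odd.

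\textbf{Step 3: bound the remaining sum.} All other $(m',p)$ with $j_{m'p}\le j_{m1}$ must be shown not to compensate. For $|m'-j|$ large relative to $j_{m'p}$ (here $|m'-j|\ge m+1>j_{m'p}$ roughly), $xJ_\nu'/J_\nu$ is evaluated below the first zero of $J_\nu$, where it is positive. So these terms are positive and of size $\lesssim x\sqrt{\nu^2-x^2}$, which is bad. Each such term is $O(m^3)$, and there are $\sim m^2$ of them, so a crude bound fails. This is the main obstacle. I would instead exploit the Weyl-type cancellation: the derivative $\frac{d}{dx}(xJ_\nu'/J_\nu)$ is controlled by Riccati/Debye asymptotics. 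I would try to select $j$ adaptively, and possibly $n$ as well, so that some eigenvalue $j_{m'p}$ sits just above a zero $j_{|m'-j|,q}$, producing a pole-sized negative contribution. This would be arranged by a pigeonhole/density argument on the interlacing of Bessel zeros over infinitely many $n$. The large negative term can be made arbitrarily large in absolute value near the pole, while the rest is $O(\mathrm{poly}(m))$.

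\textbf{Step 4: conclude.} With a negative coefficient for infinitely many admissible $n$, Theorem \ref{thm:local_minimizer_disc}(2) gives a saddle point for each of them. I expect the delicate part to be making the near-pole closeness quantitative, via Bessel zero asymptotics (McMahon/Olver) together with an equidistribution argument.
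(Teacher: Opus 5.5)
Your proposal has a genuine gap, and the one concrete computation it relies on is incorrect. For $j=2m+1$ the top cluster $j_{m1}$ enters $d_j+d_{-j}$ through both $d_j$ and $d_{-j}$, so its total contribution is
\[
4\pi j_{m1}^2\Big(2-(m+1)+j_{m1}\frac{J_{3m+1}'(j_{m1})}{J_{3m+1}(j_{m1})}\Big).
\]
The companion term is not harmless. The function $y=xJ_\nu'/J_\nu$ satisfies $xy'=\nu^2-x^2-y^2$, and a comparison argument gives $y(x)>\sqrt{\nu^2-x^2}$ on $(0,\nu)$. So the companion term is at least $\sqrt{(3m+1)^2-j_{m1}^2}\approx 2\sqrt2\,m$ (recall $j_{m1}=m+O(m^{1/3})$), which exceeds $m-1$. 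The top-cluster contribution is therefore positive of order $m^3$, not negative.

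Step 3 then concedes that the $\sim m^2$ positive terms of size $O(m^3)$ cannot be controlled. In place of an argument, it hopes that some $j_{m'p}$ can be placed extremely close to a zero of $J_{|m'-j|}$. That closeness must be from \emph{below}, not above: near a zero $z$ one has $J_\nu'/J_\nu\approx 1/(x-z)$. This is a quantitative near-coincidence statement for zeros of Bessel functions of different orders, and no tool for it is available; Bourget/Siegel only says they never coincide. Moreover, the same pole mechanism produces equally large positive terms whenever an eigenvalue falls just above a zero. Nothing in Steps 2--4 produces even a single $n$ with a negative coefficient.

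The missing idea is to avoid estimating $d_j+d_{-j}$ term by term altogether. The paper fixes the elliptical mode $j=2$ and bounds it variationally. The exact ellipse $E_\varepsilon=\{(e^{\varepsilon}x,e^{-\varepsilon}y)\}$ has area $\pi$, and its eigenvalues are those of the form $\int_{\mathbb D}e^{-2\varepsilon}|\partial_xu|^2+e^{2\varepsilon}|\partial_yu|^2$. Test the eigenvalue sum with the first $n$ disk eigenfunctions; when $\lambda_n<\lambda_{n+1}$ their span is invariant under rotation by $\pi/2$. This gives $S_n(E_\varepsilon)\le\cosh(2\varepsilon)S_n(\mathbb D)$. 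Comparing with the second-order expansion of $E_\varepsilon$ ($a_{\pm2}=\tfrac12$) yields $d_2+d_{-2}\le 8S_n(\mathbb D)$.

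Now take $n$ with $\lambda_{n+1}(\mathbb D)=j_{0p}^2$ simple. The Bessel recurrences show that passing from $n$ to $n+1$ adds exactly $2\pi j_{0p}^2(j_{0p}^2-2)$ to $d_2+d_{-2}$. Hence
\[
d_2(n)\le 8S_{n+1}(\mathbb D)-2\pi j_{0p}^2(j_{0p}^2-2).
\]
P{\'o}lya's inequality on the disk, $4(n+1)\le j_{0p}^2$, gives $S_{n+1}(\mathbb D)\le \pi j_{01}^2+\frac{\pi}{4}j_{0p}^4-\pi j_{0p}^2$. The $j_{0p}^4$ terms cancel exactly, leaving $d_2(n)\le 4\pi(2j_{01}^2-j_{0p}^2)<0$ for every $p\ge2$. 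Each such $p$ gives a saddle case, so there are infinitely many.
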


Specifically, we show this by restricting attention to elliptical ($j=2$) perturbations for $n$ where $\lambda_{n+1}(\mathbb{D})$ is simple and showing that $d_2+d_{-2}<0$ for any $n$ in this family (see Proposition \ref{prop:saddle_before_simple_n}).

To decide whether the category of weak local minimizers is infinite or finite is a much harder problem than for the saddle point category, since not just one perturbation coefficient $d_j+d_{-j}$ has to be controlled, but all of them. We are not able to decide this question in this work, but numerical computations (see Appendix \ref{app:numerical_addendum}) speak in favor of the conjecture that the number of weak local minimizer cases is also infinite. This is remarkable since all comparable results mentioned previously \cite{Berger2014,Berger2015,Bogosel2018,Viator2018,Alland2026} found that the number of weak local minimizer cases is finite in their respective problems.

In three dimensions, we prove the following result analogous to Theorem \ref{thm:local_minimizer_disc}.

\begin{theorem}[Local optimality of the ball]\label{thm:local_minimizer_ball3d}
Let $n\in\mathbb{N}$.
\begin{enumerate}
\item If $\lambda_n(\mathbb{B})  = \lambda_{n+1}(\mathbb{B})$, then the ball is not critical and thus not a (weak) local minimizer of $S_n(\Omega)$.
\item If $\lambda_{n}(\mathbb{B}) < \lambda_{n+1}(\mathbb{B})$, then the ball is critical and it is a weak local minimizer of $S_n(\Omega)$ if
$$D_{L} > 0 \quad \text{ for all } L\geq 2,$$ 
where $D_{L}$ is given by
\begin{align} \label{eq:local_minimizer_ball_case2_DL}
\begin{split}
D_{L} &= |\mathbb{B}|^{2/3} \sum\limits_{j_{l+1/2,p}^2 \leq \lambda_n(\mathbb{B})} (2l+1) j_{l+1/2,p}^2 \Bigg(\frac{1}{ \pi}  + \frac{(2l+1)}{4\pi (2L+1)}  \big(\langle l, 0, l,0|L,0\rangle \big)^2 \\
&\hspace{2.8cm}+ 2\sum\limits_{l'\neq l} j_{l+1/2,p}\frac{j_{l'}'(j_{l+1/2,p})}{j_{l'}(j_{l+1/2,p})} \frac{(2l'+1)}{4\pi (2L+1)}  \big(\langle l, 0, l',0|L,0\rangle \big)^2  \Bigg).
\end{split}
\end{align}
If $D_{L}$ is negative for some $L\geq 2$, then the ball is a saddle point of $S_n(\Omega)$.
\end{enumerate}
\end{theorem}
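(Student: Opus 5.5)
The proof follows the two-dimensional argument for Theorem \ref{thm:local_minimizer_disc}, with Fourier modes replaced by spherical harmonics. We parametrize near-spherical domains as $\Omega_\varepsilon=\{(r,\theta,\phi): r<1+\varepsilon h(\theta,\phi)\}$ with $h=\sum_{L,M} a_{LM}Y_L^M$. Translations correspond to $L=1$, and $L=0$ is a dilation under which $S_n$ is invariant.

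The first step is a perturbation expansion for every eigenvalue cluster of $\mathbb{B}$. The cluster with eigenvalue $j_{l+1/2,p}^2$ has multiplicity $2l+1$ and eigenfunctions $u_{lpm}=c\, j_l(j_{l+1/2,p}r)Y_l^m$, where $j_l$ is the spherical Bessel function. We transform $\Omega_\varepsilon$ to the ball, or equivalently write the eigenfunction as a series in $r$ and impose the Dirichlet condition at $r=1+\varepsilon h$ order by order, as in Wolf--Keller. At first order, the cluster splits according to the eigenvalues of the matrix
\[
M_{mm'}=-2j_{l+1/2,p}^2\int_{S^2} h\,Y_l^m\overline{Y_l^{m'}}
\]
(Hadamard's formula). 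Its trace is $-2j^2\int h\sum_m|Y_l^m|^2=-2j^2\frac{2l+1}{4\pi}\int h$, by the addition theorem. Hence only $a_{00}$ contributes at first order, and this contribution is cancelled by the volume factor $|\Omega|^{2/3}$.

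This proves part (1). If $n$ cuts a cluster, we pick $h$ so that $M$ has distinct eigenvalues; $L=2$ suffices, e.g.\ $h=Y_2^0$, which produces nonzero differences whenever $l\ge1$. The partial sum of the lowest eigenvalues of the cluster then has a nonzero linear term, so $\mathbb{B}$ is not critical.

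For part (2), $S_n$ is a sum over complete clusters, so only traces enter and we need the trace of the second-order correction of each cluster. By degenerate perturbation theory, the second-order trace equals the trace of the second-order operator $\sum_m \lambda^{(2)}$. Coupling to other clusters arises through the boundary term: expanding $h\,Y_l^m=\sum_{l'}c_{l'}Y_{l'}$, each component $Y_{l'}$ must be compensated by the harmonic extension $j_{l'}(kr)Y_{l'}/j_{l'}(k)$. This produces the Dirichlet-to-Neumann factor $k\,j_{l'}'(k)/j_{l'}(k)$ with $k=j_{l+1/2,p}$. For $l'=l$ the extension is singular, and that component is instead absorbed into the degenerate subspace, giving the separate term without the Bessel ratio.

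The coefficients are Gaunt integrals. Summing over $m$ and $m'$, and using the Wigner--Eckart/Gaunt formula together with orthogonality of Clebsch--Gordan coefficients, gives
\[
\sum_{m,m'}\Big|\int Y_L^M Y_l^m \overline{Y_{l'}^{m'}}\Big|^2=\frac{(2l+1)(2l'+1)}{4\pi(2L+1)}\langle l0l'0|L0\rangle^2,
\]
independent of $M$. This independence is why the second variation is diagonal in $L$ and degenerate in $M$, with coefficient $\sum_M|a_{LM}|^2 D_L$. The remaining contributions are:
\begin{itemize}
\item the $\frac1\pi$ term, collecting the curvature/second-order boundary term $\int h^2|\partial_r u|^2$-type quantities together with the volume normalization terms;
\item the quadratic volume correction $|\Omega_\varepsilon|^{2/3}$, which contributes terms proportional to $\sum|a_{LM}|^2$;
\item terms from $a_{00}$, which must be set to enforce the constraint or, equivalently, cancel by scale invariance.
\end{itemize}
Then $S_n(\Omega_\varepsilon)=S_n(\mathbb{B})+\varepsilon^2\sum_{L\ge2}D_L\sum_M|a_{LM}|^2+O(\varepsilon^3)$, and the minimizer/saddle dichotomy follows directly: if some $D_L<0$, take $h=Y_L^0$.

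The main obstacle is the second-order degenerate computation. It requires justifying analyticity of the cluster trace, which holds because the total spectral projection of an isolated cluster is analytic even when the individual eigenvalues are not. It also requires bookkeeping of the $l'=l$ intra-cluster terms, which give the $\langle l0l0|L0\rangle^2$ term with coefficient $1$ rather than a Bessel ratio. I would handle both by computing $\operatorname{Tr}(PH^{(2)}P)$ via the Riesz projection on the pulled-back operator. I would then verify the constants against the already established two-dimensional formula \eqref{eq:local_minimizer_disc_case2_dj}, whose structure, including the $\delta_{j=2m}$ term, is the exact analogue.
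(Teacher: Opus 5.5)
Your architecture matches the paper's. You have the first-order Hermitian matrix: yours is $2k_0A^{T}$, where $A_{M,m}=-k_0\int_{S^2}h\,Y_l^m\overline{Y_l^M}$, so it has the same spectrum. Its trace vanishes once $a_{0,0}=0$. You use the test perturbation $Y_2^0$ for part (1), and you reduce the second-order cluster trace to Clebsch--Gordan orthogonality. Your Gaunt-sum identity is correct, and your remark on analyticity of the cluster trace via the total projection is a point the paper's formal expansion leaves implicit.

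Three small corrections outside the main gap:
\begin{itemize}
\item In part (1) you need the one-sided linear term to be strictly \emph{negative}, not merely nonzero. Since $A$ is traceless and nonzero, the sum of its $j+l+1<2l+1$ smallest eigenvalues is $<0$, and that is what rules out a minimizer.
\item Diagonality of the second variation in $(L,M)$ comes from the factor $\delta_{L_1,L_2}\delta_{M_1,M_2}$ in the orthogonality relation (or from rotation invariance). It does not follow from $M$-independence of the diagonal entries.
\item Discarding $L=1$ needs the check $D_1=0$, because a $Y_1^M$ perturbation is a translation only to first order. The paper verifies this with Bessel recurrences.
\end{itemize}

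The genuine gap is in part (2). The second-order cluster trace, which is exactly what $D_L$ encodes, is never computed. You assert the coefficient $\frac1\pi$, the factor $2$ on the Bessel ratios and the coefficient $1$ on $\langle l0l0|L0\rangle^2$, and checking them against the 2D formula is not a derivation. The difficulty you leave open is that each $k_2^{(\ell)}$ depends on the unknown eigenvectors $c^{(\ell)}$ of $A$. It also depends on the intra-cluster first-order coefficients $c'_{l,m}$, which the first-order equations do not determine.

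The paper handles this as follows. Take the $Y_l^M$-coefficient of the $\varepsilon^2$ boundary equation, multiply by $\overline{c^{(\ell)}_{l,M}}$, and sum over $M$ and over the whole orthonormal eigenbasis $\ell=-l,\dots,l$. The $c'_{l,m}$-terms then cancel exactly, because $Ac^{(\ell)}=k_1^{(\ell)}c^{(\ell)}$. Every remaining sum becomes a basis-independent trace, such as $\Tr A^2$ and $\Tr B^{l',m'}=\frac{2l+1}{\sqrt{4\pi}}\delta_{l',0}\delta_{m',0}$ with $B^{l',m'}_{M,m}=d_{l,m,l',m'}^{l,M}$. The result is $\sum_\ell k_2^{(\ell)}/k_0=\frac{2l+1}{2\pi}\sum_{L,M}|a_{L,M}|^2+S_2$. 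One $\frac{2l+1}{4\pi}$ comes from the volume normalization and one from the Taylor term $\frac12 j_l''(k_0)(k_0h)^2$, and the $(k_1^{(\ell)})^2$ pieces cancel.

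The $\langle l0l0|L0\rangle^2$ term enters only through $\lambda_2=2k_0k_2+k_1^2$, as $\Tr A^2/k_0^2$. The same conversion doubles $S_2$ and turns $\frac1{2\pi}$ into $\frac1\pi$. Your picture of the $l'=l$ component being ``absorbed into the degenerate subspace'' is a heuristic that does not fix this coefficient.

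Your fallback, computing $\Tr(PH^{(2)}P)$ via the Riesz projection, is incomplete as stated. Kato's second-order trace formula also contains $-\Tr(PH^{(1)}SH^{(1)}P)$, where $S$ is the reduced resolvent. On that route you would have to sum over all other ball eigenfunctions, including those with the same $l$ and $p'\neq p$. You would then have to resum via Mittag-Leffler expansions of $xj_{l'}'(x)/j_{l'}(x)$ to recover the Bessel ratios and the $l'=l$ term.
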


Here, the functions $j_l$ are spherical Bessel functions of the first kind and $\langle l, m, l',m'|L,M\rangle$ denote Clebsch-Gordan coefficients.

As in dimension two, case (2) of the theorem follows from a second order perturbation expansion of $S_n(\Omega)$ near the ball (see Proposition \ref{prop:prop3d_Sn_exp_quadratic}). Our expansion agrees in the case $n=1$ with the one derived by Goulian and Milner \cite{Goulian1996} for the principal eigenvalue $\lambda_1(\Omega)$, but our perturbation analysis for Dirichlet eigenvalues in the three dimensional case goes far beyond the principal eigenvalue.

Again, a finite check of coefficients $D_L$ is enough to categorize every $n$ (see Lemma \ref{lem:Dj_pos_2mmax}). The first few $n$ give: \vspace*{2mm}

\begin{center}
\begin{tabular}{l|l}
$\mathbb{B}$ is a    & $n$ (checked up to 350) \\ \hline
weak local     & $1, 4, 9, 10, 17, 29, 46, 66, 69, 93, 98, 99, 127, 134, 169, 219,$ \\
minimizer & $220, 243, 271, 278, 281, 306, 338,  ... $  \\ \hline
saddle    & $20, 34, 45, 53, 84, 116, 153, 156, 190, 199, 214, 254, 319, 347,$ \\
point     & $...$
\end{tabular}
\end{center}
\vspace*{2mm}

It is striking that the density of saddle point cases compared to weak local minimizer cases initially appears to be much lower in three dimensions than in two dimensions. However, numerics (see Appendix \ref{app:numerical_addendum}) suggest that saddle point cases dominate asymptotically over weak local minimizer cases. Coincidentally or not, the proof strategy that allows us to show that the saddle point category is infinite in two dimensions unfortunately breaks down in three dimensions. Ellipsoidal perturbations ($L=2$) of the ball asymptotically have positive second order variations and cannot be used to show that the ball is a saddle point of $S_n(\Omega)$ (see Proposition \ref{prop:D2toinfinity}). Understanding the sign behavior of $D_L$, $L\geq 3$, and thus the distribution of weak local minimizer and saddle point cases seems to require a rather deep understanding of the distribution of zeros of Bessel functions which is beyond the scope of this article. An analogue of Theorem \ref{thm:saddle_point_infinitely_many} for three dimensions thus remains an open problem. We conjecture that as in two dimensions both categories, weak local minimizers and saddle points, give infinite sequences.

\section{Second order perturbation expansions for the disk}

The goal of this section is to compute perturbation expansions up to second order for renormalized sums of eigenvalues $S_n(\Omega)$. Since $S_n(\Omega)$ is invariant under scaling of the domain, we focus on perturbation expansions of the unit disk.

\subsection{Two-dimensional perturbation framework} \label{subsec:twodim_framework}

We recall that the eigenvalues
$$0 < \lambda_1(\mathbb{D}) < \lambda_2(\mathbb{D}) \leq \lambda_3(\mathbb{D}) \leq ...$$
of the Dirichlet Laplacian on the unit disk are given by $\lambda_n(\mathbb{D}) = j_{mp}^2$ for some $m\in \mathbb{N}_0$, $p\in\mathbb{N}$, where $j_{mp}$ denotes the $p$-th positive zero of the $m$-th order Bessel function $J_m$. If $m=0$, the eigenvalues $\lambda_{n}(\mathbb{D}) = j_{0p}^2$ are simple and an associated eigenfunction is
\begin{align*}
u_{0p}(r,\varphi)=J_0(j_{0p}r).
\end{align*}
For $m>0$, every eigenvalue $\lambda_{n}(\mathbb{D}) = j_{mp}^2$ is a double eigenvalue. The corresponding eigenspace is then spanned by two eigenfunctions
\begin{align*}
u_{mp}^\pm(r,\varphi)= J_m(j_{mp}r) e^{\pm i m \varphi }. 
\end{align*}
We note that $J_m$ and $J_{m'}$ have no common positive zeros for $m\neq m'$ (Bourget's hypothesis, see \cite[\S 15.28]{Watson1995}, proved by Siegel \cite{Siegel1929}); the same fact guarantees that all denominators $J_j(j_{0p})$, $j\neq 0$, and $J_{m-j}(j_{mp})$, $j\neq 0,\, 2m$, appearing below and in Theorem \ref{thm:local_minimizer_disc} are non-zero.

In the following, we will consider near-circular domains that arise from perturbing the boundary of the unit disk. Let $\Omega$ be a star-shaped domain whose boundary is described by a curve 
$$\gamma: [0,2 \pi) \rightarrow \mathbb{R}^2, \quad \gamma(\varphi)=r(\varphi)\begin{pmatrix}
\cos(\varphi) \\ 
\sin(\varphi)
\end{pmatrix} $$
where $r(\varphi)$ is an arbitrary angle-dependent, positive radius. The unit disk is described by the radius function $r\equiv 1$. We now consider domains $\Omega_\varepsilon$ with a perturbed radius function of the form
$$r(\varphi,\varepsilon) = 1 + \sum\limits_{j \in\mathbb{Z}} (\varepsilon a_j + \varepsilon^2 b_j) e^{ij\varphi},$$
where we assume $a_j=\overline{a_{-j}}$ and $b_j=\overline{b_{-j}}$ so that the radius $r(\varphi,\varepsilon)$ is always real-valued. We also assume for simplicity that the number of non-zero coefficients $a_j$, $b_j$ is finite. This finiteness assumption may be replaced by suitable decay assumptions. Perturbations with only $a_{\pm1}\neq0$ correspond to first order to translations, under which $S_n$ is invariant. Local optimality is therefore understood modulo translations, cf.~the proof of Theorem \ref{thm:local_minimizer_disc}.

First, we notice that arbitrary perturbations will not preserve the area of $\Omega_\varepsilon$. The area of a domain with perturbed radius $r(\varphi, \varepsilon)$ is
\begin{align*}
|\Omega_\varepsilon|&= \frac{1}{2}\int_0^{2 \pi} r(\varphi,\varepsilon)^2\, d\varphi = \pi \bigg(1+2\varepsilon a_0+ \varepsilon^2 \bigg( 2b_0 + \sum\limits_{j \in\mathbb{Z}} |a_j|^2\bigg) + O(\varepsilon^3)\bigg).
\end{align*}
Since renormalized eigenvalues are invariant under domain scaling, we may assume
\begin{align}
 a_0 = 0, \qquad b_0 = - \frac{1}{2}\sum\limits_{j \in\mathbb{Z}} |a_j|^2   \label{eq:area_normalize_ajbj}
\end{align}
in the following, so that area is preserved up to second order in $\varepsilon$.

\subsection{Perturbation expansions for eigenfrequencies of the disk}

Let $\lambda_n(\Omega_\varepsilon) = k(\varepsilon)^2$ denote an eigenvalue of $\Omega_\varepsilon$. We now expand the perturbed eigenfrequency $k(\varepsilon)$ into 
$$k(\varepsilon) = k_0 + \varepsilon k_1 + \varepsilon^2 k_2 + O(\varepsilon^3) \qquad (\varepsilon \to 0).$$
Explicit expressions for $k_1 $ and $k_2$ are given by the next proposition.

\begin{proposition}[Wolf-Keller\cite{Wolf1994}] \label{prop:wk_lemma}
Let $\lambda_n(\mathbb{D}) = j_{mp}^2 $ and $k_0 = j_{mp}$. Then the first and second order perturbation expansion coefficients of $k(\varepsilon)=\sqrt{\lambda_n(\Omega_\varepsilon)}$ are given by the following expressions:
    \begin{enumerate}
        \item  If $m=0$, then
        \begin{align} \label{eq:k_1m0}
            \frac{k_1}{k_0} &= 0,\\ 
            \frac{k_2}{k_0} &= \sum\limits_{j\neq 0} \left(1 + k_0\frac{J_j'(k_0)}{J_j(k_0)}   \right) |a_{j}|^2.  \label{eq:k_2m0}
        \end{align}
        
        \item  If $m>0$, then
        
        \begin{align}  \label{eq:k_1mgtr0}
        \frac{k_1}{k_0} &= \begin{cases} 
        -|a_{2m}| , & \text{if } \lambda_n = \lambda_{n+1}, \qquad (I)\\
        +|a_{2m}|, &  \text{if } \lambda_n < \lambda_{n+1}, \qquad (II)
        \end{cases} \\
        \begin{split}
             \frac{k_2}{k_0} &= \bigg( \frac{1}{2} |a_{2m}|^2 + \sum\limits_{j \in\mathbb{Z}} |a_j|^2 + \sum\limits_{|j|\neq m} k_0\frac{J_j'(k_0)}{J_j(k_0)}  |a_{m-j}|^2 \bigg)  \\
&\qquad  + \frac{c_{-m}}{c_m} \bigg(-b_{2m}+ \frac{1}{2} \sum\limits_{j\in\mathbb{Z}} a_{m-j}a_{m+j} + \sum\limits_{|j|\neq m}   k_0\frac{J_j'(k_0)}{J_j(k_0)} a_{m+j}a_{m-j}\bigg)\\
&\qquad + \bigg(\frac{c_m'}{c_{-m}}a_{-2m} - \frac{c_{-m}'}{c_{m}}a_{2m} \bigg)
\end{split} \label{eq:k_2mgtr0}
        \end{align}
        where, if $a_{2m} \neq 0$,
        \begin{align*}
            c_m = \overline{c_{-m}} = \begin{cases}
                  e^{ i (\text{arg } a_{2m})/2} , & \text{if } \lambda_n = \lambda_{n+1}, \qquad (I)\\
                  e^{ i (\pi + \text{arg } a_{2m})/2}, &  \text{if } \lambda_n < \lambda_{n+1}. \qquad (II)
            \end{cases}
        \end{align*}
    \end{enumerate}
\end{proposition}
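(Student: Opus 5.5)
The plan is to derive the coefficients from a boundary perturbation (Rayleigh--Wolf--Keller) ansatz. We seek eigenfunctions of $\Omega_\varepsilon$ in polar coordinates:
\[
u_\varepsilon(r,\varphi)=\sum_{j\in\mathbb{Z}} c_j(\varepsilon)\,J_j(k(\varepsilon) r)\,e^{ij\varphi},
\qquad c_j(\varepsilon)=c_j+\varepsilon c_j'+\varepsilon^2 c_j''+\dots,
\]
with $k(\varepsilon)=k_0+\varepsilon k_1+\varepsilon^2k_2+\dots$. This is a formal Fourier--Bessel expansion, justified for finitely many nonzero $a_j,b_j$ by the analytic dependence of the eigenvalue branches on $\varepsilon$. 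At $\varepsilon=0$ only $c_{\pm m}$ are nonzero, or only $c_0$ if $m=0$.

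The Dirichlet condition is $u_\varepsilon(r(\varphi,\varepsilon),\varphi)=0$ for all $\varphi$. Writing $r=1+\varepsilon\rho_1+\varepsilon^2\rho_2$ with $\rho_1=\sum a_je^{ij\varphi}$ and $\rho_2=\sum b_je^{ij\varphi}$, we Taylor-expand $J_j(k(\varepsilon)r)$ around $k_0$. This gives
\[
J_j(k_0)+J_j'(k_0)\bigl(\varepsilon(k_1+k_0\rho_1)+\varepsilon^2(k_2+k_1\rho_1+k_0\rho_2)\bigr)+\tfrac12 J_j''(k_0)\,\varepsilon^2(k_1+k_0\rho_1)^2+O(\varepsilon^3).
\]
We then collect Fourier coefficients of $e^{il\varphi}$ at orders $\varepsilon$ and $\varepsilon^2$.

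At order $\varepsilon$ we obtain, for each mode $l$,
\[
J_l(k_0)\,c_l'+J_m'(k_0)\,k_0\bigl(c_m a_{l-m}+c_{-m}a_{l+m}\bigr)+k_1 J_m'(k_0)\bigl(c_m\delta_{l,m}+c_{-m}\delta_{l,-m}\bigr)=0.
\]
For $l\neq\pm m$ this determines $c_l'$, since $J_l(k_0)\neq0$ by Bourget's hypothesis. For $l=\pm m$ we have $J_{\pm m}(k_0)=0$, and since $a_0=0$ we are left with the $2\times2$ system
\[
k_1c_m+k_0a_{2m}c_{-m}=0,\qquad k_1c_{-m}+k_0a_{-2m}c_m=0.
\]
Its solvability requires $k_1^2=k_0^2|a_{2m}|^2$, so $k_1=\mp k_0|a_{2m}|$. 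The lower root belongs to branch (I), $\lambda_n=\lambda_{n+1}$, which corresponds to the lower eigenvalue of the split pair. The upper root belongs to branch (II). The eigenvector ratio $c_{-m}/c_m$ gives the stated phases of $c_{\pm m}$. For $m=0$ the single equation is $k_1 c_0=-k_0a_0c_0=0$.

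At order $\varepsilon^2$ we project onto $l=\pm m$, or $l=0$ when $m=0$. We substitute $c_l'=-k_0J_m'(k_0)(c_ma_{l-m}+c_{-m}a_{l+m})/J_l(k_0)$ and simplify the second-derivative terms with Bessel's equation at a zero,
\[
J_m''(k_0)=-J_m'(k_0)/k_0.
\]
After dividing by $J_m'(k_0)k_0$ and normalising $|c_m|=|c_{-m}|=1$, the $l=m$ equation gives $k_2/k_0$ as a sum of three groups of terms. The diagonal terms $\sum|a_j|^2+\frac12|a_{2m}|^2+\sum_{|j|\neq m}k_0\frac{J_j'}{J_j}|a_{m-j}|^2$ come from the $\rho_1^2$, $k_1\rho_1$ and $c'$ contributions. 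The cross terms, multiplied by $c_{-m}/c_m$, involve $-b_{2m}$ and the $a_{m\pm j}$ products. Finally, the $c'_{\pm m}$ contributions, which are left free by the degeneracy, appear as the bracket $\frac{c_m'}{c_{-m}}a_{-2m}-\frac{c'_{-m}}{c_m}a_{2m}$.

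Combining the $l=m$ equation with the conjugate $l=-m$ equation confirms that $k_2$ is real. For $m=0$, the term $b_0$ together with the normalization \eqref{eq:area_normalize_ajbj} produces exactly the formula $\sum_{j\ne0}(1+k_0J_j'/J_j)|a_j|^2$.

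I expect the main obstacle to be the order-$\varepsilon^2$ bookkeeping in the degenerate case. This step needs care with the index shifts $j\leftrightarrow m-j$ and with the cancellation that uses $J_m''=-J_m'/k_0$. It also requires correctly identifying the undetermined $c'_{\pm m}$ terms, which are fixed only by the $O(\varepsilon^2)$ solvability condition or by a normalisation choice; as in Wolf--Keller, we keep them in the formula. If $a_{2m}=0$, $k_1=0$ and one must instead diagonalise the second-order $2\times2$ matrix. The stated formula should then be read with $c_{\pm m}$ chosen as its eigenvectors, and I would handle that case separately by the same projection argument.
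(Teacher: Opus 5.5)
Your proposal is correct and follows the paper's proof essentially step by step. It uses the same Fourier--Bessel ansatz and Taylor expansion of the boundary condition. At order $\varepsilon$ it uses the same $2\times 2$ system in the $e^{\pm im\varphi}$ modes, giving $k_1=\mp k_0|a_{2m}|$ and the phases of $c_{\pm m}$. At order $\varepsilon^2$ it projects onto $e^{im\varphi}$, using $J_m''(k_0)=-J_m'(k_0)/k_0$, the formula for $c_l'$ ($|l|\neq m$), and the area normalization $b_0=-\frac{1}{2}\sum_j|a_j|^2$, while keeping the undetermined $c'_{\pm m}$ terms. Like the paper, you tacitly use the convention $J_{-n}=J_n$ when you write $J_m'(k_0)$ in front of both $c_{m}$ and $c_{-m}$.
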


The coefficients $c_{\pm m}$, $c_{\pm m}'$ are in general some complex coefficients from the expansion \eqref{eq:disk_Cn_expansion} whose exact values become irrelevant in the next section through cancellations.

\begin{proof}[Proof of Proposition \ref{prop:wk_lemma}]
Within this proof, we work with the convention that $J_{-n} = J_n$, instead of the canonical relation $J_{-n} = (-1)^n J_n$. This will simplify calculations, as we don't have to bother about the sign of $J_{\pm n}$. The end result does not depend on this sign convention. The calculations follow Wolf and Keller \cite{Wolf1994} and are included here for sake of completeness. 

By virtue of the Helmholtz equation in $\mathbb{R}^2$, any eigenfunction of the perturbed domain can be written in the form
$$u(r,\varphi,\varepsilon) = \sum\limits_{n \in\mathbb{Z}} C_n(\varepsilon) J_n(k(\varepsilon)r) e^{in\varphi}$$
where 
\begin{align}
  C_n(\varepsilon) = \delta_{|n|,m}c_n + \varepsilon c_n' + \varepsilon^2 c_n'' + O(\varepsilon^3)  \label{eq:disk_Cn_expansion}
\end{align}
are complex coefficients. We have already used here that $\lambda_n(\Omega_0) = \lambda_{n}(\mathbb{D}) = j_{mp}^2 $ for some $m\in\mathbb{N}_0$ and some $p\in\mathbb{N}$. The closed form for the eigenfunctions on the unit disk implies that only the two coefficients $c_m$ and $c_{-m}$ are then non-zero. We further assume $C_{-n}(\varepsilon)=\overline{C_{n}(\varepsilon)}$ for any $n \in \mathbb{Z}$ so that the eigenfunctions are always real-valued. 

To prove the proposition, we use that any eigenfunction on the perturbed domain must satisfy the boundary condition $u=0$ on $\partial \Omega_\varepsilon$ or equivalently, $u(r(\varphi, \varepsilon),\varphi,\varepsilon)=0 $ for all $\varphi \in [0,2\pi)$.
Since $k(\varepsilon)r(\varphi, \varepsilon) = k_0 + (k(\varepsilon)-k_0)+ k(\varepsilon)(r(\varphi,\varepsilon)-1) = k_0 + O(\varepsilon)$ as $\varepsilon\to 0$, we obtain by Taylor expansion of the Bessel functions around the point $k_0$
\begin{align} \label{eq:near_disc_boundary_condition}
\begin{split}
u(r(\varphi,\varepsilon),\varphi,\varepsilon) = \sum\limits_{n \in\mathbb{Z}} C_n(\varepsilon) \bigg[ &J_n(k_0) + J_n'(k_0)[(k(\varepsilon)-k_0)+ k(\varepsilon)(r(\varphi, \varepsilon)-1)] \\
&+ \frac{1}{2} J_n''(k_0)[(k(\varepsilon)-k_0)+ k(\varepsilon)(r(\varphi, \varepsilon)-1)]^2+...\bigg] e^{in\varphi}=0.
\end{split}
\end{align} 
The coefficients $k_0$, $k_1$ and $k_2$ in the expansion of $k$ can now be extracted from \eqref{eq:near_disc_boundary_condition} by setting all orders of $\varepsilon$ to zero. At zeroth order, equation \eqref{eq:near_disc_boundary_condition} just becomes  $k_0 = j_{mp}$ for some $m\in\mathbb{N}_0$ and some $p\in\mathbb{N}$, as expected. For the next orders of $\varepsilon$, we note that
\begin{align*}
(k(\varepsilon)-k_0)+ k(\varepsilon)(r(\varphi,\varepsilon)-1) &=  \varepsilon\bigg( k_1 + k_0 \sum\limits_{j\in \mathbb{Z}} a_j  e^{ij\varphi}\bigg) \\
& \quad+ \varepsilon^2 \bigg(k_2 + k_1 \sum\limits_{j\in \mathbb{Z}} a_j  e^{ij\varphi} +k_0  \sum\limits_{j \in \mathbb{Z}}  b_j e^{ij\varphi}\bigg)+O(\varepsilon^3)
\end{align*}
as $\varepsilon \to 0$. \\

\textbf{Order $\varepsilon$, case $m>0$:} Assuming that $m>0$, collecting all terms of order $\varepsilon$ from equation \eqref{eq:near_disc_boundary_condition} gives
\begin{align}
J_m'(k_0)(c_m e^{im\varphi} +c_{-m} e^{-im\varphi})\bigg( k_1 + k_0 \sum\limits_{j \in \mathbb{Z}} a_j  e^{ij\varphi}\bigg)+\sum\limits_{n \in \mathbb{Z}}  J_n(k_0) c_n' e^{i n\varphi} = 0. \label{eq:disk_firstorder_eps1}
\end{align}
By reindexing, equation \eqref{eq:disk_firstorder_eps1} can be rewritten into a single sum over complex exponentials with corresponding coefficients. Since complex exponentials are linearly independent, equation \eqref{eq:disk_firstorder_eps1} can only hold for all $\varphi \in [0,2\pi )$ if all coefficients are zero. Coefficients of $e^{in\varphi}$, $|n|\neq m$, yield
\begin{align*}
k_0 J_m'(k_0) \big(a_j c_m |_{j+m = n}+a_jc_{-m} |_{j-m = n} \big) + J_n(k_0)c_n' = 0
\end{align*}
and thus
\begin{align}\label{near_disc_cn1}
c_n' = - k_0 \frac{J_m'(k_0)}{J_n(k_0)} (a_{n-m} c_m +a_{n+m}c_{-m}).
\end{align}
Setting the coefficient of $e^{im\varphi}$ in \eqref{eq:disk_firstorder_eps1} to zero yields
\begin{align*}
 J_m'(k_0) \big( k_1  c_m +k_0 (a_j c_m |_{j+m = m}+a_jc_{-m} |_{j-m = m}) \big) =0,
\end{align*}
as $J_m(k_0)=J_m(j_{mp})=0$. Since $a_0 = 0$ and $J_m'(k_0) = J_m'(j_{mp}) \neq 0$ for any $m\in\mathbb{N}_0$, $p \in\mathbb{N}$, this implies
\begin{align}\label{near_disc_k1_1}
 k_1 = -k_0 a_{2m} \frac{c_{-m}}{c_m}.
\end{align}
Similarly, setting the coefficient of $e^{-im\varphi}$ in \eqref{eq:disk_firstorder_eps1} to zero gives
% \begin{align*}
%  J_m'(k_0) \big( k_1 c_{-m} +k_0 (a_j c_m |_{j+m = -m}+a_jc_{-m} |_{j-m = -m})\big) =0
% \end{align*}
% or
% \begin{align*}
%  k_1  c_{-m} +k_0 (a_{-2m} c_m +a_{0}c_{-m}) =0,
% \end{align*}
% thus
\begin{align}\label{near_disc_k1_2}
 k_1 = -k_0 a_{-2m} \frac{c_{m}}{c_{-m}}.
\end{align}
Since $a_j=\overline{a_{-j}}$ and $c_m=\overline{c_{-m}}$, we infer from \eqref{near_disc_k1_1} and \eqref{near_disc_k1_2} that $k_1 = \overline{k_1}$, so $k_1$ is in fact a real number, though the sign of $k_1$ has yet to be determined. Since eigenfunctions may be scaled by arbitrary complex constants, we may assume now that $|c_m| = |c_{-m}| = 1$. If $a_{2m} \neq 0$, only the two cases
$$c_m = e^{ i (\text{arg } a_{2m})/2} \quad \text{(I)}\qquad \qquad\text{ or }\qquad \qquad c_m = e^{ i (\pi + \text{arg } a_{2m})/2}   \quad \text{(II)}, $$
up to a simultaneous change of sign, are compatible with \eqref{near_disc_k1_1} and \eqref{near_disc_k1_2}. In case (I), we get $k_1^{\text{(I)}} = - k_0 |a_{2m}|$, while in case (II), we get $k_1^{\text{(II)}} = + k_0 |a_{2m}|$. Eigenfrequencies are thus linearly split according to the two cases (I) and (II). This shows \eqref{eq:k_1mgtr0}.\\

\textbf{Order $\varepsilon$, case $m=0$:} Applying the same reasoning as in the previous case, one obtains
\begin{align*}
c_n' = - k_0 \frac{J_0'(k_0)}{J_n(k_0)} c_0 a_{n}, \qquad n \neq 0,
\end{align*}
and $k_1 = 0$. This shows \eqref{eq:k_1m0}.\\

\textbf{Order $\varepsilon^2$, case $m>0$:} At order $\varepsilon^2$, equation \eqref{eq:near_disc_boundary_condition} leads for $m>0$ to 
\begin{align} \label{eq:disk_firstorder_eps2}
\begin{split}
&(c_m e^{im\varphi}+c_{-m} e^{-{i}m\varphi})\Bigg[ J_m'(k_0)\bigg(k_2+k_1\sum\limits_{j \in\mathbb{Z}} a_j  {e}^{{i}j\varphi}+k_0\sum\limits_{j\in\mathbb{Z}} b_j  {e}^{{i}j\varphi}\bigg)\\
&\qquad \qquad\qquad + \frac{1}{2}J_m''(k_0)\bigg( k_1^2+2k_1 k_0\sum\limits_{j\in\mathbb{Z}} a_j  {e}^{{i}j\varphi} +k_0^2 \sum\limits_{j,k \in\mathbb{Z}} a_j a_k  {e}^{{i}(j+k)\varphi}\bigg)\Bigg]\\
&\qquad\qquad \qquad+ \sum\limits_{n \in\mathbb{Z}} c_n'J_n'(k_0) \bigg( k_1 + k_0\sum\limits_{j \in\mathbb{Z}} a_j  {e}^{{i}j\varphi}  \bigg){e}^{{i}n\varphi} + \sum\limits_{n\in\mathbb{Z}} c_n''J_n(k_0){e}^{{i}n\varphi}=0
\end{split}
\end{align}
Again, this equation can be rearranged into a sum of complex exponentials with matching coefficients which all have to be zero. Setting the coefficient of ${e}^{{i}m\varphi}$ in \eqref{eq:disk_firstorder_eps2} to zero gives
\begin{align*}
& J_m'(k_0)(c_mk_2+k_1  c_m a_j|_{j+m=m}  +k_0  c_m b_j|_{j+m=m} +k_1 c_{-m}a_j|_{j-m=m}  +k_0 c_{-m} b_j|_{j-m=m})\\
&+ \frac{1}{2}J_m''(k_0)\Bigg[c_m \bigg( k_1^2+2k_1 k_0 a_j|_{j+m=m} +k_0^2 \sum\limits_{j\in\mathbb{Z}} a_ja_k |_{j+k+m = m}\bigg)\\
& \qquad\qquad \qquad+c_{-m} \bigg( 2k_1 k_0 a_j|_{j-m=m} +k_0^2 \sum\limits_{j\in\mathbb{Z}} a_ja_k |_{j+k-m = m}\bigg)\Bigg]\\
&+  c_m'J_m'(k_0) k_1 + \sum\limits_{n\in\mathbb{Z}} c_n' J_n'(k_0) k_0 a_j |_{j+n=m}=0
\end{align*}
which can be rearranged to
\begin{align*}
& c_m\bigg( J_m'(k_0) (k_2 + k_1 a_{0} + k_0 b_{0}) + \frac{1}{2}J_m''(k_0)\bigg( k_1^2 + 2k_0k_1 a_{0}+k_0^2 \sum\limits_{j \in \mathbb{Z}} a_ja_{-j}\bigg) \bigg)\\
&+c_{-m}\bigg( J_m'(k_0) (k_1 a_{2m} + k_0 b_{2m}) + \frac{1}{2}J_m''(k_0)\bigg( 2k_0k_1 a_{2m}+k_0^2 \sum\limits_{j \in\mathbb{Z}} a_{j}a_{2m-j}\bigg) \bigg)\\
&+  c_m'J_m'(k_0) k_1 + \sum\limits_{j \in\mathbb{Z}} c_j' J_j'(k_0) k_0 a_{m-j}=0.
\end{align*}
Now insert $k_1$ from equations \eqref{near_disc_k1_1} and \eqref{near_disc_k1_2}, $c_j'$ for $|j|\neq m$ from equation \eqref{near_disc_cn1} and the identity $J_m''(k_0) = -J_m'(k_0)/k_0$ for $k_0 = j_{mp}$ which follows from Bessel's equation. This yields
\begin{align*}
& c_m\bigg( J_m'(k_0) (k_2  -k_0 a_{2m}a_{0} \frac{c_{-m}}{c_m}  + k_0 b_{0})  -\frac{J_m'(k_0)}{2k_0}\bigg( k_0^2 |a_{2m}|^2 - 2k_0 ^2a_{0}a_{2m} \frac{c_{-m}}{c_m} +k_0^2 \sum\limits_{j \in\mathbb{Z}} |a_j|^2\bigg) \bigg)\\
&+c_{-m}\bigg( J_m'(k_0) (-k_0 |a_{2m}|^2 \frac{c_{m}}{c_{-m}}  + k_0 b_{2m})  -\frac{J_m'(k_0)}{2k_0}\bigg( -2k_0^2 |a_{2m}|^2 \frac{c_{m}}{c_{-m}} +k_0^2 \sum\limits_{j \in\mathbb{Z}} a_{m-j}a_{m+j}\bigg) \bigg)\\
&+  c_m'J_m'(k_0) \bigg( -k_0 a_{-2m} \frac{c_{m}}{c_{-m}} \bigg) +c_m' J_m'(k_0) k_0 a_{0}+c_{-m}' J_m'(k_0) k_0 a_{2m}\\
&+ \sum\limits_{|j|\neq m} \bigg( - k_0 \frac{J_m'(k_0)}{J_j(k_0)} (a_{j-m} c_m +a_{j+m}c_{-m})\bigg) J_j'(k_0) k_0 a_{m-j}=0.
\end{align*}
Multiplying out, canceling and dividing by $J_m'(k_0)$ gives

\begin{align*}
& c_m k_2 + c_m k_0 b_0 - \frac{1}{2}c_m k_0 |a_{2m}|^2-\frac{1}{2}c_{m}k_0 \sum\limits_{j\in\mathbb{Z}} |a_j|^2\\
& + c_{-m} k_0 b_{2m}  - \frac{1}{2}c_{-m}k_0\sum\limits_{j\in\mathbb{Z}} a_{m-j}a_{m+j}\\
& - c_m' k_0 a_{-2m} \frac{c_m}{c_{-m}}+c_m'k_0 a_0+ c_{-m}'k_0 a_{2m}\\
& + \sum\limits_{|j|\neq m}  - k_0^2\frac{J_j'(k_0)}{J_j(k_0)} ( c_m |a_{m-j}|^2 +c_{-m}a_{m+j}a_{m-j})=0.
\end{align*}
Rearranging the last equation and inserting \eqref{eq:area_normalize_ajbj} gives \eqref{eq:k_2mgtr0}.
%\begin{align}
% \frac{k_2}{k_0} =& \left(- b_0 + \frac{1}{2} |a_{2m}|^2 +\frac{1}{2} \sum\limits_{j=-\infty}^\infty |a_j|^2 + \sum\limits_{|j|\neq m} k_0\frac{J_j'(k_0)}{J_j(k_0)}  |a_{m-j}|^2 \right)  \\
%& + \frac{c_{-m}}{c_m} \left(-b_{2m}+ \frac{1}{2} \sum\limits_{j=-\infty}^\infty a_{m-j}a_{m+j} + \sum\limits_{|j|\neq m}   k_0\frac{J_j'(k_0)}{J_j(k_0)} a_{m+j}a_{m-j}\right)\\
%& + \left(\frac{c_m'}{c_{-m}}a_{-2m} - \frac{c_{-m}'}{c_{m}}a_{2m} \right) - c_m'a_0 
%\end{align}
% \begin{align} \label{eq:near_disc_k2}
% \begin{split}
%  \frac{k_2}{k_0} =& \left( \frac{1}{2} |a_{2m}|^2 + \sum\limits_{j=-\infty}^\infty |a_j|^2 + \sum\limits_{|j|\neq m} k_0\frac{J_j'(k_0)}{J_j(k_0)}  |a_{m-j}|^2 \right)  \\
% & + \frac{c_{-m}}{c_m} \left(-b_{2m}+ \frac{1}{2} \sum\limits_{j=-\infty}^\infty a_{m-j}a_{m+j} + \sum\limits_{|j|\neq m}   k_0\frac{J_j'(k_0)}{J_j(k_0)} a_{m+j}a_{m-j}\right)\\
% & + \left(\frac{c_m'}{c_{-m}}a_{-2m} - \frac{c_{-m}'}{c_{m}}a_{2m} \right)
% \end{split} 
% \end{align}
Again note that $k_2$ depends on $c_m$, so the distinction between (I) and (II) will come into play and give different results depending on the case. \\

\textbf{Order $\varepsilon^2$, case $m=0$:} If $m=0$, then an analogous calculation gives
\begin{align} \label{eq:near_disc_k2_m0_simple}
 \frac{k_2}{k_0} =\sum\limits_{j\neq 0} \left(1 + k_0\frac{J_j'(k_0)}{J_j(k_0)}   \right) |a_{j}|^2.
\end{align}

\end{proof}

\subsection{Perturbation expansions for eigenvalue sums of the disk}

Obtaining expansions for eigenvalue sums are now a matter of combining the expansions for individual eigenvalues. We have the following perturbation expansions.

\begin{proposition} \label{prop:prop_Sn_exp}
Let $\lambda_n(\mathbb{D})=j_{mp}^2$.
\begin{enumerate}
    \item If the $n$-th eigenvalue of the unit disk is the first instance of a double eigenvalue, i.e.~$\lambda_n(\mathbb{D})=\lambda_{n+1}(\mathbb{D})$, then
    \begin{align} \label{eq:prop_Sn_exp_case1}
        S_n(\Omega_\varepsilon) = S_n(\mathbb{D}) - \varepsilon \cdot 2 \pi j_{mp}^2 |a_{2m}|+O(\varepsilon^2) \qquad (\varepsilon\searrow 0).
    \end{align}
    \item If the $n$-th eigenvalue of the unit disk is a single eigenvalue or the second instance of a double eigenvalue, i.e.~$\lambda_{n}(\mathbb{D}) < \lambda_{n+1}(\mathbb{D}) $, then
    \begin{align} \label{eq:prop_Sn_exp_case2}
    \begin{split}
S_n(\Omega_\varepsilon) =  S_n(\mathbb{D}) &+ \varepsilon^2 \cdot \Bigg[ \sum\limits_{j_{0p}^2 \leq \lambda_n} \bigg( 2\pi j_{0p}^2 \sum\limits_{j\neq 0} \bigg(1 + j_{0p}\frac{J_j'(j_{0p})}{J_j(j_{0p})}   \bigg) |a_{j}|^2  \bigg)\\
&+\sum\limits_{\substack{j_{mp}^2 \leq \lambda_n, \\ m>0}} 4\pi j_{mp}^2 \bigg(  |a_{2m}|^2 + \sum\limits_{j\in\mathbb{Z}} |a_j|^2 + \sum\limits_{j\neq 0,2m} j_{mp}\frac{J_{m-j}'(j_{mp})}{J_{m-j}(j_{mp})}  |a_{j}|^2 \bigg)\Bigg] \\
&+ O(\varepsilon^3)  \qquad\qquad (\varepsilon\to 0).
\end{split}
\end{align}
\end{enumerate}
\end{proposition}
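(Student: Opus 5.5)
The plan is to write $S_n(\Omega_\varepsilon)=|\Omega_\varepsilon|\sum_{k=1}^n k_k(\varepsilon)^2$ with $k_k(\varepsilon)=\sqrt{\lambda_k(\Omega_\varepsilon)}$. By the normalization \eqref{eq:area_normalize_ajbj} we have $|\Omega_\varepsilon|=\pi+O(\varepsilon^3)$. Each eigenvalue then expands as $k^2=k_0^2+2\varepsilon k_0k_1+\varepsilon^2(k_1^2+2k_0k_2)+O(\varepsilon^3)$, with $k_1,k_2$ taken from Proposition \ref{prop:wk_lemma}. We group the first $n$ eigenvalues of $\mathbb{D}$ into clusters: simple ones $j_{0p}^2$, and pairs $j_{mp}^2$ with $m>0$. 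For a double eigenvalue, the two branches correspond to cases (I) and (II). Since $k_1^{(\mathrm{I})}=-k_0|a_{2m}|<k_1^{(\mathrm{II})}$, the lower perturbed eigenvalue, i.e.\ the one with index equal to the first index of the cluster, is branch (I). (If $a_{2m}=0$, the first-order terms vanish and ordering is irrelevant at this order.)

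\textbf{Case (1).} All clusters below $\lambda_n$ are complete. By the earlier footnote, and as checked below, their first-order contributions vanish. The last cluster contributes only branch (I), which gives $2\varepsilon k_0k_1=-2\varepsilon j_{mp}^2|a_{2m}|$. Multiplying by $\pi$ yields \eqref{eq:prop_Sn_exp_case1}; note this uses $\varepsilon\searrow0$, because the ordering of the branches depends on the sign of $\varepsilon$.

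\textbf{Case (2).} Every cluster is complete. For a simple eigenvalue, $k_1=0$, so its contribution is $2\pi k_0^2(k_2/k_0)$, and \eqref{eq:k_2m0} directly gives the first sum. For a pair, the first-order terms cancel, since $k_1^{(\mathrm{I})}+k_1^{(\mathrm{II})}=0$. The second-order part is $\pi\big[2k_0^2|a_{2m}|^2+2k_0^2(k_2^{(\mathrm{I})}+k_2^{(\mathrm{II})})/k_0\big]$.

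The key step is summing \eqref{eq:k_2mgtr0} over (I) and (II). The first bracket is identical in both cases, so it doubles. In the second bracket, the factor $c_{-m}/c_m=e^{-i\arg a_{2m}}$ in case (I) and $-e^{-i\arg a_{2m}}$ in case (II), so these terms cancel. The third bracket involves $c'_{\pm m}$, which are not determined by the first-order equations. Showing it cancels is the main obstacle. My approach is to extract $c'_{\pm m}$ from the second-order equations for the coefficients of $e^{\pm im\varphi}$, since only the combination appearing in $k_2$ is constrained. Alternatively, I would argue that the term vanishes by the normalization freedom: $c'_{\pm m}$ may be chosen so that $C_n(\varepsilon)$ stays normalized with fixed phase, i.e.\ $c'_m$ is proportional to $c_m$ with imaginary ratio. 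Then $\frac{c_m'}{c_{-m}}a_{-2m}-\frac{c'_{-m}}{c_m}a_{2m}$ becomes $2\,\mathrm{Re}$ or $2i\,\mathrm{Im}$ of a quantity whose reality is forced by $k_2\in\mathbb{R}$, and its sum over the two branches cancels by the same sign flip $c_m^{(\mathrm{II})}=i\,c_m^{(\mathrm{I})}$. Even if this term does not vanish branchwise, the cancellation only needs to hold in the sum, and I would verify this by that symmetry. This is the sense in which the paper remarks that the $c'$ values become irrelevant.

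After the cancellation, the pair contributes $2\pi k_0^2\big[|a_{2m}|^2+2\big(\tfrac12|a_{2m}|^2+\sum_j|a_j|^2+\sum_{|j|\ne m}k_0\tfrac{J_j'}{J_j}|a_{m-j}|^2\big)\big]$. This equals $4\pi k_0^2\big(|a_{2m}|^2+\sum_j|a_j|^2+\dots\big)$, the $|a_{2m}|^2$ terms combining as $1+1$. Reindexing $j\mapsto m-j$ turns the condition $|j|\neq m$ into $j\neq0,2m$. With the convention $J_{-n}=J_n$, the ratio $J'_{m-j}/J_{m-j}$ is insensitive to sign, which yields \eqref{eq:prop_Sn_exp_case2}. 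The $O(\varepsilon^3)$ remainders follow from the analytic dependence of the branch expansions, as assumed in Proposition \ref{prop:wk_lemma}.
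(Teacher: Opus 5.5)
Your overall architecture matches the paper's proof:
\begin{itemize}
\item split $S_n(\Omega_\varepsilon)$ into simple eigenvalues and (I)/(II) pairs;
\item let the first-order terms and the $\frac{c_{-m}}{c_m}$-bracket of \eqref{eq:k_2mgtr0} cancel within each pair;
\item reindex $j\mapsto m-j$.
\end{itemize}

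\textbf{Main gap.} It sits exactly at the step you call the main obstacle. You never establish that the third bracket $\frac{c_m'}{c_{-m}}a_{-2m}-\frac{c_{-m}'}{c_m}a_{2m}$ cancels in the sum over (I) and (II), and your sketch is not yet an argument. You leave open whether this bracket is ``$2\,\mathrm{Re}$ or $2i\,\mathrm{Im}$'' of something; if it were real, reality of $k_2$ would give you nothing. The relation $c_m^{(\mathrm{II})}=i\,c_m^{(\mathrm{I})}$ concerns only the zeroth-order coefficients and says nothing about how $c_{\pm m}'^{(\mathrm{I})}$ and $c_{\pm m}'^{(\mathrm{II})}$ are related. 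Nor are these at your disposal. Rescaling only adds a real multiple of $c_m$ to $c_m'$, while the component $it\,c_m$ is dictated by the order-$\varepsilon^2$ equations, so no phase can be ``fixed''.

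The missing observation, which the paper uses, is a reality argument:
\begin{itemize}
\item Under the conventions $c_{-m}'=\overline{c_m'}$, $c_{-m}=\overline{c_m}$, $a_{-2m}=\overline{a_{2m}}$, the third bracket equals $\overline{w}-w$ with $w=\frac{c_{-m}'}{c_m}a_{2m}$. It is therefore purely imaginary for every admissible $c'$, with no normalization needed.
\item $k_2^{(\mathrm{I})}+k_2^{(\mathrm{II})}$ is real, the first bracket is real, and the second brackets cancel.
\item Hence the sum of the two third brackets is both real and purely imaginary, so it is zero.
\end{itemize}

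Your route (a) also closes the gap if you actually carry it out. With $c_m'=(s+it)c_m$ the third bracket is $-2it\,k_1/k_0$, and the imaginary part of the $e^{im\varphi}$-equation at order $\varepsilon^2$ yields $t^{(\mathrm{I})}=t^{(\mathrm{II})}$ when $a_{2m}\neq 0$. That computation is the content of the step, and it is absent from your proposal.

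\textbf{Smaller omission.} In case (2) with $a_{2m}=0$, your cancellation of the second bracket relies on $c_{-m}/c_m=\pm e^{-i\arg a_{2m}}$, which is undefined there. Your parenthetical remark only addresses first order. You still need $c_{-m}^{(\mathrm{I})}/c_m^{(\mathrm{I})}=-c_{-m}^{(\mathrm{II})}/c_m^{(\mathrm{II})}$. The paper obtains it because the two branches' eigenfunctions can be taken orthogonal, so their limits are orthogonal. With $|c_m|=1$ this forces $c_m^{(\mathrm{II})}=\pm i\,c_m^{(\mathrm{I})}$.
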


\begin{proof}
Every renormalized eigenvalue satisfies
\begin{align} \label{eq:single_eig_val_expansion_2D}
|\Omega_\varepsilon|\lambda_k(\Omega_\varepsilon) &= \pi k_{0}^2 \left(1 + 2\varepsilon \frac{k_1}{k_0} + \varepsilon^2 \left(2 \frac{k_2}{k_0} + \frac{k_1^2}{k_0^2}\right) + O(\varepsilon^3) \right)
\end{align}
with corresponding coefficients $k_0$, $k_1$ and $k_2$ from Proposition \ref{prop:wk_lemma} depending on the index $k$. We now split $S_n(\Omega_\varepsilon)=\sum_{k=1}^n |\Omega_\varepsilon|\lambda_k(\Omega_\varepsilon)$ into sums over single and double eigenvalues, i.e.~
\begin{align} \label{eq:S_n2d_splitting}
    S_n(\Omega_\varepsilon) = \sum\limits_{\substack{\lambda_k(\mathbb{D})=j_{0p}^2} \leq \lambda_n(\mathbb{D}) } |\Omega_\varepsilon|\lambda_k(\Omega_\varepsilon) + \sum\limits_{\substack{\lambda_k(\mathbb{D})=j_{mp}^2 \leq \lambda_n(\mathbb{D}), \\ m>0}} |\Omega_\varepsilon|\lambda_k(\Omega_\varepsilon) .
\end{align}
By Proposition \ref{prop:wk_lemma},
\begin{align} \label{eq:Alambda_m0}
|\Omega_\varepsilon|\lambda_k(\Omega_\varepsilon) = \pi k_0^2 \bigg(1 + \varepsilon^2\cdot 2\sum\limits_{j\neq 0} \bigg(1 + k_0\frac{J_j'(k_0)}{J_j(k_0)}   \bigg) |a_{j}|^2 + O(\varepsilon^3) \bigg),
\end{align}
thus, the first sum in \eqref{eq:S_n2d_splitting} has no first order contributions. For the second sum in \eqref{eq:S_n2d_splitting}, we always group the two instances of a double eigenvalue in pairs and then sum the perturbation expansions of these pairs. For indices $k$ with $\lambda_{k-1}(\mathbb{D})=\lambda_{k}(\mathbb{D}) \leq \lambda_n(\mathbb{D})$, we can identify $\lambda_{k-1}(\mathbb{D})$ as a case (I) eigenvalue and $\lambda_{k}(\mathbb{D})$ as a case (II) eigenvalue in Proposition \ref{prop:wk_lemma}. Both first order perturbations $k_1$ from \eqref{eq:k_1mgtr0} cancel when summed. This leads to
\begin{align*}
|\Omega_\varepsilon|\left(\lambda_{k-1}(\Omega_\varepsilon)+\lambda_{k}(\Omega_\varepsilon)\right) &= 2 \pi k_0^2 \left(1 + \varepsilon^2 \left( \frac{k_2^{\text{(I)}}}{k_0} +\frac{k_2^{\text{(II)}}}{k_0} + |a_{2m}|^2\right) + O(\varepsilon^3) \right).
\end{align*}
Since
\begin{align} \label{eq:disk_cmI-II_relation}
\frac{c_{-m}^{\text{(I)}}}{c_{m}^{\text{(I)}}} = - \frac{c_{-m}^{\text{(II)}}}{c_{m}^{\text{(II)}}},
\end{align} 
we can see that the entire second bracket in \eqref{eq:k_2mgtr0} with prefactor $c_{-m}^{\text{(I)}}/c_{m}^{\text{(I)}} $ in case (I) exactly cancels the second bracket in \eqref{eq:k_2mgtr0} with prefactor $c_{-m}^{\text{(II)}}/c_{m}^{\text{(II)}}$ in case (II)\footnote{If $a_{2m}=0$, then the phase of $c_m$ is not fixed by \eqref{near_disc_k1_1} and \eqref{near_disc_k1_2}. Relation \eqref{eq:disk_cmI-II_relation} nevertheless holds: eigenfunctions of $\lambda_{k-1}(\Omega_\varepsilon)$ and $\lambda_k(\Omega_\varepsilon)$ can be chosen orthogonal for every $\varepsilon$, hence so are their limits as $\varepsilon\to 0$. With $|c_m^{\text{(I)}}|=|c_m^{\text{(II)}}|=1$, this forces $c_m^{\text{(II)}}=\pm i\, c_m^{\text{(I)}}$, which is \eqref{eq:disk_cmI-II_relation}.}. Furthermore, the third bracket in \eqref{eq:k_2mgtr0} is
\begin{align*}
\bigg(\frac{c_m'}{c_{-m}}a_{-2m} - \frac{c_{-m}'}{c_{m}}a_{2m} \bigg),
\end{align*}
which is always purely imaginary because $a_{2m}=\overline{a_{-2m}}$, $c_m=\overline{c_{-m}}$ and $c_m'=\overline{c_{-m}'}$. On the other hand the first bracket in \eqref{eq:k_2mgtr0} is always purely real, so in order to keep $k_2^{\text{(I)}}/k_0 +k_2^{\text{(II)}}/k_0 $ real-valued, we must have cancellation of all imaginary terms, i.e.
\begin{align*} 
\bigg(\frac{c_m'{}^{\text{(I)}}}{c_{-m}^{\text{(I)}}}a_{-2m} - \frac{c_{-m}'{}^{\text{(I)}}}{c_{m}^{\text{(I)}}}a_{2m} \bigg) + \bigg(\frac{c_m'{}^{\text{(II)}}}{c_{-m}^{\text{(II)}}}a_{-2m} - \frac{c_{-m}'{}^{\text{(II)}}}{c_{m}^{\text{(II)}}}a_{2m} \bigg) = 0.
\end{align*}
Therefore,
\begin{align*} 
 \frac{k_2^{\text{(I)}}}{k_0} +\frac{k_2^{\text{(II)}}}{k_0} =& |a_{2m}|^2 + 2\sum\limits_{j \in\mathbb{Z}} |a_j|^2 + 2\sum\limits_{|j|\neq m} k_0\frac{J_j'(k_0)}{J_j(k_0)}  |a_{m-j}|^2 
\end{align*}
and 
\begin{align}\label{eq:Alambd_sum_m_greater_0}
\begin{split}
&|\Omega_\varepsilon|\left(\lambda_{k-1}(\Omega_\varepsilon)+\lambda_{k}(\Omega_\varepsilon)\right) \\%=\; & 2 \pi k_0^2 \bigg(1 + \varepsilon^2 \cdot 2 \bigg( |a_{2m}|^2 + \sum\limits_{j\in\mathbb{Z}} |a_j|^2 + \sum\limits_{|j|\neq m} k_0\frac{J_j'(k_0)}{J_j(k_0)}  |a_{m-j}|^2\bigg) + O(\varepsilon^3) \bigg)\\
 =\; &2 \pi k_0^2 \bigg(1 + \varepsilon^2 \cdot 2 \bigg( |a_{2m}|^2 + \sum\limits_{j \in \mathbb{Z}} |a_j|^2 + \sum\limits_{j\neq 0,2m} k_0\frac{J_{m-j}'(k_0)}{J_{m-j}(k_0)}  |a_{j}|^2\bigg) + O(\varepsilon^3) \bigg).
\end{split}
\end{align}
Now consider the case that $\lambda_{n}(\mathbb{D})$ is a single eigenvalue or the second instance of a double eigenvalue. Inserting \eqref{eq:Alambda_m0} for all single eigenvalues and \eqref{eq:Alambd_sum_m_greater_0} for all pairs of double eigenvalue instances into \eqref{eq:S_n2d_splitting} we obtain \eqref{eq:prop_Sn_exp_case2}. In the other case where $\lambda_{n}(\mathbb{D})$ is the first instance of a double eigenvalue, $S_{n-1}(\Omega_\varepsilon)$ falls under the previous case and Proposition \ref{prop:wk_lemma} together with \eqref{eq:single_eig_val_expansion_2D} gives $|\Omega_\varepsilon|\lambda_n(\Omega_\varepsilon)=\pi \lambda_n(\mathbb{D}) - \varepsilon \cdot 2\pi j_{mp}^2  |a_{2m}| + O(\varepsilon^2)$ as $\varepsilon \searrow 0$. Thus,
\begin{align*}
    S_n(\Omega_\varepsilon) &=  S_{n-1 }(\Omega_\varepsilon) + |\Omega_\varepsilon|\lambda_n(\Omega_\varepsilon) \\
    &=S_{n-1 }(\mathbb{D}) + O(\varepsilon^2) + \pi \lambda_n(\mathbb{D}) - \varepsilon \cdot 2\pi j_{mp}^2  |a_{2m}| + O(\varepsilon^2) \\
    &=S_{n }(\mathbb{D})  - \varepsilon \cdot 2\pi j_{mp}^2  |a_{2m}| + O(\varepsilon^2) \qquad (\varepsilon \searrow 0).
\end{align*}   
\end{proof}

Theorem \ref{thm:local_minimizer_disc} now readily follows from the previous proposition.

\begin{proof}[Proof of Theorem \ref{thm:local_minimizer_disc}]
Case (1) of the theorem follows immediately from case (1) of Proposition \ref{prop:prop_Sn_exp} which shows that the disk is not a local minimizer: by choosing a perturbation of the disk with $a_{2m} \neq 0$ one can deform the disk to a domain $\Omega_\varepsilon$ with $S_n(\Omega_\varepsilon) < S_n(\mathbb{D})$ for small enough $\varepsilon$. 

In case (2), we can rephrase \eqref{eq:prop_Sn_exp_case2} as a weighted sum of the perturbation coefficients $|a_j|^2$. Since $a_0 = 0$ by assumption, we can write
\begin{align*}
S_n(\Omega_\varepsilon) =  S_n(\mathbb{D}) + \varepsilon^2 \cdot  \sum\limits_{j \neq 0} d_j |a_j|^2 + O(\varepsilon^3) \qquad (\varepsilon\to 0)
\end{align*}
with coefficients $d_j$ as stated in \eqref{eq:local_minimizer_disc_case2_dj}. Note that $|a_j|^2 = |a_{-j}|^2$ and $S_n(\Omega_\varepsilon)$ is invariant under translations of the domain. Therefore, we arrive at
\begin{align*}
S_n(\Omega_\varepsilon) =  S_n(\mathbb{D}) + \varepsilon^2 \cdot  \sum\limits_{j \geq 1} (d_j+d_{-j}) |a_j|^2 + O(\varepsilon^3) \qquad (\varepsilon\to 0).
\end{align*}
We claim that $d_1 + d_{-1} = 0$ for any $n$. This is because $j=1$ corresponds to infinitesimal translations of the disk. Indeed, by definition,
    \begin{align*} 
d_1 + d_{-1} &=   \sum\limits_{j_{0p}^2 \leq \lambda_n(\mathbb{D})} 2\pi j_{0p}^2 \left(2 + 2j_{0p}\frac{J_1'(j_{0p})}{J_1(j_{0p})}   \right)  \\
& \qquad+ \sum\limits_{\substack{j_{mp}^2 \leq \lambda_n(\mathbb{D}), \\ m>0}} 4\pi j_{mp}^2 \left( 2 +  j_{mp}\frac{J_{m-1}'(j_{mp})}{J_{m-1}(j_{mp})} +j_{mp}\frac{J_{m+1}'(j_{mp})}{J_{m+1}(j_{mp})}  \right).
\end{align*}
From the recurrence relations $J_\nu'(x) = J_{\nu-1}(x) - \frac{\nu}{x}J_\nu(x) = \frac{\nu}{x}J_\nu(x) - J_{\nu+1}(x)$ for $\nu \in \mathbb{R}$, one can show that
\begin{align*}
    j_{0p}\frac{J_1'(j_{0p})}{J_1(j_{0p})}  &= -1 , \\
    j_{mp}\frac{J_{m-1}'(j_{mp})}{J_{m-1}(j_{mp})} &= m-1, \\
    j_{mp}\frac{J_{m+1}'(j_{mp})}{J_{m+1}(j_{mp})} &= - (m+1),
\end{align*}
from which the claim follows.

Hence, the disk is a weak local minimizer of $S_n(\Omega)$ if all second order perturbation coefficients $d_j+d_{-j}$, $j\geq 2$, are positive. If $d_{j}+d_{-j}$ is negative for some $j\geq 2$, then the disk is not a local minimizer but a saddle point, because perturbing the circular boundary with $a_{j}\neq 0$ can decrease $S_n(\Omega)$ quadratically for small enough $\varepsilon$. 
\end{proof}

The next lemma shows that one cannot decrease $S_n(\Omega)$ with highly oscillating boundary perturbations and supplies this with a sufficient quantitative bound to their frequency.

\begin{lemma} \label{lem:dj_pos_2mmax}
Let $n\in\mathbb{N}$ and $m_{max}(n) := \max\lbrace m : j_{m1}^2 \leq \lambda_n(\mathbb{D})\rbrace $. Then
\begin{align*}
    d_j+d_{-j}>0  
\end{align*}
for all  $j\geq \big(1+\sqrt{4/3} \big) m_{max}(n)+5$.
\end{lemma}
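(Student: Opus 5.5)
The plan is to show that, for $j$ this large, every summand contributing to $d_j+d_{-j}$ in \eqref{eq:local_minimizer_disc_case2_dj} is individually positive. Writing out $d_j+d_{-j}$ term by term, the $m=0$ eigenvalues contribute $2\pi j_{0p}^2\bigl(2+j_{0p}J_j'(j_{0p})/J_j(j_{0p})+j_{0p}J_{-j}'(j_{0p})/J_{-j}(j_{0p})\bigr)$. The $m>0$ eigenvalues contribute $4\pi j_{mp}^2\bigl(2+\text{log-derivative terms of orders } m-j \text{ and } m+j\bigr)$. Since $j\ge 2m_{max}+5>2m$, the exceptional case $j=2m$ does not occur. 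Moreover, $|J_{-\nu}|=|J_\nu|$ for integer $\nu$, so each log-derivative is $xJ_\nu'(x)/J_\nu(x)$ with $\nu=|m\mp j|$ and $x=j_{mp}$. It therefore suffices to show that $xJ_\nu'(x)/J_\nu(x)>0$ whenever $\nu\ge j-m_{max}$ and $x=j_{mp}\le\sqrt{\lambda_n(\mathbb{D})}$.

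\textbf{Step 1 (positivity below the first zero).} For $0<x<j_{\nu,1}$ we have $J_\nu>0$. I will use the standard lower bound for the logarithmic derivative in that range, derived from the recurrence $xJ_\nu'=\nu J_\nu-xJ_{\nu+1}$ (already used in the proof of Theorem \ref{thm:local_minimizer_disc}) together with a Riccati/continued-fraction estimate for $J_{\nu+1}/J_\nu$. This gives $xJ_\nu'(x)/J_\nu(x)\ge \sqrt{\nu^2-x^2}\cdot(\text{positive})$, or at least $>0$, for $0<x<\nu$. The cleanest form is: $\nu J_\nu - xJ_{\nu+1}>0$ for $0<x\le\nu$, which follows from $J_{\nu+1}(x)/J_\nu(x)<x/(\nu+1+\sqrt{(\nu+1)^2-x^2})\le x/\nu$. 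Hence every term has the form $2+(\text{positive})>0$, provided $\nu> \sqrt{\lambda_n(\mathbb{D})}$.

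\textbf{Step 2 (bounding $\sqrt{\lambda_n}$ by $m_{max}$).} By the definition of $m_{max}$, we have $j_{m_{max}+1,1}^2>\lambda_n(\mathbb{D})$, so $\sqrt{\lambda_n(\mathbb{D})}<j_{m_{max}+1,1}$. The smallest order that occurs is $\nu=j-m\ge j-m_{max}$ (the orders $j+m$ and $j$ are larger). So the requirement becomes
\[
j\ge m_{max}+j_{m_{max}+1,1}.
\]
I will then invoke an explicit linear upper bound on first Bessel zeros of the form $j_{\mu,1}\le \sqrt{4/3}\,\mu+c$ with $c$ small, uniform for all $\mu\ge 0$. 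Candidate sources are elementary bounds such as $j_{\mu,1}<\sqrt{(\mu+1)(\mu+5)}$ or $j_{\mu,1}^2<(\mu+1)(\mu+2)\cdot\tfrac{4}{3}$-type Rayleigh-sum bounds; the latter presumably explains the constant $\sqrt{4/3}$. With $\mu=m_{max}+1$, this yields $j_{m_{max}+1,1}\le\sqrt{4/3}\,m_{max}+4$, after absorbing constants, and the threshold $(1+\sqrt{4/3})m_{max}+5$ follows.

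\textbf{Main obstacle.} The delicate point is Step 2: selecting and verifying a concrete zero bound whose constants match $\sqrt{4/3}$ and $+5$ exactly, including the small-$m_{max}$ cases (e.g.\ $m_{max}=0$, where one needs $j\ge5>j_{1,1}\approx3.83$). I would first try the Rayleigh-type bound $j_{\mu,1}^2\le \tfrac43(\mu+1)(\mu+2)$, which follows from the sum rule $\sum_p j_{\mu p}^{-2}=1/(4(\mu+1))$ combined with the second-moment sum rule. Taking square roots, $\sqrt{(\mu+1)(\mu+2)}\le\mu+3/2$, so $j_{\mu,1}\le\sqrt{4/3}(\mu+3/2)$. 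For $\mu=m_{max}+1$ this gives $j_{m_{max}+1,1}\le \sqrt{4/3}\,m_{max}+\tfrac52\sqrt{4/3}<\sqrt{4/3}\,m_{max}+5$, which is exactly what is needed. A secondary check is to ensure Step 1 is stated with strict inequality at $x<\nu$, so that no term vanishes.
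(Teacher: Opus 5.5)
Your architecture matches the paper's. You make every summand of $d_j+d_{-j}$ positive by showing $xJ_\nu'(x)/J_\nu(x)>0$ for $x=j_{mp}<j_{m_{max}+1,1}$ and $\nu\ge j-m_{max}$, then turn this into the explicit threshold via an upper bound on $j_{m_{max}+1,1}$.

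Step 1 is correct. The ratio bound you quote is true for $0<x\le\nu$, which is all you need, though you only cite it. It is also heavier than necessary: the paper just uses that $J_\nu$ increases on $(0,j'_{\nu,1})$ with $j'_{\nu,1}>\nu$. Equivalently, $(xJ_\nu')'=(\nu^2/x-x)J_\nu>0$ on $(0,\nu)$ and $xJ_\nu'(x)\to 0$ as $x\to 0^+$.

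The gap is in Step 2, exactly where you anticipated it.

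\begin{itemize}
\item The bound you plan to use, $j_{\mu,1}^2\le\frac43(\mu+1)(\mu+2)$, is false for small and moderate $\mu$. At $\mu=0$ the right side is $8/3$ while $j_{0,1}^2\approx 5.78$. At $\mu=1$ it is $8$ versus $j_{1,1}^2\approx 14.68$. At $\mu=20$ it is $616$ versus $j_{20,1}^2\approx 646$.
\item Your other candidate, $j_{\mu,1}<\sqrt{(\mu+1)(\mu+5)}$, also fails already at $\mu=0$, since $\sqrt5\approx 2.24<j_{0,1}\approx 2.405$.
\item The Rayleigh sum-rule argument cannot produce the factor $\frac43$. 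From $\sigma_1=\sum_p j_{\mu p}^{-2}=\frac{1}{4(\mu+1)}$, $\sigma_2=\sum_p j_{\mu p}^{-4}=\frac{1}{16(\mu+1)^2(\mu+2)}$ and $\sigma_2<j_{\mu,1}^{-2}\sigma_1$, you only get $j_{\mu,1}^2<4(\mu+1)(\mu+2)$, i.e.\ $j_{\mu,1}<2\mu+3$. That gives the weaker threshold $j\ge 3m_{max}+5$, not $(1+\sqrt{4/3})\,m_{max}+5$.
\end{itemize}

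The missing ingredient is Watson's bound $j_{\mu,1}<\sqrt{\frac43(\mu+1)(\mu+5)}$ (Watson, pp.~485--487), which is what the paper uses. With $\mu=m_{max}+1$ it gives $j_{m_{max}+1,1}<\sqrt{\frac43(m_{max}+2)(m_{max}+6)}<\sqrt{4/3}\,(m_{max}+4)<\sqrt{4/3}\,m_{max}+5\le j-m_{max}$. The rest of your argument then goes through unchanged.
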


This means that in order to numerically check the optimality condition (2) from Theorem \ref{thm:local_minimizer_disc}, it is sufficient to compute perturbation coefficients with $j < \big(1+\sqrt{4/3} \big) m_{max}(n)+5$, which is a finite computation. Furthermore, one can show that $m_{max}(n) = O(\sqrt{n})$ as $n \to \infty$. The number of second order perturbation coefficients needed to decide whether the disk is a local minimizer of $S_n(\Omega)$ is thus relatively small compared to $n$ and in fact computationally quite inexpensive.

\begin{proof}[Proof of Lemma \ref{lem:dj_pos_2mmax}]
It is sufficient to show that $J_{m \pm j}'(j_{mp})/J_{m \pm j}(j_{mp})>0$ for all $m\in \mathbb{N}_0$, $p\in\mathbb{N}$ such that $j_{mp}^2 \leq \lambda_n(\mathbb{D})$ and $j\geq \big(1+\sqrt{4/3} \big) m_{max}(n)+5$. For this purpose, recall two bounds for zeros of Bessel functions and their derivatives. By \cite[pp. 485-487]{Watson1995}, we have
\begin{align} \label{eq:besseljzero_m1_ineq}
j_{m1} < \sqrt{\frac{4}{3}(m+1)(m+5)},
\end{align}
and, denoting the zeros of $J_m'$ by $j_{m p}'$,
\begin{align} \label{eq:besseljzero_derivative_n1_ineq}
j_{m 1}' > m.
\end{align}
Furthermore, every $J_m$, $m\neq 0$, is positive and strictly increasing up to the first zero of its derivative $J_m'$, i.e. $J_m'(x)/J_m(x)>0$ for $0<x<j_{m 1}' $. So let $m\in \mathbb{N}_0$, $p\in\mathbb{N}$ such that $j_{mp}^2 \leq \lambda_n(\mathbb{D})$ and $j\geq \big(1+\sqrt{4/3} \big) m_{max}(n)+5$. Then,
\begin{align*}
    j_{mp} \leq j_{m_{max}(n)+1,1} &< \sqrt{\frac{4}{3}(m_{max}(n)+2)(m_{max}(n)+6)} \\
    &<\sqrt{\frac{4}{3}} (m_{max}(n)+4) \\
    &< j - m_{max}(n) \\
    &\leq j \pm m \\
    &< j_{j \pm m, 1}'
\end{align*}
and hence $J_{m \pm j}'(j_{mp})/J_{m \pm j}(j_{mp})>0$ as $m\pm j \neq 0$.
\end{proof}

\section{Second order perturbation expansions for the ball}

Parallel to the analysis in two dimensions, we derive a second order perturbation expansion for the eigenfrequencies $k = \sqrt{\lambda}$ for the Dirichlet Laplacian and then square and sum the results.

\subsection{Three-dimensional perturbation framework} \label{subsec:threedim_framework}

The eigenvalues of the Dirichlet Laplacian on the unit ball are given by $\lambda_{n}(\mathbb{B}) = j_{l+1/2,p}^2$ for some $l\in\mathbb{N}_0$, $p\in\mathbb{N}$. If $l=0$, the eigenvalues $\lambda_{n}(\mathbb{B}) = j_{1/2,p}^2$ are simple, while if $l>0$ the eigenvalues $\lambda_{n}(\mathbb{B}) = j_{l+1/2,p}^2$ are $(2l+1)$-fold degenerate. The eigenspace of $\lambda_{n}(\mathbb{B}) = j_{l+1/2,p}^2$ is spanned by $(2l+1)$ eigenfunctions of the form
\begin{align*}
u_{lm}(r,\theta,\varphi)= j_l(j_{l+1/2,p}r) Y_{l}^{m}(\theta, \varphi), \qquad \qquad m \in\mathbb{Z}, \; -l\leq m \leq l,
\end{align*}
where 
$$j_l(x)= \sqrt{\frac{\pi}{2x}} J_{l+1/2}(x)$$
are spherical Bessel functions and $Y_{l}^{m}$ are spherical harmonics. We remark that since $j_l(x)=\sqrt{\pi/(2x)}\,J_{l+1/2}(x)$, the fact that $j_l$ and $j_{l'}$, $l\neq l'$, have no common positive zeros is again due to Siegel's theorem \cite{Siegel1929}. This guarantees that all denominators in Theorem \ref{thm:local_minimizer_ball3d} are non-zero.

We consider near-spherical domains that arise from perturbing the boundary of the unit ball. Let $\Omega$ be a star-shaped domain whose boundary is the surface
$$s: [0,\pi] \times [0,2 \pi) \rightarrow \mathbb{R}^3, \quad s(\theta,\varphi)=r(\theta,\varphi)\begin{pmatrix} \sin \theta \cos \varphi \\  \sin \theta \sin \varphi \\  \cos \theta \end{pmatrix} $$
where $r(\theta,\varphi)$ is an arbitrary angle-dependent radius. The unit ball in $\mathbb{R}^3$ is described by the radius function $r \equiv 1$. We consider domains $\Omega_\varepsilon$ with a perturbed radius function of the form
\begin{align*}
r(\theta,\varphi, \varepsilon) = 1 + \sum\limits_{l,m} (\varepsilon a_{l,m} + \varepsilon^2 b_{l,m}) Y_{l}^{m}(\theta, \varphi)
\end{align*}
where $Y_{l}^{m}=(-1)^m \overline{Y_{l}^{-m}}$ are the usual spherical harmonics and we assume $a_{l,-m}=(-1)^m \overline{a_{l,m}}$ and $b_{l,-m}=(-1)^m \overline{b_{l,m}}$ so that the radius $r$ is real-valued. We may assume again for simplicity that the number of non-zero coefficients $a_{l,m}$, $b_{l,m}$ is finite. Perturbations with only $a_{1,M}\neq 0$ correspond to first order to translations, under which $S_n$ is invariant. We understand local optimality again modulo translations, cf.~the proof of Theorem \ref{thm:local_minimizer_ball3d}. Here and in the following, summations over indices of spherical harmonics are implicit over $l\in \mathbb{N}_0$, $-l \leq m \leq l$. 

The volume of a perturbed domain with radius $r(\theta,\varphi, \varepsilon)$ is
\begin{align*}
|\Omega_\varepsilon|&= \frac{1}{3}\int_0^{\pi} \int_0^{2 \pi} r(\theta,\varphi,\varepsilon)^3 \sin \theta \, d\varphi \, d\theta \\
% &= \frac{1}{3}\int_0^{\pi} \int_0^{2 \pi} \Bigg( 1 + 3\sum\limits_{l=0}^{\infty} \sum\limits_{m=-l}^{l} (\varepsilon a_{l,m} + \varepsilon^2 b_{l,m}) Y_{l}^{m}(\theta, \varphi) \\
% & \quad + 3 \sum\limits_{l=0}^{\infty} \sum\limits_{m=-l}^{l} \sum\limits_{l'=0}^{\infty} \sum\limits_{m'=-l'}^{l'} (\varepsilon a_{l,m} + \varepsilon^2 b_{l,m}) (\varepsilon a_{l',m'} + \varepsilon^2 b_{l',m'})Y_{l}^{m}(\theta, \varphi) Y_{l'}^{m'}(\theta, \varphi) +O(\varepsilon^3)\Bigg)\sin \theta \, d\varphi \, d\theta \\
&=\frac{4\pi}{3}  + \sum\limits_{l,m} (\varepsilon a_{l,m} + \varepsilon^2 b_{l,m})\int_0^{\pi} \int_0^{2 \pi} Y_{l}^{m}(\theta, \varphi)\sin \theta \, d\varphi \, d\theta\\
& \quad +  \varepsilon^2\sum\limits_{l,m}  \sum\limits_{l',m'}   a_{l,m}  a_{l',m'} (-1)^{m'}\int_0^{\pi} \int_0^{2 \pi} Y_{l}^{m}(\theta, \varphi) \overline{Y_{l'}^{-m'}(\theta, \varphi)} \sin \theta\, d\varphi \, d\theta +O(\varepsilon^3)
\end{align*}
By orthonormality of the spherical harmonics, we have
\begin{align*}
\int_0^{\pi} \int_0^{2 \pi} Y_{l}^{m}(\theta, \varphi) \overline{Y_{l'}^{m'}(\theta, \varphi)} \sin \theta \, d\varphi \, d\theta = \delta_{l,l'} \delta_{m,m'}
\end{align*}
and because $Y_{0}^{0}(\theta, \varphi) \equiv \sqrt{1/(4\pi)}$, we also have
\begin{align*}
\int_0^{\pi} \int_0^{2 \pi} Y_{l}^{m}(\theta, \varphi)\sin \theta \, d\varphi \, d\theta =\sqrt{4\pi} \delta_{l,0} \delta_{m,0}.
\end{align*}
Therefore,
\begin{align*}
|\Omega_\varepsilon|=\frac{4\pi}{3} + \varepsilon \,\sqrt{4\pi} a_{0,0} + \varepsilon^2 \bigg( \sqrt{4\pi} b_{0,0} +\sum\limits_{l,m}  |a_{l,m}|^2 \bigg) +O(\varepsilon^3).
\end{align*}
Due to scaling invariance of renormalized eigenvalues, we may choose
\begin{align} \label{eq:a00_b00}
  a_{0,0} = 0, \qquad b_{0,0} = - \sqrt{\frac{1}{4\pi}}\sum\limits_{l,m}   |a_{l,m}|^2   
\end{align}
in the following to guarantee a constant volume up to second order in $\varepsilon$. Finally note that if $|\Omega_\varepsilon|=|\mathbb{B}|+O(\varepsilon^3)$, then also  $|\Omega_\varepsilon|^{2/3}=|\mathbb{B}|^{2/3}+O(\varepsilon^3)$.

\subsection{Products of spherical harmonics and Clebsch-Gordan coefficients}

In what follows, we need to use several properties of Clebsch-Gordan coefficients. This is because they appear when one expresses products of spherical harmonics in the basis of spherical harmonics. The representation given by the next lemma is a tool used repeatedly for the calculation of perturbation expansions of eigenfrequencies in the next section. 

\begin{lemma}[{\cite[Section 5.6.2]{Varshalovich1988}}] \label{lem:sphericalharmonics_exp}
It holds
\begin{align*}
Y_{l}^{m}(\theta, \varphi)Y_{l'}^{m'}(\theta, \varphi) = \sum\limits_{L, M} d_{l,m,l',m'}^{L,M} Y_{L}^{M}(\theta, \varphi)
\end{align*}
where 
\begin{align} \label{eq:dlmlmLM_definition}
d_{l,m,l',m'}^{L,M} = \sqrt{\frac{(2l+1)(2l'+1)}{4\pi(2L+1)}} \langle l 0 l' 0|L 0\rangle \langle l m l' m'|L M\rangle
\end{align}
and $\langle l m l' m'|L M\rangle$ are Clebsch-Gordan coefficients. 
\end{lemma}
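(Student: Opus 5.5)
The plan is to derive the product formula from the coupling of angular momenta, since the statement is a classical identity (the Gaunt expansion) quoted from Varshalovich. Because the spherical harmonics $\{Y_L^M\}$ form an orthonormal basis of $L^2(S^2)$, the product $Y_l^m Y_{l'}^{m'}$ (a smooth function on the sphere) expands as $\sum_{L,M} d^{L,M}_{l,m,l',m'} Y_L^M$ with
\begin{align*}
d^{L,M}_{l,m,l',m'} = \int_{S^2} Y_l^m Y_{l'}^{m'} \overline{Y_L^M}\, d\sigma .
\end{align*}
It therefore suffices to evaluate this triple integral. Moreover, $Y_l^mY_{l'}^{m'}$ restricts a homogeneous polynomial of degree $l+l'$, so only $L\le l+l'$ occurs. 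The $\varphi$-integration forces $M=m+m'$, and parity under $x\mapsto -x$ forces $l+l'+L$ to be even.

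To evaluate the integral, I will use the Wigner--Eckart structure. Under rotations, the product $Y_l^m Y_{l'}^{m'}$ transforms in the tensor product $V_l\otimes V_{l'}$, whose decomposition into irreducibles $V_L$ is governed by the Clebsch--Gordan coefficients. Hence
\begin{align*}
\sum_{m,m'}\langle l m l' m'|L M\rangle\, Y_l^mY_{l'}^{m'}
\end{align*}
transforms like an element of $V_L$ of weight $M$. Being a function on $S^2$, it must equal $c_{l l' L}\, Y_L^M$ for a constant $c_{ll'L}$ independent of $M$, by Schur's lemma, since $V_L$ occurs in $L^2(S^2)$ with multiplicity one. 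Inverting with the orthogonality of Clebsch--Gordan coefficients gives
\begin{align*}
Y_l^mY_{l'}^{m'}=\sum_{L}c_{ll'L}\langle l m l' m'|L M\rangle Y_L^{M}.
\end{align*}

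It remains to identify $c_{ll'L}$, and this is where the only real computation lies. I will set $m=m'=0$ and evaluate at the north pole $\theta=0$, where
\begin{align*}
Y_l^m(0,\varphi)=\delta_{m0}\sqrt{(2l+1)/(4\pi)}.
\end{align*}
The left side is then $\sqrt{(2l+1)(2l'+1)}/(4\pi)$, and only $M=0$ terms survive on the right. This gives one identity in $\sum_L c_{ll'L}\langle l0l'0|L0\rangle\sqrt{(2L+1)/(4\pi)}$, which alone does not separate the different $L$.

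To separate them, I will instead use the addition theorem / Legendre product $P_lP_{l'}=\sum_L (2L+1)\begin{pmatrix} l&l'&L\\0&0&0\end{pmatrix}^2P_L$, equivalently $P_lP_{l'}=\sum_L\langle l0l'0|L0\rangle^2P_L$, which is the classical Adams--Neumann formula. Since $Y_l^0=\sqrt{(2l+1)/(4\pi)}P_l(\cos\theta)$, comparing coefficients of $P_L$ yields
\begin{align*}
c_{ll'L}=\sqrt{\frac{(2l+1)(2l'+1)}{4\pi(2L+1)}}\,\langle l0l'0|L0\rangle .
\end{align*}
This is exactly \eqref{eq:dlmlmLM_definition}. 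The main obstacle is justifying the Legendre product coefficients; I would simply cite them from \cite{Varshalovich1988}, or obtain them from the Wigner $D$-matrix product formula $D^l D^{l'}=\sum \langle\cdot\rangle\langle\cdot\rangle D^L$ by specializing to $m=m'=0$ columns. The phase conventions, in particular $Y_l^{-m}=(-1)^m\overline{Y_l^m}$, should be checked to match the standard Condon--Shortley choice.
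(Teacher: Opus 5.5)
The paper gives no proof of this lemma; it simply quotes the expansion from \cite[Section 5.6.2]{Varshalovich1988}. Your derivation is the standard representation-theoretic one, and it is sound.

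Equivariance of $Y_l^m\otimes Y_{l'}^{m'}\mapsto Y_l^mY_{l'}^{m'}$, together with Schur's lemma and multiplicity one, gives $\sum_{m,m'}\langle l m l' m'|L M\rangle Y_l^mY_{l'}^{m'}=c_{ll'L}Y_L^M$. Here the Condon--Shortley convention (matching the paper's $Y_l^m=(-1)^m\overline{Y_l^{-m}}$) is what guarantees that the $Y_L^M$ transform by the same matrices $D^L$ as the basis defining the Clebsch--Gordan coefficients. Completeness of the real orthogonal Clebsch--Gordan matrix then yields $d^{L,M}_{l,m,l',m'}=c_{ll'L}\langle l m l' m'|L M\rangle$. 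What this buys over the citation is a self-contained explanation of why the coefficients factor this way.

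Your normalization step, however, is heavier than necessary. The Adams--Neumann formula $P_lP_{l'}=\sum_L\langle l0l'0|L0\rangle^2P_L$ is exactly the $m=m'=0$ case of the lemma, so citing it amounts to citing part of the statement. Reading off $c_{ll'L}$ from it also requires dividing by $\langle l0l'0|L0\rangle$. You would therefore additionally need your parity argument when $l+l'+L$ is odd, and the (true but unstated) non-vanishing of $\langle l0l'0|L0\rangle$ within the triangle range when $l+l'+L$ is even.

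Your north-pole idea removes all of this if you apply it to the coupled identity with $M=0$ rather than to the uncoupled product. At $\theta=0$ only the term $m=m'=0$ survives on the left, so
\[
\langle l0l'0|L0\rangle\,\frac{\sqrt{(2l+1)(2l'+1)}}{4\pi}=c_{ll'L}\sqrt{\frac{2L+1}{4\pi}}.
\]
This is precisely \eqref{eq:dlmlmLM_definition}, obtained with no Legendre product formula, no division, and no case distinction.
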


Clebsch-Gordan coefficients satisfy the following properties.

\begin{lemma}[{\cite[Chapter 8]{Varshalovich1988}}] \label{lem:cg_coeff} \,

% The Clebsch-Gordan coefficients are given by
% \begin{align*}
%  \langle l m l' m'|L M\rangle &= \delta_{m+m',M} \sqrt{\frac{(2L+1)(L+l-l')!(L-l+l')!(l+l'-L)!}{(l+l'+L+1)!}}\\
%  & \qquad \cdot \sqrt{(L-M)!(L+M)!(l-m)!(l+m)!(l'-m')!(l'+m')!}\\
%  & \qquad \cdot \sum\limits_{k} \frac{(-1)^k}{k! (l+l'-L-k)!(l-m-k)!(l'+m'-k)!(L-l'+m+k)!(L-l-m'+k)!}
% \end{align*}
% where the sum goes over all integers $k$ such that the argument of every factorial is non-negative. 
\begin{enumerate}
\item All Clebsch-Gordan coefficients $\langle l m l' m'|L M\rangle$ are real.
\item Selection rules: 
\begin{align*}
 \langle l m l' m'|L M\rangle = 0 \qquad \text{if } m+m' \neq M \text{ or } L<|l-l'| \text{ or } L>l+l' 
\end{align*}
\item Symmetry under change of sign:
\begin{align*}
 \langle l m l' m'|L M\rangle = (-1)^{l+l'-L}\langle l, -m ,l', -m'|L, -M\rangle
\end{align*}
\item Permutation symmetry (1):
\begin{align*}
 \langle l m l' m'|L M\rangle = (-1)^{l+l'-L}\langle l' m' l m|L M\rangle
\end{align*}
\item Permutation symmetry (2):
\begin{align*}
 \langle l m l' m'|L M\rangle &= (-1)^{l-m}\sqrt{\frac{2L+1}{2l'+1}}\langle l, m, L,-M |l',-m'\rangle\\
&= (-1)^{l'+m'}\sqrt{\frac{2L+1}{2l+1}}\langle L, -M, l',m' |l,-m\rangle\\
&= (-1)^{l-m}\sqrt{\frac{2L+1}{2l'+1}}\langle L, M, l,-m |l',m'\rangle\\
&= (-1)^{l'+m'}\sqrt{\frac{2L+1}{2l+1}}\langle l', -m', L,M |l,m\rangle
\end{align*}
\item Special case ($l'=m'=0$):
\begin{align*}
 \langle l, m, 0,0|L,M\rangle  = \delta_{l,L} \delta_{m,M}
\end{align*}
\item Summation identity:
\begin{align*}
 \sum\limits_{m=-l}^{l }\langle l, m, l', 0|l, m\rangle = (2l+1)\delta_{l',0} 
\end{align*}
\item Orthogonality:
\begin{align*}
\sum_{m=-l}^l \sum_{m'=-l'}^{l'} \langle l, m, l',m'|L_1,M_1\rangle \langle l, m, l',m'|L_2,M_2\rangle  = \delta_{L_1,L_2} \delta_{M_1,M_2}
\end{align*}
\end{enumerate}
\end{lemma}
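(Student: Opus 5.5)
These eight properties are not proved in the paper; they are standard facts cited from \cite[Chapter 8]{Varshalovich1988}. If a self-contained verification were wanted, I would take as the definition of $\langle l m l' m'|L M\rangle$ the coefficients of the unitary change of basis
\[
|LM\rangle=\sum_{m,m'}\langle l m l' m'|L M\rangle\, |lm\rangle\otimes|l'm'\rangle,
\]
from the uncoupled to the coupled basis of $V_l\otimes V_{l'}$, with the Condon--Shortley phase convention. I would then derive each item from this definition, together with the explicit Racah formula where convenient.

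First, reality (1) comes out of the construction. One builds $|L,L\rangle$ as the kernel of $J_+$ inside the weight-$L$ subspace, fixes its phase so that $\langle l,l,l',L-l|L,L\rangle>0$, and then applies $J_-=J_-^{(1)}+J_-^{(2)}$ repeatedly. All the matrix elements of $J_\pm$ are real, so every coefficient obtained this way is real.

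Next come the selection rules (2) and orthogonality (8). The condition $m+m'=M$ follows because $J_z$ is additive on the tensor product. The triangle condition $|l-l'|\le L\le l+l'$ is the Clebsch--Gordan decomposition $V_l\otimes V_{l'}\cong\bigoplus_{L=|l-l'|}^{l+l'}V_L$. Orthogonality is exactly the statement that the change of basis is unitary (indeed real orthogonal), read in the form "columns are orthonormal". The special case (6) is immediate, because $V_l\otimes V_0\cong V_l$ with $|l m\rangle\otimes|00\rangle=|lm\rangle$.

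For the symmetries (3)--(5) I would use the Racah closed formula. Under $m\mapsto -m$, $m'\mapsto -m'$, $M\mapsto -M$, or under swapping $(l,m)\leftrightarrow(l',m')$, the summation index can be substituted so that the sum is mapped to itself up to the global sign $(-1)^{l+l'-L}$. A cleaner route is to pass to Wigner $3j$ symbols, using
\[
\langle l m l' m'|L M\rangle=(-1)^{l-l'+M}\sqrt{2L+1}\begin{pmatrix} l & l' & L\\ m & m' & -M\end{pmatrix}.
\]
There, (4) and (5) reduce to the statement that the $3j$ symbol is invariant under even permutations of its columns and acquires $(-1)^{l+l'+L}$ under odd ones. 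Property (3) reduces to the sign-flip rule for $3j$ symbols. The main bookkeeping obstacle is tracking these phase factors, and I would check each of the four lines of (5) by composing a column permutation with the conversion formula.

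Finally, for the summation identity (7) I would apply (5) to rewrite
\[
\langle l,m,l',0|l,m\rangle=(-1)^{l-m}\sqrt{\tfrac{2l+1}{2l'+1}}\,\langle l,m,l,-m|l',0\rangle,
\]
and note that $(-1)^{l-m}/\sqrt{2l+1}=\langle l,m,l,-m|0,0\rangle$. The sum over $m$ is then proportional to $\sum_m\langle l m l,-m|l'0\rangle\langle l m l,-m|00\rangle=\delta_{l',0}$, by orthogonality (8). Keeping track of the prefactors gives $(2l+1)\delta_{l',0}$.
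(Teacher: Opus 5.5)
You correctly note that the paper gives no proof beyond the citation to Varshalovich, so your sketch is a self-contained alternative to that citation, and it is sound. Property (1) follows from the highest-weight construction with real $J_\pm$ matrix elements. Properties (2) and (8) follow from the real orthogonal change of basis realizing $V_l\otimes V_{l'}\cong\bigoplus_L V_L$, and (6) from $V_l\otimes V_0\cong V_l$. For (3)--(5), the Racah sum works: the substitution $k\mapsto l+l'-L-k$ produces exactly $(-1)^{l+l'-L}$ both under sign reversal and under swapping the two angular momenta. Property (7) follows from (5), (8) and $\langle l,m,l,-m|0,0\rangle=(-1)^{l-m}/\sqrt{2l+1}$, with the prefactors combining to $(2l+1)\delta_{l',0}$. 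Compared with the bare citation, your route makes explicit that (1) and (3)--(6) rest on the Condon--Shortley phase convention. This is the same convention under which Lemma \ref{lem:sphericalharmonics_exp} holds, and the paper leaves it implicit.

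Two corrections. First, the third and fourth lines of (5) involve reversed projections, so composing a column permutation of the $3j$ symbol with the conversion formula does not suffice. You also need the sign-reversal rule, or you can derive those lines from the first two via (3) and (4).

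Second, and more substantively, unitarity gives (8) only for admissible labels, $|l-l'|\le L_i\le l+l'$ and $|M_i|\le L_i$, and this restriction is genuinely needed. If $L_1=L_2$ lies outside the triangle range, every term on the left vanishes by (2), while the right side equals $1$; the standard form of the orthogonality relation carries a triangle condition. This is an imprecision in the statement rather than in your method. It is harmless for the paper, because in Corollary \ref{cor:sphericalharmonics_exp_coeff_properties}(8) the factor $\langle l,0,l',0|L_1,0\rangle^2$ vanishes outside the triangle, and the labels $(L,M)$ that occur always index spherical harmonics. Your use of (8) inside the proof of (7) is unaffected: when $l'>2l$ one has $L_1=l'\neq 0=L_2$, and both sides are zero.
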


These immediately imply corresponding properties and rules for the coefficients $d_{l,m,l',m'}^{L,M}$ of the expansion of a product of two spherical harmonics.

\begin{corollary} \label{cor:sphericalharmonics_exp_coeff_properties}
Let $d_{l,m,l',m'}^{L,M}$ be defined by \eqref{eq:dlmlmLM_definition}. Then:
\begin{enumerate}
\item All coefficients $d_{l,m,l',m'}^{L,M}$ are real.
\item Selection rules: 
\begin{align*}
 d_{l,m,l',m'}^{L,M} = 0 \qquad \text{if } m+m' \neq M \text{ or } L<|l-l'| \text{ or } L>l+l'
\end{align*}
\item Symmetry under change of sign:
\begin{align*}
d_{l,m,l',m'}^{L,M} = d_{l,-m,l',-m'}^{L,-M}
\end{align*}
\item Permutation symmetry (1):
\begin{align*}
d_{l,m,l',m'}^{L,M} = d_{l',m',l,m}^{L,M}
\end{align*}
\item Permutation symmetry (2):
\begin{align*}
d_{l,m,l',m'}^{L,M} &= (-1)^{-m}d_{l,m,L,-M}^{l',-m'}\\
&= (-1)^{m'}d_{ L, -M, l',m'}^{l,-m}\\
&= (-1)^{-m}d_{ L, M, l,-m }^{l',m'}\\
&= (-1)^{m'}d_{ l', -m', L,M }^{l,m}
\end{align*}
\item Special case ($l'=m'=0$):
$$d_{l,m,0,0}^{L,M} = \sqrt{\frac{1}{4 \pi}} \delta_{l,L}  \delta_{m,M}$$
\item Summation identity:
\begin{align*}
 \sum\limits_{m=-l}^{l }d_{l, m, l', 0}^{l, m} = \sqrt{\frac{1}{4 \pi}}(2l+1)\delta_{l',0} 
\end{align*}
\item Orthogonality:
\begin{align*}
\sum_{m=-l}^l \sum_{m'=-l'}^{l'} d_{ l, m, l',m'}^{L_1,M_1}  d_{ l, m, l',m'}^{L_2,M_2} = \frac{(2l+1)(2l'+1)}{4\pi (2L_1+1)}  \left(\langle l, 0, l',0|L_1,0\rangle \right)^2 \delta_{L_1,L_2} \delta_{M_1,M_2} 
\end{align*}
\end{enumerate}
\end{corollary}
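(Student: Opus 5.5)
Each item of this corollary follows by substituting the definition \eqref{eq:dlmlmLM_definition},
$$d_{l,m,l',m'}^{L,M} = \sqrt{\tfrac{(2l+1)(2l'+1)}{4\pi(2L+1)}}\,\langle l0l'0|L0\rangle\langle lml'm'|LM\rangle,$$
into the corresponding item of Lemma \ref{lem:cg_coeff}. The one recurring subtlety is the sign factors. For these I will use the parity rule for $\langle l0l'0|L0\rangle$: applying Lemma \ref{lem:cg_coeff}(3) with $m=m'=M=0$ gives $\langle l0l'0|L0\rangle=(-1)^{l+l'-L}\langle l0l'0|L0\rangle$. Hence the factor vanishes unless $l+l'+L$ is even, and whenever $d\neq 0$ every factor $(-1)^{l+l'-L}$ (and its permutations) equals $1$.

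Items (1), (2) and (6) are direct. Reality follows from (1) of the lemma, and the selection rules carry over because both Clebsch--Gordan factors obey them. For (6), set $l'=m'=0$, so that the prefactor is $\sqrt{(2l+1)/(4\pi(2L+1))}$. Using $\langle l0 00|L0\rangle=\delta_{lL}$ and $\langle lm00|LM\rangle=\delta_{lL}\delta_{mM}$, the product is $\sqrt{1/(4\pi)}\,\delta_{lL}\delta_{mM}$.

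For (3) and (4), the two Clebsch--Gordan factors each produce a sign $(-1)^{l+l'-L}$ under the sign flip or the swap. In (4) the signs simply multiply to $1$. In (3) only the $m$-dependent factor changes, so one sign remains, and it is killed by the parity rule. The prefactor is symmetric in $l,l'$, which completes (4).

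For (5), take the first identity as an example. Apply Lemma \ref{lem:cg_coeff}(5) to $\langle lml'm'|LM\rangle$ and, with all $m$'s set to zero, to $\langle l0l'0|L0\rangle$. This produces $\sqrt{(2L+1)/(2l'+1)}$ twice, so the product is $(2L+1)/(2l'+1)$. Combined with the original prefactor this gives $\sqrt{(2l+1)(2L+1)/(4\pi(2l'+1))}$, which is exactly the prefactor of $d^{l',-m'}_{l,m,L,-M}$. The signs are $(-1)^{l-m}(-1)^{l}=(-1)^{-m}$, using $(-1)^{2l}=1$. The remaining three identities are handled the same way. In the ones carrying $(-1)^{m'}$, the sign is $(-1)^{l'+m'}(-1)^{l'}$.

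For (7), with $L=l$ the prefactor is $\sqrt{(2l'+1)/(4\pi)}$. Since $\langle l0l'0|l0\rangle$ does not depend on $m$, it can be pulled out of the sum. Lemma \ref{lem:cg_coeff}(7) then forces $l'=0$, where both the prefactor and this factor equal $1$ (by item (6) of the lemma), giving $\sqrt{1/(4\pi)}(2l+1)\delta_{l',0}$.

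For (8), first assume $L_1=L_2$. Then the prefactor squared and the squared factor $\langle l0l'0|L_10\rangle^2$ can be pulled out, and Lemma \ref{lem:cg_coeff}(8) gives the claim. If $L_1\neq L_2$, the prefactors differ, but Lemma \ref{lem:cg_coeff}(8) already makes the sum zero, so the result is $0$ as stated.

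The only step needing real care is the sign bookkeeping in (3) and (5). There the parity vanishing of $\langle l0l'0|L0\rangle$ is essential, so I would state that rule first and then invoke it throughout.
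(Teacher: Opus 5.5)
Your proposal is correct and takes the paper's route: the paper simply asserts that the items follow immediately from Lemma~\ref{lem:cg_coeff} by inserting the definition \eqref{eq:dlmlmLM_definition}, and you supply the omitted bookkeeping, including the parity argument the paper itself records as Corollary~\ref{cor:d_coeff_lsum_odd}. One small overstatement: in items (4) and (5) the signs cancel through $(-1)^{2(l+l'-L)}=(-1)^{2l}=(-1)^{2l'}=1$ alone, so the parity vanishing of $\langle l0l'0|L0\rangle$ is genuinely needed only for item (3).
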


Another property can be inferred from the symmetry under change of sign.

\begin{corollary} \label{cor:d_coeff_lsum_odd}
If $l+l'-L$ is odd, then
\begin{align*}
 d_{l,m,l',m'}^{L,M} = 0 .
\end{align*}
\end{corollary}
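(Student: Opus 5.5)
The plan is to derive the vanishing from the sign-change symmetry of Lemma \ref{lem:cg_coeff}(3) together with the definition \eqref{eq:dlmlmLM_definition}. The key observation is that $d_{l,m,l',m'}^{L,M}$ contains the factor $\langle l 0 l' 0|L 0\rangle$. This is a Clebsch-Gordan coefficient in which all magnetic quantum numbers vanish, so flipping their signs leaves it unchanged.

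Concretely, I would apply Lemma \ref{lem:cg_coeff}(3) with $m=m'=M=0$. This gives
\begin{align*}
\langle l 0 l' 0|L 0\rangle = (-1)^{l+l'-L}\langle l, 0, l', 0|L, 0\rangle .
\end{align*}
If $l+l'-L$ is odd, the factor $(-1)^{l+l'-L}$ equals $-1$. Hence $\langle l 0 l' 0|L 0\rangle = -\langle l 0 l' 0|L 0\rangle$, and so $\langle l 0 l' 0|L 0\rangle=0$. Here I use that the coefficient is a real number by Lemma \ref{lem:cg_coeff}(1), although the identity $x=-x$ forces $x=0$ in $\mathbb{C}$ anyway.

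Substituting this into \eqref{eq:dlmlmLM_definition}, the product $\langle l 0 l' 0|L 0\rangle \langle l m l' m'|L M\rangle$ vanishes for every choice of $m,m',M$. Therefore $d_{l,m,l',m'}^{L,M}=0$.

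There is no real obstacle in this argument. The only point to check is that the symmetry relation applies with all magnetic numbers zero, and it does, since (3) is stated for arbitrary admissible indices. An alternative route is to use Corollary \ref{cor:sphericalharmonics_exp_coeff_properties}(3) directly, but that identity carries no sign factor, so it does not immediately give the result. The cleaner route is to work at the level of the Clebsch-Gordan coefficient as above.
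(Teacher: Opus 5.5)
Your proposal is correct and follows the paper's proof: both apply the sign-change symmetry of Lemma~\ref{lem:cg_coeff}(3) with all magnetic numbers zero to obtain $\langle l 0 l' 0|L 0\rangle = -\langle l 0 l' 0|L 0\rangle = 0$. That prefactor vanishes in the definition \eqref{eq:dlmlmLM_definition}, so $d_{l,m,l',m'}^{L,M}=0$.
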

\begin{proof}
From the symmetry under change of sign, we get
$\langle l 0 l' 0|L 0\rangle = (-1)^{l+l'-L}\langle l 0 l' 0|L 0\rangle$. If $l+l'-L$ is odd, then $\langle l 0 l' 0|L 0\rangle=-\langle l 0 l' 0|L 0\rangle =0 $. Thus $d_{l,m,l',m'}^{L,M} = 0$.
\end{proof}

\subsection{Perturbation expansions for eigenfrequencies of the ball: first order}

Let $\lambda(\Omega_\varepsilon) = k(\varepsilon)^2$ be an eigenvalue of the near-spherical domain. Just as in two dimensions, we expand $k(\varepsilon)$ into 
\begin{align*}
    k(\varepsilon) = k_0 + \varepsilon k_1 + \varepsilon^2 k_2 + O(\varepsilon^3) \qquad (\varepsilon \to 0).
\end{align*}

\begin{proposition}   \label{prop:ball_first_order}
Let $\lambda_n(\mathbb{B}) = j_{l+1/2,p}^2 $ and $k_0 = j_{l+1/2,p}$. There exists an $n'\in \mathbb{N}$ such that $\lambda_{n'+\ell}(\mathbb{B}) = j_{l+1/2,p}^2 $ for all $-l\leq \ell\leq l$. Set $j = n-n'$ and denote the eigenvalues of the Hermitian matrix $A \in \mathbb{C}^{(2l+1)\times (2l+1)}$ defined by
\begin{align} \label{eq:k1_matrix_setup}
    A_{M,m} = -k_0\sum_{l',m'} a_{l',m'} d_{l,m,l',m'}^{l,M}, \qquad -l\leq M,m \leq l, 
\end{align}
in increasing order by $k_{1}^{(-l)}, ... , k_1^{(l)}$. Then the first order perturbation expansion coefficient of $k(\varepsilon)=\sqrt{\lambda_n(\Omega_\varepsilon)}$ is given by $k_1 = k_{1}^{(j)}$. Furthermore, it holds
\begin{align} \label{eq:k1_ball_trace}
    \sum_{\ell=-l}^l k_{1}^{(\ell)} = 0.
\end{align}

\end{proposition}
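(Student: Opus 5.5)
I will mirror the two-dimensional argument from the proof of Proposition \ref{prop:wk_lemma}. Any eigenfunction of the perturbed domain is expanded as
\[
u(r,\theta,\varphi,\varepsilon)=\sum_{L,M} C_{L,M}(\varepsilon)\, j_L(k(\varepsilon) r)\, Y_L^M(\theta,\varphi),
\qquad
C_{L,M}(\varepsilon)=\delta_{L,l}\,c_M+\varepsilon c_{L,M}'+O(\varepsilon^2),
\]
where $(c_M)_{-l\le M\le l}$ is a nonzero vector that selects a limit eigenfunction in the unperturbed eigenspace. The boundary condition reads $u(r(\theta,\varphi,\varepsilon),\theta,\varphi,\varepsilon)=0$. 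I Taylor expand $j_L$ around $k_0$, using
\[
(k-k_0)+k(r-1)=\varepsilon\Big(k_1+k_0\sum_{l',m'}a_{l',m'}Y_{l'}^{m'}\Big)+O(\varepsilon^2).
\]
At order $\varepsilon$ this gives
\[
j_l'(k_0)\sum_m c_m Y_l^m\Big(k_1+k_0\sum_{l',m'}a_{l',m'}Y_{l'}^{m'}\Big)+\sum_{L,M} j_L(k_0)\,c'_{L,M}Y_L^M=0.
\]
I expand the products $Y_l^mY_{l'}^{m'}$ with Lemma \ref{lem:sphericalharmonics_exp} and project onto $Y_l^M$. Since $j_l(k_0)=0$, the $c'$ terms drop out, and $j_l'(k_0)\neq0$ (zeros of $J_{l+1/2}$ are simple) can be divided out. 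This leaves
\[
k_1 c_M = -k_0\sum_m\sum_{l',m'} a_{l',m'}\, d^{l,M}_{l,m,l',m'}\, c_m = (Ac)_M .
\]
So $k_1$ must be an eigenvalue of $A$ with eigenvector $c$. The projections onto $Y_L^M$ with $L\neq l$ only determine $c'_{L,M}$, because $j_L(k_0)\neq0$ by Siegel's theorem, and they impose no further constraint at this order.

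Next I check that $A$ is Hermitian. Using $a_{l',-m'}=(-1)^{m'}\overline{a_{l',m'}}$ and reality of the $d$'s, the required identity is
\[
\overline{A_{m,M}}=-k_0\sum_{l',m'} a_{l',-m'}(-1)^{m'} d^{l,m}_{l,M,l',m'}.
\]
This should equal $A_{M,m}$. I get it by reindexing $m'\to -m'$ and applying permutation symmetry (2) and sign symmetry from Corollary \ref{cor:sphericalharmonics_exp_coeff_properties}. Concretely I want $d^{l,m}_{l,M,l',-m'}(-1)^{m'}=d^{l,M}_{l,m,l',m'}$, which is the identity $d^{L,M}_{l,m,l',m'}=(-1)^{m'}d^{l,m}_{l',-m',L,M}$ combined with permutation (1); I will track the signs $(-1)^{m-M}$ carefully using $M=m+m'$. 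Alternatively, $A$ is the matrix of multiplication by the real function $\sum a_{l',m'}Y_{l'}^{m'}$ compressed to the span of $\{Y_l^m\}$ in $L^2(S^2)$. Since $A_{M,m}=-k_0\langle Y_l^M,\rho\, Y_l^m\rangle$ with $\rho$ real, Hermiticity is immediate, and I may present it this way.

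For the trace, Corollary \ref{cor:sphericalharmonics_exp_coeff_properties}(2) shows that $d^{l,m}_{l,m,l',m'}$ vanishes unless $m'=0$. Hence
\[
\Tr A=-k_0\sum_{l'}a_{l',0}\sum_m d^{l,m}_{l,m,l',0}=-k_0\,a_{0,0}\sqrt{\tfrac1{4\pi}}\,(2l+1)=0
\]
by the summation identity (7) and $a_{0,0}=0$ from \eqref{eq:a00_b00}. This gives \eqref{eq:k1_ball_trace}.

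The main obstacle is matching the $2l+1$ branches $\lambda_{n'+\ell}(\Omega_\varepsilon)$ to the ordered eigenvalues $k_1^{(\ell)}$. The formal computation only shows that each branch's $k_1$ is some eigenvalue of $A$. To conclude that the $j$-th smallest perturbed eigenvalue has $k_1=k_1^{(j)}$, with multiplicities counted correctly, I would invoke degenerate analytic perturbation theory. First transfer $\Omega_\varepsilon$ to $\mathbb{B}$ by the radial diffeomorphism, which gives an analytic family of operators (Kato–Rellich). Its eigenvalue branches emanating from $k_0^2$ are analytic in $\varepsilon$, and their first derivatives are the eigenvalues of the Hadamard-type matrix; the computation above identifies that matrix with $2k_0A$. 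Because the branches are ordered for small $\varepsilon>0$ and the first-order slopes determine the ordering, the increasing enumeration matches. Where slopes coincide, ordering is by higher order terms, but $k_1$ is still the same value, so the statement holds. I would cite Kato for this step rather than redo it.
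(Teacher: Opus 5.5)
Your argument is essentially the paper's proof. It uses the same spherical-wave ansatz and Taylor expansion of the boundary condition, and the same projection onto $Y_l^M$ yielding $Ac=k_1c$ (using $j_l(k_0)=0$, $j_l'(k_0)\neq0$). Hermiticity comes from the symmetries of the $d$-coefficients; your alternative $A_{M,m}=-k_0\langle Y_l^M,\rho\,Y_l^m\rangle$ with $\rho$ real is a valid and cleaner shortcut. The trace identity $\Tr A=0$ follows from the selection rules, the summation identity and $a_{0,0}=0$.

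Where you go beyond the paper is the matching of the ordered branches $\lambda_{n'+\ell}(\Omega_\varepsilon)$ to the sorted eigenvalues of $A$, which the paper simply asserts. Your Rellich--Kato argument justifies this step correctly. It also rightly gives $k_1=k_1^{(j)}$ only as a one-sided expansion for $\varepsilon\searrow0$, which is how the paper later uses it. The identification of the first-order matrix with $2k_0A$ is cleanest if you derive it from Hadamard's formula together with $\int_0^1 j_l(k_0r)^2r^2\,dr=\tfrac12 j_l'(k_0)^2$, rather than from the formal ansatz.
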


\begin{proof}
By solving the Helmholtz equation in spherical coordinates, eigenfunctions on the perturbed domain have the form 
\begin{align*}
u(r,\theta,\varphi,\varepsilon) = \sum\limits_{L,M}  C_{L,M}(\varepsilon) j_{L}(k(\varepsilon) r) Y_{L}^{M}(\theta, \varphi)
\end{align*}
where 
\begin{align*}
C_{L,M}(\varepsilon) = \delta_{l,L}c_{L,M} + \varepsilon c_{L,M}' + \varepsilon^2 c_{L,M}'' + O(\varepsilon^3) \qquad (\varepsilon\to 0)
\end{align*}
are complex coefficients. Note that $c_{L,M} =0$ for $L \neq l$ since the corresponding eigenspace of $\lambda_n(\mathbb{B}) =  j_{l+1/2,p}^2$ is spanned by the $(2l+1)$ eigenfunctions $u_{lm}(r,\theta,\varphi)$, $-l\leq m \leq l$. We also assume $C_{L,-M}(\varepsilon)=(-1)^{M}\overline{C_{L,M}(\varepsilon)}$ so that the eigenfunctions are always real-valued. 

As in the two-dimensional case, any eigenfunction on the perturbed domain must satisfy the boundary condition $u=0$ on $\partial \Omega_\varepsilon$ or equivalently $u(r(\theta,\varphi, \varepsilon),\theta,\varphi,\varepsilon)=0 $ for all $\theta \in[0,\pi]$ and $\varphi \in [0,2\pi)$.
Applying Taylor, we obtain
\begin{align} \label{eq:near_ball_boundary_condition}
\begin{split}
u(r(\theta,\varphi,\varepsilon),\theta,\varphi,\varepsilon) % =\sum\limits_{l=0}^{\infty} \sum\limits_{m=-l}^{l}  C_{l,m}(\varepsilon) j_l(kr(\theta,\varphi,\varepsilon)) Y_{l}^{m}(\theta, \varphi)\\
&=\sum\limits_{L,M}  C_{L,M}(\varepsilon) \bigg[ j_{L}(k_0) + j_{L}'(k_0)[(k(\varepsilon)-k_0)+ k(\varepsilon)(r(\theta,\varphi, \varepsilon)-1)] \\
&\qquad +  \frac{1}{2} j_{L}''(k_0)[(k(\varepsilon)-k_0)+ k(\varepsilon)(r(\theta,\varphi, \varepsilon)-1)]^2+O(\varepsilon^3)\bigg]Y_{L}^{M}(\theta, \varphi) \\
&= 0.
\end{split}
\end{align}
Here,
\begin{align*}
(k(\varepsilon)-k_0)+ k(\varepsilon)(r(\theta,\varphi,\varepsilon)-1) &= \varepsilon \bigg( k_1 + k_0 \sum\limits_{l',m'}  a_{l',m'}  Y_{l'}^{m'}(\theta, \varphi)\bigg) \\
&+ \varepsilon^2 \bigg( k_2 + \sum\limits_{l',m'} (k_1 a_{l',m'}  + k_0  b_{l',m'})  Y_{l'}^{m'}(\theta, \varphi) \bigg) \\
&+ O(\varepsilon^3) 
\end{align*}
as $\varepsilon \to 0$. Thus, at order $\varepsilon$, equation \eqref{eq:near_ball_boundary_condition} yields
\begin{align} \label{eq:order_eps_base}
\begin{split}
&j_{l}'(k_0) \bigg(\sum\limits_{m=-l}^{l}  c_{l,m} Y_{l}^{m}(\theta, \varphi) \bigg) \bigg( k_1 + k_0 \sum\limits_{l',m'}  a_{l',m'}  Y_{l'}^{m'}(\theta, \varphi)\bigg)\\
&\; + \sum\limits_{L,M}  c_{L,M}' j_{L}(k_0) Y_{L}^{M}(\theta, \varphi) = 0
\end{split}
\end{align}
and applying Lemma \ref{lem:sphericalharmonics_exp} now leads to
\begin{align} \label{eq:eps_1_order_equ}
\begin{split}
&j_{l}'(k_0) \bigg( k_1\sum\limits_{m=-l}^{l}  c_{l,m} Y_{l}^{m}(\theta, \varphi) + k_0 \sum\limits_{m=-l}^{l}   \sum\limits_{l',m'}   \sum\limits_{L, M}  c_{l,m}a_{l',m'}d_{l,m,l',m'}^{L,M} Y_{L}^{M}(\theta, \varphi)\bigg)\\
&\; + \sum\limits_{L, M}  c_{L,M}' j_L(k_0) Y_{L}^{M}(\theta, \varphi) = 0.
\end{split}
\end{align}
Since the spherical harmonics are linearly independent, this equation can only hold if the coefficients in front of every spherical harmonic $Y_L^M$ term are equal to zero. 

Setting coefficients of $Y_L^M$, $L = l$, $-l\leq M \leq l$, in equation \eqref{eq:eps_1_order_equ} to zero yields 
\begin{align} \label{eq:k_1_eig_val}
k_1  c_{l,M}  + k_0 \sum\limits_{m=-l}^{l}   \sum\limits_{l',m'}   c_{l,m}a_{l',m'}d_{l,m,l',m'}^{l,M} =0,
\end{align}
since $j_{l}'(k_0)\neq 0$ and $j_L(k_0)=j_{l}(k_0)=0$, so the last sum of \eqref{eq:eps_1_order_equ} does not contribute. Now set $c=(c_{l,-l},...,c_{l,l})^T \in \mathbb{C}^{2l+1}$ and the matrix $A\in \mathbb{C}^{(2l+1)\times (2l+1)}$ according to \eqref{eq:k1_matrix_setup}. It follows from the permutation symmetry and symmetry under change of sign properties of Corollary \ref{cor:sphericalharmonics_exp_coeff_properties} that $A$ is indeed Hermitian.
% \begin{align*}
% \overline{A_{m,M}} &= -k_0 \sum\limits_{l',m'}    \overline{ a_{l',m'}}d_{l,M,l',m'}^{l,m}\\
% &= -k_0\sum\limits_{l',m'}   (-1)^{m'} a_{l',-m'}d_{l,M,l',m'}^{l,m}\\
% &= -k_0\sum\limits_{l',m'}     a_{l',-m'}d_{l,-m,l',m'}^{l,-M}\\
% &= -k_0\sum\limits_{l',m'}     a_{l',m'}d_{l,-m,l',-m'}^{l,-M}\\
% &= -k_0\sum\limits_{l',m'}     a_{l',m'}d_{l,m,l',m'}^{l,M}\\
% &= A_{M,m}
% \end{align*}
 Equation \eqref{eq:k_1_eig_val} now becomes equivalent to the matrix eigenvalue problem $$Ac = k_1 c$$
 and the $(2l+1)$ real eigenvalues of $A$ sorted in increasing order become the first order corrections of $\lambda_{n'+\ell}(\Omega_\varepsilon)$, $-l \leq \ell \leq l$. We label them by $k_1^{(-l)}\leq ... \leq k_1^{(l)}$ and the corresponding orthonormalized eigenvectors by $c^{(-l)}$,..., $c^{(l)}$. We have the usual properties of the spectral decomposition. The eigenvectors $\{ c^{(\ell)} \}_{\ell=-l}^{l}$ form a basis of $\mathbb{C}^{2l+1}$ and satisfy
\begin{align*}
A = \sum\limits_{\ell=-l}^{l} k_1^{(\ell)} c^{(\ell)} (c^{(\ell)})^* \qquad &\Leftrightarrow \qquad A_{M,m} = \sum\limits_{\ell=-l}^{l} k_1^{(\ell)} c_{l,M}^{(\ell)} \overline{c_{l,m}^{(\ell)}},\\
(c^{(\ell_1)})^* c^{(\ell_2)}  = \delta_{\ell_1,\ell_2} \qquad &\Leftrightarrow \qquad \sum\limits_{m=-l}^{l}   c_{l,m}^{(\ell_1)} \overline{c_{l,m}^{(\ell_2)}}=  \delta_{\ell_1,\ell_2}.
\end{align*}
To prove equation \eqref{eq:k1_ball_trace}, we observe that
\begin{align*} % \label{eq:trace_A_is_zero}
\sum\limits_{\ell=-l}^{l} k_1^{(\ell)} = \Tr A  &= \sum\limits_{m=-l}^{l} A_{m,m} = -k_0 \sum\limits_{m=-l}^{l}\sum\limits_{l',m'}    a_{l',m'}d_{l,m,l',m'}^{l,m},
\end{align*}
where $d_{l,m,l',m'}^{l,m} = 0$ for $m' \neq 0$ due to the selection rules from Corollary \ref{cor:sphericalharmonics_exp_coeff_properties}. The summation identity from the same corollary then leads to
\begin{align*}
\sum\limits_{\ell=-l}^{l} k_1^{(\ell)}  = -k_0 \sum\limits_{l'=0}^{\infty} a_{l',0} \sum\limits_{m=-l}^{l}d_{l,m,l',0}^{l,m}= -k_0 \sqrt{\frac{1}{4 \pi}}(2l+1) a_{0,0}= 0.
\end{align*}

\end{proof}

We close the discussion of first order corrections by coming back to equation \eqref{eq:eps_1_order_equ}. Setting coefficients of $Y_L^M$, $L\neq l$, $-L\leq M \leq L$, in equation \eqref{eq:eps_1_order_equ} to zero yields
\begin{align}\label{eq:near_ball_cn1}
c_{L,M}' = - k_0 \frac{j_{l}'(k_0)}{j_L(k_0)} \sum\limits_{m=-l}^{l}   \sum\limits_{l',m'}   c_{l,m}a_{l',m'}d_{l,m,l',m'}^{L,M}, \qquad L \neq l,
\end{align}
as the first order correction for coefficients of an eigenfunction. It is natural to denote the resulting coefficients by $c_{L,M}'^{(\ell)}$ when the set of coefficients $c^{(\ell)}=(c_{l,m}^{(\ell)})_{m=-l,...,l}$ is plugged into \eqref{eq:near_ball_cn1}.

\subsection{Perturbation expansions for eigenfrequencies of the ball: second order}

\begin{proposition} \label{prop:ball_second_order_sumk2}
Under the assumptions of Proposition \ref{prop:ball_first_order}, let $k_2^{(\ell)}$ denote the second order perturbation expansion coefficient of $k(\varepsilon)=\sqrt{\lambda_{n'+\ell}(\Omega_\varepsilon)}$. Then,
\begin{align}  \label{eq:k_2_ball_sum}
\sum\limits_{\ell=-l}^{l}\frac{k_2^{(\ell)}}{k_0 } = (2l+1) \sum\limits_{L,M} \bigg(\frac{1}{2 \pi}  +  \sum\limits_{l'\neq l} k_0\frac{j_{l'}'(k_0)}{j_{l'}(k_0)} \frac{(2l'+1)}{4\pi (2L+1)}  \left(\langle l, 0, l',0|L,0\rangle \right)^2  \bigg) |a_{L,M}|^2
\end{align}
\end{proposition}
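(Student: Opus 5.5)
We will mirror the two-dimensional computation: collect the order $\varepsilon^2$ terms of \eqref{eq:near_ball_boundary_condition} and project onto $Y_l^M$. Then we sum the resulting scalar identities over the whole eigenvector basis $c^{(\ell)}$ so that all terms depending on the (unknown) eigenvectors collapse into traces.

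\textbf{Step 1: the second order equation.} The $\varepsilon^2$ coefficient of \eqref{eq:near_ball_boundary_condition} consists of four groups of terms:
\begin{itemize}
\item $j_l'(k_0)\big(\sum_m c_{l,m}Y_l^m\big)\big(k_2+\sum(k_1a_{l',m'}+k_0b_{l',m'})Y_{l'}^{m'}\big)$;
\item $\tfrac12 j_l''(k_0)\big(\sum_m c_{l,m}Y_l^m\big)\big(k_1+k_0\sum a_{l',m'}Y_{l'}^{m'}\big)^2$;
\item $\sum_{L,M} c'_{L,M}j_L'(k_0)Y_L^M\big(k_1+k_0\sum a Y\big)$;
\item $\sum c''_{L,M}j_L(k_0)Y_L^M$.
\end{itemize}
We project onto $Y_l^M$; the $c''$ terms drop out because $j_l(k_0)=0$. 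Products of spherical harmonics are reduced with Lemma \ref{lem:sphericalharmonics_exp}, and triple products by applying it twice. For the Bessel factor we use the spherical Bessel equation at a zero, $j_l''(k_0)=-\tfrac{2}{k_0}j_l'(k_0)$. This is the analogue of $J_m''=-J_m'/k_0$; the factor $2$ will produce the $\tfrac{1}{2\pi}$ instead of $\tfrac{1}{4\pi}$. Dividing by $j_l'(k_0)$ gives, for each $\ell$, a linear system of the form
\[
k_2^{(\ell)}c^{(\ell)}+B(c^{(\ell)})+(\text{terms in }c'^{(\ell)}_{l,\cdot})=0 .
\]
Here $B$ collects the $b$-terms, the $k_1$-terms and the quadratic $a$-terms, and the nondegenerate corrections $c'_{L,M}$, $L\ne l$, are substituted from \eqref{eq:near_ball_cn1}.

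\textbf{Step 2: take inner products and sum over $\ell$.} We multiply by $\overline{c^{(\ell)}_{l,M}}$, sum over $M$, and then sum over $\ell$. By orthonormality and completeness ($\sum_\ell c^{(\ell)}(c^{(\ell)})^*=I$), every term of the form $(c^{(\ell)})^*Xc^{(\ell)}$ with $X$ independent of $\ell$ sums to $\Tr X$. We then treat the pieces as follows.

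\begin{itemize}
\item The $b$-term gives $k_0\sum_{m}\sum_{l',m'} b_{l',m'}d^{l,m}_{l,m,l',0}$. This reduces, by the summation identity, to $k_0(2l+1)b_{0,0}/\sqrt{4\pi}$, which by \eqref{eq:a00_b00} equals $-k_0\frac{2l+1}{4\pi}\sum|a|^2$.
\item The quadratic $a$-term is $\tfrac12 j_l''k_0^2\,\Tr(\text{multiplication by }(\sum aY)^2$ on the $l$-block). Via Lemma \ref{lem:sphericalharmonics_exp} and the orthogonality relation of Corollary \ref{cor:sphericalharmonics_exp_coeff_properties}, this trace becomes a sum over $L,M$ of $|a_{L,M}|^2$ times $\sum_{l'}\frac{(2l+1)(2l'+1)}{4\pi(2L+1)}\langle l0l'0|L0\rangle^2$, including the $l'=l$ term.
\item The $k_1$-terms combine to multiples of $\sum_\ell (k_1^{(\ell)})^2=\Tr A^2$. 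These equal expressions of the same Clebsch–Gordan form restricted to $l'=l$, and they cancel against the $l'=l$ part of the previous trace.
\item The terms $c'_{l,\cdot}$ within the degenerate block appear as $(c^{(\ell)})^*(A-k_1^{(\ell)})c'^{(\ell)}_{l}$-type combinations. These vanish because $c^{(\ell)}$ is an eigenvector of the Hermitian matrix $A$; this is the three-dimensional replacement for the "purely imaginary bracket" argument.
\item Finally, the $c'_{L,M}$, $L\ne l$, terms give $-k_0^2\sum_{L'\ne l}\frac{j_{L'}'(k_0)}{j_{L'}(k_0)}\Tr(P_{L'}^* P_{L'})$, where $P_{L'}$ is the matrix $(\sum_{l',m'}a_{l',m'}d^{L',M}_{l,m,l',m'})$. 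By orthogonality, after using permutation symmetry (2) to move $a$'s index to the top, this is a sum of $|a_{L,M}|^2\frac{(2l+1)(2l'+1)}{4\pi(2L+1)}\langle l0l'0|L0\rangle^2$ weighted by $k_0 j_{l'}'/j_{l'}$.
\end{itemize}

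\textbf{Step 3: identify the constant term.} Collecting the pieces, the remaining $l'$-sums without Bessel weights should simplify using $\sum_{l'}(2l'+1)\langle l0l'0|L0\rangle^2=2L+1$. This identity follows from orthogonality/completeness after permutation symmetry (2). It turns the leftover constants into $(2l+1)\big(\tfrac{1}{4\pi}+\tfrac{1}{4\pi}\big)\sum|a|^2$, giving the $\tfrac{1}{2\pi}$ in \eqref{eq:k_2_ball_sum}.

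The main obstacle is the bookkeeping in Step 2. We must verify that the within-block first-order corrections $c'_{l,M}$ and the $k_1$-dependent terms really cancel, which requires either a careful normalization choice (e.g.\ $(c^{(\ell)})^*c'^{(\ell)}_l=0$) or the eigenvector identity above. We must also get the Clebsch–Gordan index manipulations right, including the phases $(-1)^{m}$ arising from $a_{l,-m}=(-1)^m\overline{a_{l,m}}$, so that every trace becomes a manifestly real multiple of $|a_{L,M}|^2$.
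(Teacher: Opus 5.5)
Your route is the paper's. You project the $\varepsilon^2$ equation onto $Y_l^M$, contract with $\overline{c^{(\ell)}_{l,M}}$, and sum over $M$ and $\ell$ so that everything becomes a trace. The within-block $c'_{l,\cdot}$ terms drop out by the eigenvector property. The $b$-term gives $\frac{2l+1}{4\pi}\sum|a_{L,M}|^2$ via the summation identity and \eqref{eq:a00_b00}. The $c'_{L,M}$, $L\neq l$, terms give the Bessel-weighted sum by orthogonality; your $\Tr(P_{L'}^*P_{L'})$ is the paper's $\Tr K$.

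However, the third bullet of your Step 2 is wrong, and it contradicts your Step 3. The $k_1$-dependent terms do not cancel the $l'=l$ part of the quadratic trace; they cancel each other. Apart from $c'_{l,M}j_l'(k_0)k_1$, which belongs to your fourth bullet, the $k_1$-dependent terms in the $Y_l^M$-component are $\tfrac12 j_l''(k_0)k_1^2c_{l,M}$ and $\big(j_l'(k_0)+k_0j_l''(k_0)\big)k_1\sum_{m}\sum_{l',m'}c_{l,m}a_{l',m'}d^{l,M}_{l,m,l',m'}$. Use $j_l''(k_0)=-2j_l'(k_0)/k_0$ together with \eqref{eq:k_1_eig_val}, which says the double sum equals $-\frac{k_1}{k_0}c_{l,M}$. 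The two terms then become $-\frac{j_l'(k_0)}{k_0}k_1^2c_{l,M}$ and $+\frac{j_l'(k_0)}{k_0}k_1^2c_{l,M}$. So they vanish identically, before any summation over $\ell$. In the paper this is the $+\sum_\ell (k_1^{(\ell)})^2/k_0^2$ in \eqref{eq:second_order_base_summed_step2} cancelling against $S_1^{(1)}=-\sum_\ell (k_1^{(\ell)})^2/k_0^2$.

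If you discarded the $l'=l$ part of the quadratic trace as your bullet says, you would lose the contribution $\sum_\ell (k_1^{(\ell)})^2/k_0^2$. Your Step 3 identity would also fail to close, since $\sum_{l'\neq l}(2l'+1)\langle l0l'0|L0\rangle^2=2L+1-(2l+1)\langle l0l0|L0\rangle^2$. Step 3 produces $\frac{1}{4\pi}+\frac{1}{4\pi}$ only because it silently keeps the full trace.

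Once the full trace is kept, your argument is correct, but the Clebsch--Gordan resummation in Step 3 is a detour. The paper evaluates the quadratic term directly. The trace over the $l$-block of multiplication by $Y_L^M$ is $\sum_{m}d^{l,m}_{l,m,L,M}=\frac{2l+1}{\sqrt{4\pi}}\delta_{L,0}\delta_{M,0}$ by the summation identity (7) of Corollary \ref{cor:sphericalharmonics_exp_coeff_properties}. Hence the quadratic term is $\frac{2l+1}{4\pi}\sum_{L,M}|a_{L,M}|^2$ at once. Your version additionally needs the completeness relation $\sum_{l'}\langle l,0,L,0|l',0\rangle^2=1$, summed over the coupled index. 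That relation is standard, but it is not among the properties collected in Lemma \ref{lem:cg_coeff}.
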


\begin{proof}
At order $\varepsilon^2$, equation \eqref{eq:near_ball_boundary_condition} leads to 
\begin{align} \label{eq:second_order_base}
\begin{split}
& \sum\limits_{m=-l}^{l}  c_{l,m}Y_{l}^{m}(\theta, \varphi) \Bigg[j_{l}'(k_0)\bigg( k_2 + \sum\limits_{l', m'}  (k_1 a_{l',m'} + k_0 b_{l',m'}) Y_{l'}^{m'}(\theta, \varphi) \bigg)\\
&\hspace{4cm}+ \frac{1}{2}j_{l}''(k_0)\bigg( k_1 + k_0 \sum\limits_{l',m'}  a_{l',m'}  Y_{l'}^{m'}(\theta, \varphi)\bigg)^2 \Bigg] \\
&\quad + \sum\limits_{L,M} c_{L,M}' j_L'(k_0)Y_{L}^{M}(\theta, \varphi) \bigg( k_1 + k_0 \sum\limits_{l',m'} a_{l',m'}  Y_{l'}^{m'}(\theta, \varphi)\bigg)\\
&\quad+ \sum\limits_{L,M}  c_{L,M}'' j_L(k_0) Y_{L}^{M}(\theta, \varphi) = 0.
\end{split}
\end{align}
To extract $k_2$ from this equation, it is necessary to set terms in front of $Y_L^M$, $L = l$, $-l\leq M \leq l$, to zero. However, we need to rewrite all appearing products of spherical harmonics in the basis of spherical harmonics. We begin by rewriting the first summand with the square brackets in equation \eqref{eq:second_order_base}. First, expanding the square gives
\begin{align*}
&\bigg( k_1 + k_0 \sum\limits_{l', m'}  a_{l',m'}  Y_{l'}^{m'}(\theta, \varphi)\bigg)^2 \\
% = & \; k_1^2+ 2k_0 k_1 \sum\limits_{l', m'}  a_{l',m'}  Y_{l'}^{m'}(\theta, \varphi)+k_0^2 \sum\limits_{l_1, m_1} \sum\limits_{l_2, m_2}  a_{l_1,m_1}  a_{l_2,m_2} Y_{l_1}^{m_1}(\theta, \varphi)  Y_{l_2}^{m_2}(\theta, \varphi)\\
= & \; k_1^2+  \sum\limits_{l', m'} \bigg(2k_0 k_1a_{l',m'}  +k_0^2   \sum\limits_{l_1, m_1} \sum\limits_{l_2, m_2}  a_{l_1,m_1}  a_{l_2,m_2} d_{l_1,m_1,l_2,m_2}^{l',m'}\bigg)  Y_{l'}^{m'}(\theta, \varphi).
\end{align*}
Thus, the expression in square brackets from \eqref{eq:second_order_base} becomes
\begin{align} \label{eq:second_order_squarebrackets}
\Bigg[ \quad ... \quad \Bigg] = \, \bigg( j_{l}'(k_0)k_2 + \frac{1}{2}j_{l}''(k_0)k_1^2 \bigg) +\sum\limits_{l',m'} e_{l',m'} Y_{l'}^{m'}(\theta, \varphi)
% = \, \Bigg( j_{l^*}'(k_0)k_2 + \frac{1}{2}j_{l^*}''(k_0)k_1^2 \Bigg) +\sum\limits_{l'=0}^{\infty}\sum\limits_{m'=-l'}^{l'} \Bigg( j_{l^*}'(k_0)k_1 a_{l',m'} + j_{l^*}'(k_0)k_0 b_{l',m'}\\
% & +j_{l^*}''(k_0)k_0 k_1a_{l',m'} + \frac{1}{2}j_{l^*}''(k_0)k_0^2   \sum\limits_{l_1=0}^{\infty} \sum\limits_{m_1=-l_1}^{l_1} \sum\limits_{l_2=0}^{\infty} \sum\limits_{m_2=-l_2}^{l_2}  a_{l_1,m_1}  a_{l_2,m_2} d_{l_1,m_1,l_2,m_2}^{l',m'} \Bigg)  Y_{l'}^{m'}(\theta, \varphi)\\
\end{align}
where
\begin{align*}
    e_{l',m'} &= j_{l}'(k_0)k_1 a_{l',m'} + j_{l}'(k_0)k_0 b_{l',m'} +j_{l}''(k_0)k_0 k_1a_{l',m'} \\
    & \;\quad + \frac{1}{2}j_{l}''(k_0)k_0^2   \sum\limits_{l_1, m_1}\sum\limits_{l_2,m_2}  a_{l_1,m_1}  a_{l_2,m_2} d_{l_1,m_1,l_2,m_2}^{l',m'} 
\end{align*}
We insert \eqref{eq:second_order_squarebrackets} into equation \eqref{eq:second_order_base}, expand again products of spherical harmonics and set coefficients of $Y_L^M$, $L = l$, $-l\leq M \leq l$, to zero. Since $j_{l}(k_0)=0$, all terms with coefficients $c_{l,m}''$ vanish. It follows
\begin{align} \label{eq:second_order_base2}
\begin{split}
& c_{l,M} \left( j_{l}'(k_0)k_2 + \frac{1}{2}j_{l}''(k_0)k_1^2 \right)  +\sum\limits_{m=-l}^{l}  \sum\limits_{l',m'}  c_{l,m} e_{l',m'}d_{l,m,l',m'}^{l,M}\\
& \quad +c_{l,M}' j_{l}'(k_0)k_1 +      k_0 \sum\limits_{l'',m''} \sum\limits_{l',m'} c_{l'',m''}' j_{l''}'(k_0) a_{l',m'}  d_{l'',m'',l',m'}^{l,M} =0.
\end{split} 
\end{align}
Note that all terms that involve $k_1$, $k_2$, in particular $e_{l',m'}$, and the coefficients $c_{l,m},c_{l,m}'$ are implicitly equipped with an additional $^{(\ell)}$ superscript. 

We multiply equation \eqref{eq:second_order_base2} by $\overline{c_{l,M}}/(k_0 j_{l}'(k_0))$ and then sum over $-l\leq M \leq l$ as well as over all resulting equations for all $-l \leq  \ell \leq l$ indexing the admissible eigenvectors $c^{(\ell)}$ of $A$ from Proposition \ref{prop:ball_first_order}. This yields
\begin{align}  \label{eq:second_order_base_summed}
\begin{split}
& \sum\limits_{\ell=-l}^{l} \sum\limits_{M=-l}^{l} \bigg[ |c_{l,M}^{(\ell)}|^2 \bigg( \frac{k_2^{(\ell)}}{k_0} + \frac{j_{l}''(k_0)}{2k_0j_{l}'(k_0)}(k_1^{(\ell)})^2 \bigg)  \\
&\quad +\frac{1}{k_0j_{l}'(k_0)}\sum\limits_{m=-l}^{l}  \sum\limits_{l',m'} c_{l,m}^{(\ell)}\overline{c_{l,M}^{(\ell)}} e_{l',m'}^{(\ell)}d_{l,m,l',m'}^{l,M}\\
&\quad  +c_{l,M}'^{(\ell)}\overline{c_{l,M}^{(\ell)}} \frac{k_1^{(\ell)}}{k_0} +       \sum\limits_{l'',m''} \sum\limits_{l',m'} c_{l'',m''}'^{(\ell)}\overline{c_{l,M}^{(\ell)}} \frac{j_{l''}'(k_0)}{j_{l}'(k_0)} a_{l',m'}  d_{l'',m'',l',m'}^{l,M} \bigg]=0. 
\end{split}
\end{align}
First of all, we note that the first line in \eqref{eq:second_order_base_summed} evaluates to
\begin{align*} 
\sum\limits_{\ell=-l}^{l} \sum\limits_{M=-l}^{l}  |c_{l,M}^{(\ell)}|^2 \bigg( \frac{k_2^{(\ell)}}{k_0} + \frac{j_{l}''(k_0)}{2k_0j_{l}'(k_0)}(k_1^{(\ell)})^2 \bigg) = \sum\limits_{\ell=-l}^{l} \bigg( \frac{k_2^{(\ell)}}{k_0} - \frac{(k_1^{(\ell)})^2 }{k_0^2} \bigg),
\end{align*}
since $ \sum_{M=-l}^{l}  |c_{l,M}^{(\ell)}|^2 = \Vert c^{(\ell)} \Vert^2 =1$ for any $-l \leq \ell \leq l$ and $ j_{l}''(k_0) = - 2j_{l}'(k_0)/k_0$
by the spherical Bessel equation. Now we take a closer look at the last line of \eqref{eq:second_order_base_summed}. We split the sum $\sum_{l''}$ from the last term into $\sum_{l'' = l}+ \sum_{l'' \neq l}$. When $l''=l$ we recognize the matrix elements from the first order discussion and get
% \begin{align*}
% \sum\limits_{m''=-l}^{l} \sum\limits_{l', m'} c_{l,m''}'^{(\ell)} \frac{j_{l}'(k_0)}{j_{l^*}'(k_0)} a_{l',m'}  d_{l,m'',l',m'}^{l,M} =- \frac{1}{k_0} \sum\limits_{m''=-l}^{l}  c_{l,m''}'^{(\ell)}   A_{M,m''} .
%  \end{align*} 
%  Let us now perform the full summation for $l=l^*$. Summing over $M$ and $j$ gives
 \begin{align*}
&\sum\limits_{\ell=-l}^{l} \sum\limits_{M=-l}^{l}  \sum\limits_{m''=-l}^{l} \sum\limits_{l',m'} c_{l,m''}'^{(\ell)}\overline{c_{l,M}^{(\ell)}} \frac{j_{l}'(k_0)}{j_{l}'(k_0)} a_{l',m'}  d_{l,m'',l',m'}^{l,M}  \\
& \qquad=- \frac{1}{k_0} \sum\limits_{\ell=-l}^{l}  \sum\limits_{m''=-l}^{l} \sum\limits_{M=-l}^{l}  c_{l,m''}'^{(\ell)}  \overline{c_{l,M}^{(\ell)}} A_{M,m''}=-\sum\limits_{\ell=-l}^{l}  \sum\limits_{m''=-l}^{l} c_{l,m''}'^{(\ell)}  \overline{c_{l,m''}^{(\ell)}}  \frac{k_1^{(\ell)}}{k_0}  
 \end{align*}  
since the matrix $A$ was diagonalized with the eigenvectors $c^{(\ell)}$. Thus, $\sum_{l'' = l}$ cancels exactly the former terms from the last line of \eqref{eq:second_order_base_summed} and we obtain
\begin{align} \label{eq:second_order_base_summed_step2}
\sum\limits_{\ell=-l}^{l}  \frac{k_2^{(\ell)}}{k_0} &= \sum\limits_{\ell=-l}^{l}  \frac{(k_1^{(\ell)})^2 }{k_0^2} + S_1 + S_2 
\end{align}
where
\begin{align} 
S_1 &= -\frac{1}{k_0j_{l}'(k_0)}\sum\limits_{\ell=-l}^{l} \sum\limits_{M=-l}^{l} \sum\limits_{m=-l}^{l}  \sum\limits_{l',m'}  c_{l,m}^{(\ell)}\overline{c_{l,M}^{(\ell)}} e_{l',m'}^{(\ell)} d_{l,m,l',m'}^{l,M} , \label{eq:S_1_def} \\
S_2 &= -     \sum\limits_{\ell=-l}^{l} \sum\limits_{M=-l}^{l}  \sum\limits_{l''\neq l, m''}  \sum\limits_{l',m'} c_{l'',m''}'^{(\ell)}\overline{c_{l,M}^{(\ell)}} \frac{j_{l''}'(k_0)}{j_{l}'(k_0)} a_{l',m'}  d_{l'',m'',l',m'}^{l,M} .  \label{eq:S_2_def}
\end{align}
The proposition then follows from the next two lemmas which evaluate $S_1$ and $S_2$.
\end{proof}

\begin{lemma} \label{lem:lemma_S_1_end_result}
    \begin{align} \label{eq:lemma_S_1_end_result}
        S_1 =(2l+1) \frac{1}{2 \pi}  \sum\limits_{L,M} |a_{L,M}|^2 -  \sum\limits_{\ell=-l}^{l}\frac{(k_1^{(\ell)})^2}{k_0^2 }  
    \end{align}
\end{lemma}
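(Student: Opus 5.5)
The plan is to substitute the definition of $e_{l',m'}^{(\ell)}$ into \eqref{eq:S_1_def} and treat its four pieces one at a time. Throughout, use $j_l''(k_0)=-2j_l'(k_0)/k_0$ so that the factor $j_l'(k_0)$ cancels against the prefactor $-1/(k_0 j_l'(k_0))$. This gives
\begin{align*}
-\frac{e_{l',m'}^{(\ell)}}{k_0 j_l'(k_0)} = -\frac{k_1^{(\ell)}}{k_0}a_{l',m'} - b_{l',m'} + 2\frac{k_1^{(\ell)}}{k_0}a_{l',m'} + \sum_{l_1,m_1}\sum_{l_2,m_2} a_{l_1,m_1}a_{l_2,m_2}d^{l',m'}_{l_1,m_1,l_2,m_2},
\end{align*}
so $S_1$ splits into three contributions: a $k_1$-linear term with net coefficient $+k_1^{(\ell)}/k_0$, a $b$-term, and a quadratic $aa$-term. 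Each is contracted against $\sum_{M,m} c^{(\ell)}_{l,m}\overline{c^{(\ell)}_{l,M}}\, d^{l,M}_{l,m,l',m'}$.

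\textbf{The $k_1$-linear term.} By \eqref{eq:k1_matrix_setup} we have $\sum_{l',m'} a_{l',m'}d^{l,M}_{l,m,l',m'} = -A_{M,m}/k_0$. Hence the contraction equals $-\frac{k_1^{(\ell)}}{k_0^2}(c^{(\ell)})^*Ac^{(\ell)} = -(k_1^{(\ell)})^2/k_0^2$. Summing over $\ell$ produces the second term in \eqref{eq:lemma_S_1_end_result}, with exactly the sign claimed.

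\textbf{The $b$-term and the quadratic term.} For both of these, the $\ell$-dependence enters only through $\sum_\ell c^{(\ell)}_{l,m}\overline{c^{(\ell)}_{l,M}}=\delta_{m,M}$, which is completeness of the orthonormal eigenbasis. This reduces both to traces $\sum_m d^{l,m}_{l,m,l',m'}$. By the selection rules only $m'=0$ survives, and by the summation identity of Corollary \ref{cor:sphericalharmonics_exp_coeff_properties} only $l'=0$ survives, giving the factor $(2l+1)/\sqrt{4\pi}$.

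The $b$-term therefore contributes $-(2l+1)b_{0,0}/\sqrt{4\pi}$. By \eqref{eq:a00_b00} this equals $(2l+1)\frac{1}{4\pi}\sum|a_{L,M}|^2$.

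The quadratic term contributes $\frac{2l+1}{\sqrt{4\pi}}\sum a_{l_1,m_1}a_{l_2,m_2}d^{0,0}_{l_1,m_1,l_2,m_2}$. I evaluate $d^{0,0}_{l_1,m_1,l_2,m_2}$ by permutation symmetry (2) and the special case $l'=m'=0$, which gives $(-1)^{m_1}\frac{1}{\sqrt{4\pi}}\delta_{l_1l_2}\delta_{m_1,-m_2}$. Alternatively, project $Y_{l_1}^{m_1}Y_{l_2}^{m_2}$ directly onto $Y_0^0$ using orthonormality and $Y_l^m=(-1)^m\overline{Y_l^{-m}}$. Together with $a_{l,-m}=(-1)^m\overline{a_{l,m}}$, this yields $\frac{1}{\sqrt{4\pi}}\sum|a_{L,M}|^2$. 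The quadratic term is then $(2l+1)\frac{1}{4\pi}\sum|a_{L,M}|^2$.

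Adding the $b$-term and the quadratic term gives $(2l+1)\frac{1}{2\pi}\sum|a_{L,M}|^2$, which completes \eqref{eq:lemma_S_1_end_result}.

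\textbf{Expected obstacle.} The main risk is sign and phase bookkeeping: the $(-1)^m$ factors in the $d^{0,0}$ evaluation, and the net $+k_1/k_0$ coefficient, which comes from combining $j_l'$ with $j_l''k_0=-2j_l'$. I would double-check both against the $n=1$ case ($l=0$), where $A=0$ and the result must reduce to the known $\frac{1}{2\pi}\sum|a|^2$ contribution.
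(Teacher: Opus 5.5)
Your proposal is correct and follows the paper's proof essentially step for step. It uses the same split of $S_1$ into a $k_1$-linear part (evaluated as $-\frac{k_1^{(\ell)}}{k_0^2}(c^{(\ell)})^*Ac^{(\ell)}$), a $b$-part and a quadratic part, reduces the last two via completeness of the eigenbasis to the traces $\sum_M d^{l,M}_{l,M,l',m'}$, and isolates the $(l',m')=(0,0)$ contribution with the selection rules, the summation identity, \eqref{eq:a00_b00}, and permutation symmetry (2) combined with the special case $l'=m'=0$; your sign bookkeeping (net coefficient $+k_1^{(\ell)}/k_0$ and $d^{0,0}_{l_1,m_1,l_2,m_2}=(-1)^{m_1}\delta_{l_1,l_2}\delta_{m_1,-m_2}/\sqrt{4\pi}$) checks out.
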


\begin{lemma} \label{lem:lemma_S_2_end_result}
    \begin{align} \label{eq:lemma_S_2_end_result}
        S_2 = \sum\limits_{L,M}  \bigg(  \sum\limits_{l'\neq l} k_0\frac{j_{l'}'(k_0)}{j_{l'}(k_0)} \frac{(2l+1)(2l'+1)}{4\pi (2L+1)}  \left(\langle l, 0, l',0|L,0\rangle \right)^2 \bigg) |a_{L,M}|^2.
    \end{align}
\end{lemma}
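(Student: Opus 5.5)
Lemma \ref{lem:lemma_S_2_end_result} asks us to evaluate $S_2$ from \eqref{eq:S_2_def}. The plan is to substitute the first order eigenfunction corrections \eqref{eq:near_ball_cn1} for every $c_{l'',m''}'^{(\ell)}$ with $l''\neq l$. This gives
\begin{align*}
S_2 = \sum_{\ell=-l}^{l}\sum_{M,m=-l}^{l}\sum_{l''\neq l,m''}\sum_{l_1,m_1}\sum_{l',m'} k_0\frac{j_{l''}'(k_0)}{j_{l''}(k_0)}\, c_{l,m}^{(\ell)}\overline{c_{l,M}^{(\ell)}}\, a_{l_1,m_1}a_{l',m'}\, d_{l,m,l_1,m_1}^{l'',m''} d_{l'',m'',l',m'}^{l,M}.
\end{align*}
The factors $j_l'(k_0)$ cancel between \eqref{eq:near_ball_cn1} and the prefactor in \eqref{eq:S_2_def}. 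The key structural point is that $c^{(\ell)}$ enters only through the bilinear form $c_{l,m}^{(\ell)}\overline{c_{l,M}^{(\ell)}}$. Summing over $\ell$ and using that $\{c^{(\ell)}\}$ is an orthonormal basis of $\mathbb{C}^{2l+1}$, completeness gives $\sum_\ell c^{(\ell)}_{l,m}\overline{c^{(\ell)}_{l,M}}=\delta_{m,M}$. So the eigenvectors drop out entirely, just as the individual linear splittings cancel in the trace, and we are left with
\begin{align*}
S_2=\sum_{l''\neq l} k_0\frac{j_{l''}'(k_0)}{j_{l''}(k_0)}\sum_{m,m''}\sum_{l_1,m_1}\sum_{l',m'} a_{l_1,m_1}a_{l',m'}\, d_{l,m,l_1,m_1}^{l'',m''} d_{l'',m'',l',m'}^{l,m}.
\end{align*}

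Next, rewrite the second $d$-coefficient so that both factors have the same lower index pair $(l,m),(l_1\text{ or }l',\cdot)$ and upper index $(l'',m'')$; this prepares the orthogonality relation of Corollary \ref{cor:sphericalharmonics_exp_coeff_properties}(8). First apply permutation symmetry (1) to write $d_{l'',m'',l',m'}^{l,m}=d_{l',m',l'',m''}^{l,m}$. Then apply permutation symmetry (2), in the form $d_{l',m',l'',m''}^{l,m} = (-1)^{m'}d^{l'',m''}_{l,m,l',-m'}$ (up to sign bookkeeping). Finally use the reality condition $a_{l',m'}=(-1)^{m'}\overline{a_{l',-m'}}$ and reindex $m'\to -m'$; this converts $a_{l',m'}(-1)^{m'}$ into $\overline{a_{l',m'}}$. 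The sum then takes the form
\begin{align*}
\sum_{l''\neq l} k_0\frac{j_{l''}'(k_0)}{j_{l''}(k_0)}\sum_{l_1,m_1}\sum_{l',m'}a_{l_1,m_1}\overline{a_{l',m'}}\sum_{m,m''} d^{l'',m''}_{l,m,l_1,m_1}d^{l'',m''}_{l,m,l',m'}.
\end{align*}

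The inner sum is not literally the orthogonality relation of Corollary \ref{cor:sphericalharmonics_exp_coeff_properties}(8), which sums over the two lower $m$-indices with upper indices fixed. Here we sum over $m$ and the upper index $m''$ with the lower $l_1,l'$ fixed. I would use permutation symmetry (2) once more to move $(l'',m'')$ into a lower slot and $(l_1,m_1)$, $(l',m')$ into the upper slot, getting $d^{l_1,m_1}_{l,\cdot,l'',\cdot}\,d^{l',m'}_{l,\cdot,l'',\cdot}$. Orthogonality (8) then yields $\delta_{l_1,l'}\delta_{m_1,m'}\frac{(2l+1)(2l''+1)}{4\pi(2l'+1)}\langle l0l''0|l'0\rangle^2$. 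Renaming $l'\to L$ and $l''\to l'$ gives exactly \eqref{eq:lemma_S_2_end_result}.

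The main obstacle is the sign and index bookkeeping in these symmetry manipulations. The factors $(-1)^{m}$ and $(-1)^{m'}$ must combine to $+1$, which they should because $m+m'=M$-type selection rules fix the parities. The conjugation must also land correctly so that the result is $|a_{L,M}|^2$ rather than $a_{L,M}a_{L,-M}$. I would verify this on the special case $l=0$, where the answer must reduce to the simple-eigenvalue formula analogous to \eqref{eq:k_2m0}. A minor point is interchanging the sums, which is justified because only finitely many $a_{l,m}$ are nonzero, so by the selection rules all sums are finite.
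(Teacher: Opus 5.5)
Your proposal is correct and follows the paper's proof: substitute \eqref{eq:near_ball_cn1}, then use that $\{c^{(\ell)}\}$ is an orthonormal basis to reduce $S_2$ to the trace of the resulting matrix, and finally evaluate the remaining $\sum_{m,m''}$ of the two $d$-coefficients via permutation symmetry (2) and orthogonality (8) of Corollary \ref{cor:sphericalharmonics_exp_coeff_properties}. The only difference is in the bookkeeping order, and your signs do check out: you apply the reality condition $a_{l',m'}=(-1)^{m'}\overline{a_{l',-m'}}$ before orthogonality, and your second use of symmetry (2) contributes $(-1)^{-2m}=1$, whereas the paper carries the factor $(-1)^{m'}\delta_{-m_2,m'}$ to the end.
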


\begin{proof}[Proof of Lemma \ref{lem:lemma_S_1_end_result}]
We begin by inserting $e_{l',m'}$ back into \eqref{eq:S_1_def} and separating the result into three sums. With $ j_{l}''(k_0) = - 2j_{l}'(k_0)/k_0$, we get
\begin{align*}  
% &\frac{1}{k_0j_{l^*}'(k_0)}\sum\limits_{j=-l^*}^{l^*} \sum\limits_{M=-l^*}^{l^*} \sum\limits_{m=-l^*}^{l^*}  \sum\limits_{l'=0}^{\infty}\sum\limits_{m'=-l'}^{l'}  c_{l^*,m}^{(j)}\overline{c_{l^*,M}^{(j)}} e_{l',m'}^{(j)} d_{l^*,m,l',m'}^{l^*,M} \\
S_1&= -\frac{1}{k_0j_{l}'(k_0)}\sum\limits_{\ell=-l}^{l} \sum\limits_{M=-l}^{l} \sum\limits_{m=-l}^{l}  \sum\limits_{l',m'} c_{l,m}^{(\ell)} \overline{c_{l,M}^{(\ell)}} \Bigg( j_{l}'(k_0)k_1^{(\ell)} a_{l',m'} + j_{l}'(k_0)k_0 b_{l',m'}\\
& \qquad +j_{l}''(k_0)k_0 k_1^{(\ell)} a_{l',m'} + \frac{1}{2}j_{l}''(k_0)k_0^2   \sum\limits_{l_1,m_1} \sum\limits_{l_2,m_2}  a_{l_1,m_1}  a_{l_2,m_2} d_{l_1,m_1,l_2,m_2}^{l',m'} \Bigg) d_{l,m,l',m'}^{l,M}\\
% &=  - \sum\limits_{j,M,m,l',m'}  c_{l^*,m}^{(j)} \overline{c_{l^*,M}^{(j)}}\Bigg( \frac{k_1^{(j)}}{k_0} a_{l',m'} + b_{l',m'} - 2\frac{k_1^{(j)}}{k_0} a_{l',m'} \\
% &\qquad -\sum\limits_{l_1=0}^{\infty} \sum\limits_{m_1=-l_1}^{l_1} \sum\limits_{l_2=0}^{\infty} \sum\limits_{m_2=-l_2}^{l_2}  a_{l_1,m_1}  a_{l_2,m_2} d_{l_1,m_1,l_2,m_2}^{l',m'} \Bigg) d_{l^*,m,l',m'}^{l^*,M}\\
&= S_1^{(1)} + S_1^{(2)} + S_1^{(3)}
\end{align*}
where
\begin{align*}
S_1^{(1)}&= \sum\limits_{\ell,M,m,l',m'}   c_{l,m}^{(\ell)} \overline{c_{l,M}^{(\ell)}} \frac{k_1^{(\ell)}}{k_0} a_{l',m'}d_{l,m,l',m'}^{l,M} , \\
S_1^{(2)} &= - \sum\limits_{\ell,M,m,l',m'} c_{l,m}^{(\ell)} \overline{c_{l,M}^{(\ell)}}  b_{l',m'} d_{l,m,l',m'}^{l,M} ,\\
S_1^{(3)} &= \sum\limits_{\ell,M,m,l',m'} \sum\limits_{l_1,m_1,l_2,m_2} c_{l,m}^{(\ell)} \overline{c_{l,M}^{(\ell)}} a_{l_1,m_1}  a_{l_2,m_2} d_{l_1,m_1,l_2,m_2}^{l',m'} d_{l,m,l',m'}^{l,M}.
\end{align*}
We treat each of the three sums separately. The first sum leads to 
\begin{align} \label{eq:S_11_result}
\begin{split}
  S_1^{(1)} &=\frac{1}{k_0}\sum\limits_{\ell=-l}^{l}\frac{k_1^{(\ell)}}{k_0} \sum\limits_{M=-l}^{l} \sum\limits_{m=-l}^{l}    c_{l,m}^{(\ell)} \overline{c_{l,M}^{(\ell)}}   \left(k_0\sum\limits_{l',m'} a_{l',m'}d_{l,m,l',m'}^{l,M} \right) \\
  &=-\frac{1}{k_0}\sum\limits_{\ell=-l}^{l}\frac{k_1^{(\ell)}}{k_0} \sum\limits_{M=-l}^{l} \sum\limits_{m=-l}^{l}    c_{l,m}^{(\ell)} \overline{c_{l,M}^{(\ell)}}  A_{M,m} =-\sum\limits_{\ell=-l}^{l}\frac{(k_1^{(\ell)})^2}{k_0^2 }.  
\end{split}
\end{align}
For the second sum, we interpret
\begin{align*}
    D_{M,m} =  \sum\limits_{l',m'}  b_{l',m'} d_{l,m,l',m'}^{l,M} 
\end{align*}
as entries of a matrix $D \in \mathbb{C}^{(2l+1)\times (2l+1)}$. Then, 
\begin{align*}
    S_1^{(2)} = - \Tr D = -  \sum\limits_{M=-l}^{l} \sum\limits_{l',m'}   b_{l',m'} d_{l,M,l',m'}^{l,M} .
\end{align*}
Selection rules of Corollary \ref{cor:sphericalharmonics_exp_coeff_properties} imply that only terms with $m'=0$ survive. Using the summation identity (7) of Corollary \ref{cor:sphericalharmonics_exp_coeff_properties} and \eqref{eq:a00_b00} eventually gives
\begin{align} \label{eq:S_12_result}
\begin{split}
    S_1^{(2)}&=-\sum\limits_{l'} b_{l',0} \sum\limits_{M=-l}^{l}  d_{l,M,l',0}^{l,M} \\
&= -(2l+1) \sqrt{\frac{1}{4\pi}} b_{0,0}  \\
&= (2l+1) \frac{1}{4\pi} \sum\limits_{L=0}^{\infty} \sum\limits_{M=-L}^{L}  |a_{L,M}|^2.
\end{split}
\end{align}
The third sum can be rearranged to
\begin{align*}
S_1^{(3)}=\sum\limits_{l',m'}\left(\sum\limits_{l_1, m_1} \sum\limits_{l_2,m_2} a_{l_1,m_1}  a_{l_2,m_2} d_{l_1,m_1,l_2,m_2}^{l',m'} \right) \left( \sum\limits_{\ell=-l}^{l} \sum\limits_{M=-l}^{l} \sum\limits_{m=-l}^{l}  c_{l,m}^{(\ell)} \overline{c_{l,M}^{(\ell)}}d_{l,m,l',m'}^{l,M} \right)
\end{align*}
We focus on the latter factor that consists of a triple sum. We define matrices $B^{l',m'} \in  \mathbb{C}^{(2l+1) \times (2l+1)}$ by entries
\begin{align*}
B_{M,m}^{l',m'} = d_{l,m,l',m'}^{l,M}.
\end{align*} 
Then, by selection rules and summation identity (7) of Corollary \ref{cor:sphericalharmonics_exp_coeff_properties},
\begin{align*}
\sum\limits_{\ell=-l}^{l} \sum\limits_{M=-l}^{l} \sum\limits_{m=-l}^{l}  c_{l,m}^{(\ell)} \overline{c_{l,M}^{(\ell)}}d_{l,m,l',m'}^{l,M}  = \Tr B^{l',m'} &= \sum\limits_{M=-l}^{l} B_{M,M}^{l',m'} \\
&=\sum\limits_{M=-l}^{l} d_{l,M,l',m'}^{l,M} \\
&=\delta_{m',0}\sum\limits_{M=-l}^{l} d_{l,M,l',0}^{l,M} \\
&= \sqrt{\frac{1}{4 \pi}}(2l+1)\delta_{m',0}\delta_{l',0} .
\end{align*}
Therefore,
\begin{align} \label{eq:S_13_result}
\begin{split}
S_1^{(3)} &=(2l+1)\sqrt{\frac{1}{4 \pi}}\sum\limits_{l_1,m_1} \sum\limits_{l_2,m_2} a_{l_1,m_1}  a_{l_2,m_2} d_{l_1,m_1,l_2,m_2}^{0,0} \\ 
&=(2l+1)\sqrt{\frac{1}{4 \pi}}\sum\limits_{l_1,m_1} \sum\limits_{l_2,m_2} a_{l_1,m_1}  a_{l_2,m_2} (-1)^{m_2} d_{0,0,l_2,m_2}^{l_1,-m_1} \\ 
&=(2l+1)\sqrt{\frac{1}{4 \pi}}\sum\limits_{l_1,m_1} \sum\limits_{l_2,m_2} a_{l_1,m_1}  a_{l_2,-m_2} (-1)^{m_2} d_{0,0,l_2,-m_2}^{l_1,-m_1} \\
&=(2l+1)\sqrt{\frac{1}{4 \pi}}\sum\limits_{l_1,m_1} \sum\limits_{l_2,m_2} a_{l_1,m_1}  \overline{a_{l_2,m_2}}  d_{0,0,l_2,m_2}^{l_1,m_1} \\
&=(2l+1)\sqrt{\frac{1}{4 \pi}}\sum\limits_{l_1,m_1} \sum\limits_{l_2,m_2} a_{l_1,m_1}  \overline{a_{l_2,m_2}} \sqrt{\frac{1}{4 \pi}} \delta_{l_1,l_2}\delta_{m_1,m_2}  \\
&=(2l+1)\frac{1}{4 \pi}\sum\limits_{L,M}  |a_{L,M}|^2. 
\end{split}
\end{align}
Putting together \eqref{eq:S_11_result}, \eqref{eq:S_12_result} and \eqref{eq:S_13_result} proves the lemma.
\end{proof}

\begin{proof}[Proof of Lemma \ref{lem:lemma_S_2_end_result}]
Since $l''$ in \eqref{eq:S_2_def} runs over $l''\neq l$, we can use equation \eqref{eq:near_ball_cn1} for $c_{l'',m''}'^{(\ell)}$. This gives us
\begin{align} \label{eq:k2_intermediate_step}
 S_2=   \sum\limits_{\ell,m_1,M} c_{l,m_1}^{(\ell)}\overline{c_{l,M}^{(\ell)}}K_{M,m_1}
\end{align}
where we define the matrix $K \in \mathbb{C}^{(2l+1) \times (2l+1)}$ by
\begin{align*}
% K&=(K_{M,m_1})_{M,m_1=-l^*,...,l^*} \in \mathbb{C}^{(2l^*+1) \times (2l^*+1)}, \\
K_{M,m_1} &= \sum\limits_{\substack{l''\neq l, m'',\\ l',m',l_2,m_2}}   k_0\frac{j_{l''}'(k_0)}{j_{l''}(k_0)} a_{l_2,m_2}a_{l',m'} d_{l,m_1,l_2,m_2}^{l'',m''}   d_{l'',m'',l',m'}^{l,M} .
\end{align*} 
Since $\{ c^{(\ell)} \}_{\ell=-l,...,l}$ forms an orthonormal basis of $\mathbb{C}^{2l+1}$, we have
\begin{align*}
S_2 = \Tr K &= \sum\limits_{M=-l}^{l} K_{M,M}\\
% &= \sum\limits_{M=-l}^{l} \sum\limits_{\substack{l''\neq l, m'',\\ l',m',l_2,m_2}}   k_0\frac{j_{l''}'(k_0)}{j_{l''}(k_0)} a_{l_2,m_2}a_{l',m'} d_{l,M,l_2,m_2}^{l'',m''}   d_{l'',m'',l',m'}^{l,M}\\
&=\sum\limits_{l_2,m_2,l',m'}      a_{l_2,m_2}a_{l',m'} \sum\limits_{l''\neq l} k_0\frac{j_{l''}'(k_0)}{j_{l''}(k_0)} \sum\limits_{m'',M}  d_{l,M,l_2,m_2}^{l'',m''}   d_{l'',m'',l',m'}^{l,M}.\\
\end{align*}
To evaluate this sum, we use selection rules of Corollary \ref{cor:sphericalharmonics_exp_coeff_properties} to see that only terms with $M+m_2=m''=M-m'$ survive. Together with permutation symmetry and orthogonality relations this yields
\begin{align*}
\sum\limits_{m'',M}  d_{l,M,l_2,m_2}^{l'',m''}   d_{l'',m'',l',m'}^{l,M} &= \sum\limits_{m'',M} (-1)^{-M}(-1)^{-m''} d_{l,M,l'',-m''}^{l_2,-m_2}   d_{l'',m'',l,-M}^{l',-m'} \\
&= \sum\limits_{m^*,M} (-1)^{m''-M} d_{l,M,l'',m^*}^{l_2,-m_2}   d_{l'',-m^*,l,-M}^{l',-m'} \\
&= (-1)^{m'} \sum\limits_{m^*,M}  d_{l,M,l'',m^*}^{l_2,-m_2}   d_{l'',m^*,l,M}^{l',m'} \\
&= (-1)^{m'} \sum\limits_{m^*,M}  d_{l,M,l'',m^*}^{l_2,-m_2}   d_{l,M,l'',m^*}^{l',m'} \\
&= (-1)^{m'} \frac{(2l+1)(2l''+1)}{4\pi (2l_2+1)}  \left(\langle l, 0, l'',0|l_2,0\rangle \right)^2 \delta_{l_2,l'} \delta_{-m_2,m'} ,
\end{align*}
where in the second line, we have just relabeled $m^*=-m''$. Inserting this result and relabeling $l''$ as $l'$ above establishes \eqref{eq:lemma_S_2_end_result} and finishes the proof of the lemma.
\end{proof}

Another ingredient that we need in the next section is an expression for sums of squares of first order perturbation expansion coefficients.

\begin{proposition} \label{prop:ball_second_order_sumk1_sqr}
Under the assumptions of Proposition \ref{prop:ball_first_order}, it holds
\begin{align}  \label{eq:k_1_ball_squared_sum}
\sum\limits_{\ell=-l}^{l}\frac{(k_1^{(\ell)})^2}{k_0^2 } = (2l+1)\sum\limits_{L,M}   \frac{(2l+1)}{4\pi (2L+1)}  \left(\langle l, 0, l,0| L,0\rangle \right)^2 |a_{L,M}|^2.
\end{align}
\end{proposition}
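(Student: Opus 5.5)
Since the $k_1^{(\ell)}$ are the eigenvalues of the Hermitian matrix $A$ from Proposition \ref{prop:ball_first_order}, the sum of their squares is a trace:
\begin{align*}
\sum_{\ell=-l}^{l} (k_1^{(\ell)})^2 = \Tr(A^2) = \Tr(A A^*) = \sum_{M,m=-l}^{l} |A_{M,m}|^2 .
\end{align*}
So I will evaluate $\sum_{M,m}|A_{M,m}|^2$ directly from \eqref{eq:k1_matrix_setup}, without using the eigenvectors at all. Dividing by $k_0^2$ gives
\begin{align*}
\sum_{\ell}\frac{(k_1^{(\ell)})^2}{k_0^2} = \sum_{M,m}\sum_{l',m'}\sum_{l'',m''} a_{l',m'}\overline{a_{l'',m''}}\, d_{l,m,l',m'}^{l,M}\, d_{l,m,l'',m''}^{l,M},
\end{align*}
where I have used that the $d$-coefficients are real (Corollary \ref{cor:sphericalharmonics_exp_coeff_properties}(1)).

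The key step is to bring the inner sum over $m,M$ into a form where the orthogonality relation (8) of Corollary \ref{cor:sphericalharmonics_exp_coeff_properties} applies. For that relation, the summed indices must be the two lower magnetic indices and the free indices the upper ones. I therefore apply permutation symmetry (2) in the form $d_{l,m,l',m'}^{l,M} = (-1)^{-m} d_{l,m,l,-M}^{l',-m'}$, and the same with $l'',m''$. The sign factors combine to $(-1)^{-2m}=1$. Relabelling $M\to -M$ then turns the inner sum into
\begin{align*}
\sum_{m,M} d_{l,m,l,M}^{l',-m'}\, d_{l,m,l,M}^{l'',-m''},
\end{align*}
and by orthogonality this equals
\begin{align*}
\frac{(2l+1)^2}{4\pi(2l'+1)}\bigl(\langle l,0,l,0|l',0\rangle\bigr)^2\,\delta_{l',l''}\,\delta_{m',m''}.
\end{align*}

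Inserting this collapses the double sum over $(l',m')$ and $(l'',m'')$ to a diagonal sum of the terms $|a_{L,M}|^2$. Renaming $l'$ to $L$ gives exactly \eqref{eq:k_1_ball_squared_sum}, with the factor $(2l+1)^2$ split as $(2l+1)\cdot(2l+1)$.

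The main obstacle is the index bookkeeping in the permutation-symmetry step. The sign conventions $(-1)^{\pm m}$ have to cancel, and the relabelling must be done consistently on both $d$ factors before the orthogonality relation (with both lower pairs equal to $(l,\cdot),(l,\cdot)$) can be applied. Corollary \ref{cor:d_coeff_lsum_odd} is a useful consistency check here: only even $L$ contribute, which matches the vanishing of $\langle l,0,l,0|L,0\rangle$ for odd $L$.
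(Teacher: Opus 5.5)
Your argument is correct and follows essentially the paper's route: write the sum as $k_0^{-2}\Tr A^2$, then collapse the double sum over magnetic indices using permutation symmetry (2) and the orthogonality relation of Corollary \ref{cor:sphericalharmonics_exp_coeff_properties}. The only difference is bookkeeping. You use Hermiticity to write $\Tr A^2=\sum_{M,m}|A_{M,m}|^2$, so the conjugates $\overline{a_{l'',m''}}$ appear directly and the sign factors cancel identically. The paper instead works with $\sum_{m,M}A_{m,M}A_{M,m}$, which forces it to also invoke the change-of-sign symmetry and, at the end, the reality condition $a_{L,-M}=(-1)^M\overline{a_{L,M}}$.
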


\begin{proof}
The sum in question is nothing but $(1/k_0^2)\, \Tr A^2$ and therefore
\begin{align*}
\sum\limits_{\ell=-l}^{l}\frac{(k_1^{(\ell)})^2}{k_0^2 }&= \frac{1}{k_0^2 }\sum\limits_{m=-l}^{l} \sum\limits_{M=-l}^{l} \bigg( -k_0\sum\limits_{l_1,m_1}    a_{l_1,m_1}d_{l,M,l_1,m_1}^{l,m}\bigg) \bigg( -k_0\sum\limits_{l_2,m_2}    a_{l_2,m_2}d_{l,m,l_2,m_2}^{l,M}\bigg)\\
&=  \sum\limits_{l_1,m_1} \sum\limits_{l_2,m_2} a_{l_1,m_1}   a_{l_2,m_2} \bigg(  \sum\limits_{m=-l}^{l} \sum\limits_{M=-l}^{l} d_{l,M,l_1,m_1}^{l,m} d_{l,m,l_2,m_2}^{l,M}\bigg) .
\end{align*}
Here, by selection rules ($M+m_1 = m = M-m_2$), permutation symmetry, symmetry under change of sign and orthogonality,
\begin{align*}
\sum\limits_{m=-l}^{l} \sum\limits_{M=-l}^{l} d_{l,M,l_1,m_1}^{l,m} d_{l,m,l_2,m_2}^{l,M} &= \sum\limits_{m=-l}^{l} \sum\limits_{M=-l}^{l} (-1)^M  (-1)^m d_{l,M,l,-m}^{l_1,-m_1} d_{l,m,l,-M}^{l_2,-m_2} \\
&= \sum\limits_{m=-l}^{l} \sum\limits_{M=-l}^{l} (-1)^{M-m} d_{l,M,l,m}^{l_1,-m_1} d_{l,-m,l,-M}^{l_2,-m_2} \\
&= (-1)^{m_2} \sum\limits_{m=-l}^{l} \sum\limits_{M=-l}^{l}  d_{l,M,l,m}^{l_1,-m_1} d_{l,m,l,M}^{l_2,m_2} \\
&= (-1)^{m_2} \frac{(2l+1)(2l+1)}{4\pi (2l_1+1)}  \left(\langle l, 0, l,0|l_1,0\rangle \right)^2 \delta_{l_1,l_2} \delta_{-m_1,m_2}, 
\end{align*}
hence, after relabeling $l_1,m_1$ as $L,M$ we arrive at \eqref{eq:k_1_ball_squared_sum}.
\end{proof}

\subsection{Perturbation expansions for eigenvalue sums of the ball}

In this section, we put together the pieces of the last section and show perturbation expansions for sums of renormalized eigenvalues. We begin with the case where $\lambda_{n}(\mathbb{B}) < \lambda_{n+1}(\mathbb{B}) $.

\begin{proposition} \label{prop:prop3d_Sn_exp_quadratic}
Let $\lambda_n(\mathbb{B})=j_{l+1/2,p}^2$. If the $n$-th eigenvalue of the unit ball is a single eigenvalue or the last instance of a degenerate eigenvalue, i.e.~$\lambda_{n}(\mathbb{B}) < \lambda_{n+1}(\mathbb{B}) $, then
    \begin{align} \label{eq:prop3d_Sn_exp_case2}
    \begin{split}
S_n(\Omega_\varepsilon)& =  S_n(\mathbb{B}) \\
&\; + \varepsilon^2 \cdot \Bigg[ |\mathbb{B}|^{2/3} \sum_{L,M}\sum\limits_{j_{l+1/2,p}^2 \leq \lambda_n(\mathbb{B})} (2l+1) j_{l+1/2,p}^2 \bigg(\frac{1}{ \pi}  + \frac{(2l+1)}{4\pi (2L+1)}  \left(\langle l, 0, l,0|L,0\rangle \right)^2 \\
&\;\qquad \qquad+ 2\sum\limits_{l'\neq l} j_{l+1/2,p}\frac{j_{l'}'(j_{l+1/2,p})}{j_{l'}(j_{l+1/2,p})} \frac{(2l'+1)}{4\pi (2L+1)}  \left(\langle l, 0, l',0|L,0\rangle \right)^2  \bigg) |a_{L,M}|^2 \Bigg] \\
&\;+ O(\varepsilon^3)  \qquad\qquad (\varepsilon\to 0).
\end{split}
\end{align}
\end{proposition}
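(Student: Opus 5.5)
Since $\lambda_n(\mathbb{B})<\lambda_{n+1}(\mathbb{B})$, the index set $\{1,\dots,n\}$ is a union of complete eigenvalue clusters of the ball. Each cluster corresponds to a pair $(l,p)$ with $j_{l+1/2,p}^2\le\lambda_n(\mathbb{B})$ and consists of the $2l+1$ indices $n'-l,\dots,n'+l$ from Proposition \ref{prop:ball_first_order}. We therefore write $S_n(\Omega_\varepsilon)$ as a sum over these clusters of $|\Omega_\varepsilon|^{2/3}\sum_{\ell=-l}^{l}\lambda_{n'+\ell}(\Omega_\varepsilon)$, and expand each cluster sum separately.

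By \eqref{eq:a00_b00} the volume is fixed to second order, so $|\Omega_\varepsilon|^{2/3}=|\mathbb{B}|^{2/3}+O(\varepsilon^3)$. For a single eigenvalue we use
\[
\lambda_{n'+\ell}(\Omega_\varepsilon)=k_0^2\Bigl(1+2\varepsilon\tfrac{k_1^{(\ell)}}{k_0}+\varepsilon^2\Bigl(2\tfrac{k_2^{(\ell)}}{k_0}+\tfrac{(k_1^{(\ell)})^2}{k_0^2}\Bigr)+O(\varepsilon^3)\Bigr),
\]
exactly as in \eqref{eq:single_eig_val_expansion_2D}. Summing over $\ell$, the linear term disappears by the trace identity \eqref{eq:k1_ball_trace}. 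This is the only place first-order information enters, and it is why completeness of the clusters is essential. The cluster sum then equals
\[
(2l+1)k_0^2|\mathbb{B}|^{2/3}+\varepsilon^2k_0^2|\mathbb{B}|^{2/3}\Bigl(2\sum_\ell \tfrac{k_2^{(\ell)}}{k_0}+\sum_\ell\tfrac{(k_1^{(\ell)})^2}{k_0^2}\Bigr)+O(\varepsilon^3).
\]

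Next we insert Proposition \ref{prop:ball_second_order_sumk2} for $\sum_\ell k_2^{(\ell)}/k_0$ and Proposition \ref{prop:ball_second_order_sumk1_sqr} for $\sum_\ell (k_1^{(\ell)})^2/k_0^2$. Twice the first gives $(2l+1)\bigl(\tfrac1\pi+2\sum_{l'\ne l}k_0\tfrac{j_{l'}'(k_0)}{j_{l'}(k_0)}\tfrac{2l'+1}{4\pi(2L+1)}\langle l0l'0|L0\rangle^2\bigr)$ times $|a_{L,M}|^2$. The second contributes $(2l+1)\tfrac{2l+1}{4\pi(2L+1)}\langle l0l0|L0\rangle^2|a_{L,M}|^2$. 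Collecting terms with $k_0=j_{l+1/2,p}$ reproduces the bracket in \eqref{eq:prop3d_Sn_exp_case2}, and summing over clusters and over $L,M$ gives the claim.

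The main obstacle is justifying that the per-eigenvalue expansion with coefficients $k_1^{(\ell)},k_2^{(\ell)}$ (ordered as in Proposition \ref{prop:ball_first_order}) is valid, in particular when $A$ has repeated eigenvalues. Our way around it is that only symmetric quantities of each cluster enter: the trace of $A$, the trace of $A^2$, and the sum of the $k_2^{(\ell)}$, which by Proposition \ref{prop:ball_second_order_sumk2} is independent of the choice of orthonormal eigenbasis. Analytic perturbation theory of the total cluster then gives the expansion of the cluster sum regardless of degeneracies within the cluster. The remaining work is bookkeeping, namely checking that the factors of $2$ and $(2l+1)$ match.
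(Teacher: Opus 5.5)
Your proposal is correct and follows the paper's proof. You split $S_n$ into complete clusters, cancel the linear term with the trace identity \eqref{eq:k1_ball_trace}, and insert Propositions \ref{prop:ball_second_order_sumk2} and \ref{prop:ball_second_order_sumk1_sqr} together with $|\Omega_\varepsilon|^{2/3}=|\mathbb{B}|^{2/3}+O(\varepsilon^3)$, and your bookkeeping of the factors of $2$ and $(2l+1)$ checks out. Your closing remark on justifying the cluster-sum expansion when $A$ has repeated eigenvalues goes beyond what the paper states, since the paper treats the expansions formally. It is consistent with the paper's argument, because only basis-independent cluster traces enter.
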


\begin{proof}
Every renormalized eigenvalue satisfies
\begin{align*} 
|\Omega_\varepsilon|^{2/3}\lambda_k(\Omega_\varepsilon) = |\mathbb{B}|^{2/3}k_0^2 \bigg( 1+ 2\varepsilon\frac{k_1}{k_0} + \varepsilon^2 \left( 2\frac{k_2}{k_0} + \frac{k_1^2}{k_0^2}\right)  + O(\varepsilon^3) \bigg) \qquad (\varepsilon\to 0),
\end{align*}
with $k_1$, $k_2$ depending on $k$ and having the properties established in the previous section. To find a perturbation expansion for $S_n(\Omega_\varepsilon)$, we split $S_n(\Omega_\varepsilon)$ according to eigenvalue clusters of the ball. We start by separating the top cluster from all lower ones, so with the notation of Proposition \ref{prop:ball_first_order}, we have
\begin{align} \label{eq:eigval_sum_split_up3d}
\begin{split}
S_n(\Omega_\varepsilon) &= |\Omega_\varepsilon|^{2/3}\bigg(\sum\limits_{\lambda_k(\mathbb{B}) < \lambda_n(\mathbb{B}) }   \lambda_k(\Omega_\varepsilon) +   \sum\limits_{\ell=-l}^{j}\lambda_{n'+\ell}(\Omega_\varepsilon) \bigg) \\
&= S_{n'-l-1}(\Omega_\varepsilon) +   |\Omega_\varepsilon|^{2/3}\sum\limits_{\ell=-l}^{j}\lambda_{n'+\ell}(\Omega_\varepsilon) ,
\end{split}
\end{align}
where
\begin{align} \label{eq:eigval_sum_split_up3d_topcluster}
\begin{split}
\sum\limits_{\ell=-l}^{j}\lambda_{n'+\ell}(\Omega_\varepsilon)  &= (j+l+1) k_0^2 + \varepsilon \cdot 2k_0^2\sum\limits_{\ell=-l}^{j}\frac{k_1^{(\ell)}}{k_0} \\
& \qquad+ \varepsilon^2 \cdot  k_0^2 \bigg(2\sum\limits_{\ell=-l}^{j}\frac{k_2^{(\ell)}}{k_0} + \sum\limits_{\ell=-l}^{j}\frac{(k_1^{(\ell)})^2}{k_0^2}\bigg)  + O(\varepsilon^3) 
% \sum\limits_{\ell=-l}^{l}k_0^2 \bigg( 1+ 2\varepsilon\frac{k_1^{(\ell)}}{k_0} + \varepsilon^2 \bigg( 2\frac{k_2^{(\ell)}}{k_0} + \frac{(k_1^{(\ell)})^2}{k_0^2}\bigg)  + O(\varepsilon^3) \bigg)\\
\end{split}
\end{align}
as $\varepsilon\searrow 0$ where $k_0 = j_{l+1/2,p}$ and $k_1^{(\ell)}$, $k_2^{(\ell)}$ are the correction terms for this particular $k_0$ under a given perturbation. The assumption that $\lambda_n(\mathbb{B})$ is a single eigenvalue or the last instance of a degenerate eigenvalue means that $j=l$. One can therefore apply Propositions \ref{prop:ball_first_order},  \ref{prop:ball_second_order_sumk2} and \ref{prop:ball_second_order_sumk1_sqr} to see that first order corrections vanish and only second order terms remain. For the sum $S_{n'-l-1}(\Omega_\varepsilon)$, one observes that $\lambda_{n'-l-1}(\mathbb{B})$ is again a single eigenvalue or the last instance of a degenerate eigenvalue, so the same reasoning can be applied for the next lower cluster. Summing all terms for all eigenvalue clusters and using $|\Omega_\varepsilon|^{2/3}=|\mathbb{B}|^{2/3} + O(\varepsilon^3)$ yields \eqref{eq:prop3d_Sn_exp_case2}.
\end{proof}

\begin{proposition} \label{prop:prop3d_Sn_exp_linear}
Let $\lambda_n(\mathbb{B})=j_{l+1/2,p}^2$ with $l\geq 1$ be a degenerate eigenvalue. If the $n$-th eigenvalue of the unit ball is not the last instance of that degenerate eigenvalue, i.e.~$\lambda_n(\mathbb{B})=\lambda_{n+1}(\mathbb{B})$, then 
    \begin{align} \label{eq:prop3d_Sn_exp_case1}
        S_n(\Omega_\varepsilon) = S_n(\mathbb{B}) + \varepsilon \cdot 2 |\mathbb{B}|^{2/3}  j_{l+1/2,p}^2 \sum_{\ell = -l}^{j} \frac{k_1^{(\ell)}}{k_0}+O(\varepsilon^2) \qquad (\varepsilon\searrow 0)
    \end{align}
and there exists a perturbation with 
\begin{align*}
    \sum_{\ell = -l}^{j} \frac{k_1^{(\ell)}}{k_0} < 0 .
\end{align*}
\end{proposition}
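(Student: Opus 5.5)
The expansion \eqref{eq:prop3d_Sn_exp_case1} follows the same cluster decomposition as in the proof of Proposition \ref{prop:prop3d_Sn_exp_quadratic}. Write $n = n'+j$ with $-l\le j<l$; the condition $\lambda_n(\mathbb{B})=\lambda_{n+1}(\mathbb{B})$ means exactly $j<l$. Then use the splitting \eqref{eq:eigval_sum_split_up3d}. Here $\lambda_{n'-l-1}(\mathbb{B})$ is either absent or the last instance of its cluster, so Proposition \ref{prop:prop3d_Sn_exp_quadratic} applies to $S_{n'-l-1}(\Omega_\varepsilon)$ and gives $S_{n'-l-1}(\mathbb{B})+O(\varepsilon^2)$. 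For the incomplete top cluster, \eqref{eq:eigval_sum_split_up3d_topcluster} gives the term $(j+l+1)k_0^2$ plus the linear term $2\varepsilon k_0^2\sum_{\ell=-l}^j k_1^{(\ell)}/k_0$ plus $O(\varepsilon^2)$. We multiply by $|\Omega_\varepsilon|^{2/3}=|\mathbb{B}|^{2/3}+O(\varepsilon^3)$, which holds by the normalization \eqref{eq:a00_b00}. For small $\varepsilon>0$ the eigenvalues $\lambda_{n'+\ell}(\Omega_\varepsilon)$, ordered increasingly, have linear coefficients given by the sorted eigenvalues $k_1^{(\ell)}$ of $A$; this is Proposition \ref{prop:ball_first_order}, valid for $\varepsilon\searrow 0$. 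This yields \eqref{eq:prop3d_Sn_exp_case1}.

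For the existence statement, note that $k_1^{(-l)}\le\dots\le k_1^{(l)}$ are the sorted eigenvalues of the Hermitian matrix $A$, and their total sum vanishes by \eqref{eq:k1_ball_trace}. A partial sum of the $j+l+1<2l+1$ smallest eigenvalues of a traceless Hermitian matrix is $\le 0$, with equality only if $A=0$. Indeed, the average of the smallest $r$ eigenvalues is at most the average of all eigenvalues, which is $0$. If equality holds, all eigenvalues equal the mean $0$, and since $A$ is Hermitian this forces $A=0$. So it suffices to find a perturbation with $A\neq 0$.

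I would choose $a_{2,0}\neq 0$ real and all other $a_{L,M}=0$. This respects $a_{0,0}=0$ and the reality condition. Then $A_{M,m}=-k_0 a_{2,0}\, d^{l,M}_{l,m,2,0}$, which is diagonal by the selection rules, with entries proportional to $\langle l0 2 0|l0\rangle\langle l m 2 0|l m\rangle$. The main point to check is that this is not identically zero for $l\ge1$. It is easiest to argue via Proposition \ref{prop:ball_second_order_sumk1_sqr}: $\Tr A^2/k_0^2=(2l+1)^2(\langle l0l0|20\rangle)^2|a_{2,0}|^2/(20\pi)$. The coefficient $\langle l0l0|20\rangle$ is nonzero for $l\ge1$; it is proportional to $\int P_l^2P_2\neq0$, or it can be computed from the explicit formula, giving a value proportional to $-\sqrt{l(l+1)/((2l-1)(2l+3))}$ up to sign. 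Hence $A\neq0$, the partial sum is strictly negative, and the proposition follows. As an alternative check that avoids Clebsch--Gordan values, one can verify directly that $\int |Y_l^m|^2 Y_2^0$ is a nonconstant function of $m$, namely proportional to $l(l+1)-3m^2$.
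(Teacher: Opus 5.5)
Your proposal is correct and follows the paper's proof: the same cluster splitting produces the linear term, and the same choice $a_{2,0}\neq 0$ makes $A$ diagonal and nonzero, so tracelessness plus the increasing ordering forces the partial sum $\sum_{\ell=-l}^{j} k_1^{(\ell)}$ to be strictly negative. The only minor difference is how you certify $A\neq 0$: you use $\Tr A^2$ from Proposition \ref{prop:ball_second_order_sumk1_sqr}, which needs only $\langle l,0,l,0|2,0\rangle\neq 0$, or alternatively the $m$-dependence $l(l+1)-3m^2$, whereas the paper evaluates the single entry $A_{l,l}$ through the explicit value of $d_{l,l,l,-l}^{2,0}$.
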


\begin{proof}
Equation \eqref{eq:prop3d_Sn_exp_case1} follows immediately from \eqref{eq:eigval_sum_split_up3d}, \eqref{eq:eigval_sum_split_up3d_topcluster} and $|\Omega_\varepsilon|^{2/3}=|\mathbb{B}|^{2/3} + O(\varepsilon^3)$. The assumption that the $n$-th eigenvalue of the unit ball is not the last instance of a degenerate eigenvalue means that $j<l$. To prove the existence part of the proposition, it is sufficient to find a perturbation with $k_1^{(\ell)} \neq 0 $ for some $-l \leq \ell \leq l$, since $\sum_{\ell=-l}^l k_1^{(\ell)} = 0$ and $\{ k_1^{(\ell)} \}_{\ell=-l}^{l}$ being sorted in increasing order then automatically implies $\sum_{\ell=-l}^j k_1^{(\ell)} < 0$ for any $j<l$. 

We consider the perturbation defined by
\begin{align*}
a_{L,M} = \begin{cases}
1 & \text{ if } (L,M)=(2,0),\\
0 & \text{ else.}
\end{cases}
\end{align*}
The entries of the matrix $A$ from Proposition \ref{prop:ball_first_order} are then given by $A_{M,m} =-k_0  d_{l,m,2,0}^{l,M}$. Selection rules imply that $d_{l,m,2,0}^{l,M}=0$ if $m\neq M$, so $A$ is diagonal. The first order corrections $k_1^{(\ell)}$ are thus just the diagonal entries of $A$ sorted by increasing value. Permutation symmetry yields $A_{m,m} = -k_0(-1)^{-m}d_{l,m,l,-m}^{2,0}$. We claim that $A_{l,l}\neq 0$. Indeed, one can show using explicit formulas for Clebsch-Gordan coefficients that 
\begin{align*}
 \langle l,0, l, 0|2, 0\rangle &=(-1)^{l-1} (2l(l+1)) \sqrt{\frac{5(2l-2)!}{(2l+3)!}} , \\
 \langle l,l, l, -l|2, 0\rangle  &= (2l)! \sqrt{\frac{5}{(2l-2)!(2l+3)!}} 
\end{align*}
and therefore
\begin{align*}
d_{l,l,l,-l}^{2,0} = (-1)^{l-1}\sqrt{\frac{5}{4\pi}} \frac{l }{(2l+3)} \neq 0, \qquad l\geq 1.
\end{align*}
This finishes the proof of the proposition.
\end{proof}

Theorem \ref{thm:local_minimizer_ball3d} now follows from the previous two propositions.

\begin{proof}[Proof of Theorem \ref{thm:local_minimizer_ball3d}]
Case (1) of the theorem follows from Proposition \ref{prop:prop3d_Sn_exp_linear} which shows that the ball is not a local minimizer, as one can deform the ball to a domain $\Omega_\varepsilon$ with $S_n(\Omega_\varepsilon) < S_n(\mathbb{B})$ for small enough $\varepsilon>0$. 

In case (2), Proposition \ref{prop:prop3d_Sn_exp_quadratic} and the fact that $a_{0,0}=0$ show 
\begin{align*}
S_n(\Omega_\varepsilon) =  S_n(\mathbb{B}) + \varepsilon^2 \cdot \sum\limits_{L \geq 1,M} D_{L} |a_{L,M}|^2 + O(\varepsilon^3) \qquad (\varepsilon\to 0)
\end{align*}
with coefficients $D_{L}$ as stated in \eqref{eq:local_minimizer_ball_case2_DL}. We claim that $D_1 = 0$ for any $n$ since these modes correspond to infinitesimal translations. By definition,
\begin{align*} 
D_{1} &= |\mathbb{B}|^{2/3}\sum\limits_{j_{l+1/2,p}^2 \leq \lambda_n(\mathbb{B})} (2l+1) j_{l+1/2,p}^2 \Bigg(\frac{1}{ \pi}  + \frac{(2l+1)}{4\pi \cdot 3 }  \big(\langle l, 0, l,0|1,0\rangle \big)^2 \\
&\hspace{2.8cm}+ 2\sum\limits_{l'\neq l} j_{l+1/2,p}\frac{j_{l'}'(j_{l+1/2,p})}{j_{l'}(j_{l+1/2,p})} \frac{(2l'+1)}{4\pi \cdot 3 }  \big(\langle l, 0, l',0|1,0\rangle \big)^2  \Bigg)
\end{align*}
where $\langle l, 0, l,0|1,0\rangle = 0$, since $\langle l, 0, l,0|1,0\rangle = (-1)^{2l-1} \langle l, 0, l,0|1,0\rangle$ by Lemma \ref{lem:cg_coeff} (3). Selection rules imply that the sum $\sum_{l'\neq l}$ only contains $l' = l \pm 1$ if $l\geq 1$ or $l'=1$ if $l=0$. If $l=0$, then $ \langle 0, 0, 1,0|1,0\rangle = 1$ and from the recurrence relation $ j_1'(x) = j_{0}(x) - \frac{2}{x}j_1(x) $, we obtain 
\begin{align*}
    j_{1/2,p}\frac{j_{1}'(j_{1/2,p})}{j_{1}(j_{1/2,p})} = -2
\end{align*}
and therefore $D_1 = 0$. If $l \geq 1$, then 
\begin{align*}
    \big(\langle l, 0, l-1,0|1,0\rangle \big)^2 &= \frac{3l}{(2l-1)(2l+1)} , \\
    \big(\langle l, 0, l+1,0|1,0\rangle \big)^2 &= \frac{3(l+1)}{(2l+1)(2l+3)} 
\end{align*}
and from the recurrence relation $ j_\nu'(x) = j_{\nu-1}(x) - \frac{\nu+1}{x}j_\nu(x) = \frac{\nu}{x}j_\nu(x) - j_{\nu+1}(x)$ for $\nu \in \mathbb{R}$, we get
\begin{align*}
j_{l+1/2,p}\frac{j_{l-1}'(j_{l+1/2,p})}{j_{l-1}(j_{l+1/2,p})} &=   l-1, \\
    j_{l+1/2,p}\frac{j_{l+1}'(j_{l+1/2,p})}{j_{l+1}(j_{l+1/2,p})} &= - (l+2)  
\end{align*}
and thus $D_1 = 0$.

Hence, the ball is a weak local minimizer of $S_n(\Omega)$ if all second order perturbation coefficients $D_{L}$, $L\geq 2$, are positive. If $D_{L}$ is negative for some $L\geq 2$, then the ball is not a local minimizer but a saddle point, because perturbing the spherical boundary with $a_{L, M}\neq 0$ for some $-L \leq M \leq L$ decreases $S_n(\Omega)$ quadratically for small enough $\varepsilon$. 
\end{proof}

We close this section with an analogue of Lemma \ref{lem:dj_pos_2mmax}, showing that perturbations which introduce wild fluctuations to the surface of the ball give positive quadratic corrections to $S_n(\Omega)$.

\begin{lemma}\label{lem:Dj_pos_2mmax}
Let $n\in\mathbb{N}$ and $l_{max}(n) := \max\lbrace l : j_{l+1/2,1}^2 \leq \lambda_n(\mathbb{B})\rbrace $. Then
\begin{align*}
    D_{L}>0  
\end{align*}
for all $L \geq \big(1+\sqrt{4/3} \big) l_{max}(n)+6$.
\end{lemma}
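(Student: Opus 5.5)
We mimic the proof of Lemma \ref{lem:dj_pos_2mmax}. In the expression \eqref{eq:local_minimizer_ball_case2_DL} for $D_L$, the first two summands inside the bracket, $\frac{1}{\pi}$ and the squared Clebsch--Gordan term, are nonnegative, and $\frac1\pi>0$. It therefore suffices to show that every logarithmic derivative $j_{l'}'(k_0)/j_{l'}(k_0)$ that enters with a nonzero weight is positive. Here $k_0=j_{l+1/2,p}$ with $j_{l+1/2,p}^2\le\lambda_n(\mathbb{B})$. By the selection rules (Corollary \ref{cor:sphericalharmonics_exp_coeff_properties} (2)), the weight $\langle l,0,l',0|L,0\rangle$ vanishes unless $|l-L|\le l'\le l+L$. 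Since $l\le l_{max}(n)$, only indices $l'\ge L-l\ge L-l_{max}(n)$ contribute.

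Next we need a lower bound on the first critical point of $j_{l'}$. Write $j_{l'}(x)=\sqrt{\pi/(2x)}J_{l'+1/2}(x)$. Then
\[
\frac{j_{l'}'(x)}{j_{l'}(x)}=\frac{J_{l'+1/2}'(x)}{J_{l'+1/2}(x)}-\frac{1}{2x}.
\]
We use the recurrence $J_\nu'(x)=\frac{\nu}{x}J_\nu(x)-J_{\nu+1}(x)$, which gives
\[
\frac{j_{l'}'(x)}{j_{l'}(x)}=\frac{l'}{x}-\frac{J_{l'+3/2}(x)}{J_{l'+1/2}(x)}.
\]
A clean route is to note that $j_{l'}$ is positive and increasing on $(0,x^*)$, where $x^*$ is the first zero of $j_{l'}'$. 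We then invoke a bound $x^*>\sqrt{l'(l'+1)}\ge l'$. This follows because $j_{l'}$ satisfies $(x^2 y')'=(l'(l'+1)-x^2)y$: for $x<\sqrt{l'(l'+1)}$ the function $x^2y'$ is increasing from $0$ while $y>0$, so $y'>0$. The same argument shows $y>0$ up to that point. So $j_{l'}'/j_{l'}>0$ on $(0,l')$, which replaces \eqref{eq:besseljzero_derivative_n1_ineq}. Alternatively we can apply \eqref{eq:besseljzero_derivative_n1_ineq} with order $l'+1/2$ plus a small correction, but the ODE argument is self-contained.

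It remains to carry out the chain of inequalities. Using \eqref{eq:besseljzero_m1_ineq} with $m=l_{max}(n)+1+1/2$ (the Watson bound holds for real order $\nu>-1$):
\[
k_0\le j_{l_{max}+3/2,1}<\sqrt{\tfrac43(l_{max}+5/2)(l_{max}+13/2)}<\sqrt{\tfrac43}(l_{max}+5).
\]
We need this to be at most $L-l_{max}\le l'$. The hypothesis $L\ge(1+\sqrt{4/3})l_{max}+6$ gives $L-l_{max}\ge\sqrt{4/3}\,l_{max}+6>\sqrt{4/3}(l_{max}+5)$, since $6>5\sqrt{4/3}\approx5.77$. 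This explains the constant $6$ in place of $5$.

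The main obstacle is bookkeeping. The terms with $l'=l$ have no logarithmic derivative, and they are harmless because they are nonnegative. One must also check that the case $l=0$, where $\sum_{l'\neq l}$ reduces to $l'=L$, is covered by the same bound.
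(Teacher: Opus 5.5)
Your proposal is correct and follows the paper's proof essentially step by step. The selection rules restrict to $l'\ge L-l_{max}(n)$, the spherical Bessel equation gives $j_{l'}'/j_{l'}>0$ on $(0,l')$, and Watson's bound yields $j_{l+1/2,p}\le j_{l_{max}(n)+3/2,1}<\sqrt{4/3}\,(l_{max}(n)+5)<L-l_{max}(n)\le l'$. Your bootstrap in the ODE step additionally justifies the positivity of $j_{l'}$ on that interval, which the paper takes as a known fact, while your detour through $J_{l'+3/2}/J_{l'+1/2}$ is unnecessary.
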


\begin{proof}
The proof is similar to that of Lemma \ref{lem:dj_pos_2mmax}. It is sufficient to show that 
\begin{align*}
    \sum\limits_{l'\neq l} j_{l+1/2,p}\frac{j_{l'}'(j_{l+1/2,p})}{j_{l'}(j_{l+1/2,p})} \frac{(2l'+1)}{4\pi (2L+1)}  \left(\langle l, 0, l',0|L,0\rangle \right)^2 \geq 0
\end{align*}
for all $l\in \mathbb{N}_0$, $p\in\mathbb{N}$ such that $j_{l+1/2,p}^2 \leq \lambda_n(\mathbb{B})$, $L \geq \big(1+\sqrt{4/3} \big) l_{max}(n)+6$. Selection rules imply that in the sum above only terms with $|L-l|\leq l' \leq L+l$ survive. It is sufficient to show that $j_{l'}'(j_{l+1/2,p})/j_{l'}(j_{l+1/2,p}) > 0$ for all $|L-l|\leq l' \leq L+l$. The spherical Bessel equation 
$$ \frac{d}{dx} \bigg( x^2 \frac{d}{dx}j_\ell(x)\bigg) = (\ell ( \ell+1)-x^2 )j_\ell(x)$$
and the fact that $j_\ell(x)$ is positive for $ 0<x<\ell $ implies that $j_\ell'(x)$ and therefore $j_\ell'(x)/j_\ell(x)$ is positive for $ 0<x<\ell $. So let $l\in \mathbb{N}_0$, $p\in\mathbb{N}$ such that $j_{l+1/2,p}^2 \leq \lambda_n(\mathbb{B})$ and $L \geq \big(1+\sqrt{4/3} \big) l_{max}(n)+6$. Then, by \eqref{eq:besseljzero_m1_ineq}, it follows
\begin{align*}
    j_{l+1/2,p} \leq j_{l_{max}(n)+3/2,1} &< \sqrt{\frac{4}{3}\bigg(l_{max}(n)+\frac{5}{2}\bigg)\bigg(l_{max}(n)+\frac{13}{2}\bigg)} \\
    &<\sqrt{\frac{4}{3}} (l_{max}(n)+5) \\
    &< L - l_{max}(n) \\
    &\leq l' 
\end{align*}
and hence $j_{l'}'(j_{l+1/2,p})/j_{l'}(j_{l+1/2,p}) > 0$.
\end{proof}

\section{Infinite families of saddle points} \label{sec:infinite_families}

Theorem \ref{thm:saddle_point_infinitely_many} is a consequence of the following result. We prove that in two dimensions elliptical ($j=2$) perturbations generate an infinite number of cases where the associated second order perturbation coefficient is negative.

\begin{proposition}\label{prop:saddle_before_simple_n}
Let $n \in \mathbb{N}$ and let $d_j$ be given by \eqref{eq:local_minimizer_disc_case2_dj}. If $\lambda_{n+1}(\mathbb{D})$ is a single eigenvalue, i.e.~$\lambda_n(\mathbb{D})<\lambda_{n+1}(\mathbb{D})$ and $\lambda_{n+1}(\mathbb{D}) = j_{0p}^2$ for some $p\geq 2$, then $ d_2+ d_{-2} <0$. 
\end{proposition}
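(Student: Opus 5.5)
The plan is to make $d_2+d_{-2}$ explicit and to show that one coupling term is very large and negative: the one between the last $m=2$ eigenvalue $j_{2,p-1}^2\le\lambda_n(\mathbb{D})$ and the next eigenvalue $j_{0p}^2=\lambda_{n+1}(\mathbb{D})$. First I would write out $d_2+d_{-2}$ from \eqref{eq:local_minimizer_disc_case2_dj}. At a zero $x=j_{0q}$ of $J_0$, the recurrences $J_2=\frac{2}{x}J_1-J_0$ and $J_2'=J_1-\frac{2}{x}J_2$ give $xJ_2'(x)/J_2(x)=\frac{x^2}{2}-2$. So each simple eigenvalue contributes $2\pi j_{0q}^2(j_{0q}^2-2)$. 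The pairs with $m=1$ contain the $\delta_{j=2m}$ term and the ratios $xJ_{-1}'/J_{-1}=xJ_1'/J_1$ and $xJ_3'/J_3$ at $x=j_{1q}$. The pairs with $m\geq 2$ contain $xJ_{m-2}'/J_{m-2}$ and $xJ_{m+2}'/J_{m+2}$ at $x=j_{mq}$, and these can be treated uniformly with the $m=1$ pairs via $J_{-\nu}=\pm J_\nu$.

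Next I would express every ratio through the Mittag-Leffler expansion
\begin{align*}
x\frac{J_\nu'(x)}{J_\nu(x)}=\nu-2\sum_{k\geq1}\frac{x^2}{j_{\nu k}^2-x^2}.
\end{align*}
This turns $d_2+d_{-2}$ into three kinds of terms: explicit constants, plus a double sum over pairs $(x,y)=(j_{mq},j_{m\pm2,k})$ of weighted terms $-x^4/(y^2-x^2)$. When both $x^2$ and $y^2$ are at most $\lambda_n(\mathbb{D})$, the two mirror terms should combine, using the matching weights $2\pi$ and $4\pi$ times multiplicity, into a negative quantity of the form $-(x^2+y^2)$. Pairs with $y^2>\lambda_n(\mathbb{D})$ remain unpaired, and each of them is negative because $y>x$.

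The decisive term is the unpaired one with $x=j_{2,p-1}$ and $y=j_{0p}$, which appears through the $J_{m-2}=J_0$ ratio of the $m=2$ cluster. By interlacing, $j_{2,p-1}<j_{0p}$, and McMahon's asymptotics give $j_{0p}-j_{2,p-1}=O(1/p)$. Hence this single term is of size $-c\,j_{0p}^4\cdot p\sim -c' p^5$, or at least of order $p^4$ up to logarithms. The hypothesis $\lambda_n(\mathbb{D})<j_{0p}^2=\lambda_{n+1}(\mathbb{D})$ guarantees exactly that $j_{2,p-1}^2\le\lambda_n(\mathbb{D})$ while $j_{0p}^2$ is excluded, so this term is present and not compensated by a partner.

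The positive contributions are the constants $\nu$, $1$, $2$, the terms $j_{0q}^4$, and so on. Summed over all eigenvalues up to $\lambda_n(\mathbb{D})\approx (p\pi)^2$, there are $O(p^2)$ clusters with weight $O(p^2)$ each, so they total $O(p^4)$. This must be beaten by the negative terms, for which I would use the paired $-(x^2+y^2)$ contributions together with the near-resonant term above. I expect this quantitative comparison to be the main obstacle. The difficulty is making the estimates uniform in $p$: I would need explicit two-sided bounds on $j_{0p}-j_{2,p-1}$, for instance from Watson's inequalities or from monotonicity of $j_{\nu k}$ in $\nu$ combined with bounds on $\partial j_{\nu k}/\partial\nu$. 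I would also need to show that the remaining positive mass is genuinely $o$ of the resonant term, or else use the paired negative terms to cancel it. The finitely many small $p$ not covered by the asymptotic estimate would then be checked by direct evaluation.
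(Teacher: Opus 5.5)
Your plan has a genuine gap. The size estimates it rests on are wrong, and once they are corrected the argument needs a sharp bound that it does not provide.

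\emph{The resonant term is of order $p^4$, not $p^5$.} We have $j_{0p}-j_{2,p-1}\sim 2/(p\pi)$ while $j_{0p}+j_{2,p-1}\sim 2p\pi$. So the denominator $j_{0p}^2-j_{2,p-1}^2$ tends to $4$; it is about $4.10$ for $p=2$ and $4.04$ for $p=3$. Hence your term $-8\pi j_{2,p-1}^4/(j_{0p}^2-j_{2,p-1}^2)$ is approximately $-2\pi j_{0p}^4$.

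\emph{The mirror pairs combine to a positive quantity.} With weights $4\pi x^2$ and $4\pi y^2$,
\[
-\frac{8\pi x^4}{y^2-x^2}-\frac{8\pi y^4}{x^2-y^2}=8\pi(x^2+y^2)>0 .
\]

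\emph{The positive mass is not $O(p^4)$.} There are of order $p^3$ such pairs, each of size of order $p^2$. Equivalently, in the closed form of Lemma~\ref{lem:d2_explicit_sumterms}, to leading order:
\begin{itemize}
\item the $m=0$ terms $2\pi j_{0q}^2(j_{0q}^2-2)$ sum to about $\frac25\lambda_n(\mathbb{D})^{5/2}$;
\item the $m=1$ terms sum to about $\frac15\lambda_n(\mathbb{D})^{5/2}$;
\item the $m\ge2$ terms sum to about $-\frac45\sum_{m\ge2}\frac{1}{m^2-1}\,\lambda_n(\mathbb{D})^{5/2}=-\frac35\lambda_n(\mathbb{D})^{5/2}$.
\end{itemize}
So the sign of $d_2+d_{-2}$ is decided only after a cancellation at order $p^5$, and what survives is at most of the same order $p^4$ as your resonant term. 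To conclude, you would need an upper bound on everything except the resonant term, with a sharp constant (roughly below $2\pi j_{0p}^4$). Two-sided bounds on $j_{0p}-j_{2,p-1}$ plus a finite check for small $p$ cannot supply this.

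The missing idea is an a priori upper bound obtained variationally rather than term by term. For $\lambda_n(\mathbb{D})<\lambda_{n+1}(\mathbb{D})$ the paper proves $d_2+d_{-2}\le 8S_n(\mathbb{D})$ by comparing the area-preserving ellipses $E_\varepsilon$ in two ways:
\begin{itemize}
\item The variational principle for eigenvalue sums, with the first $n$ disk eigenfunctions as trial functions, gives $S_n(E_\varepsilon)\le\cosh(2\varepsilon)S_n(\mathbb{D})$. This uses that their span is invariant under rotation by $\pi/2$.
\item Proposition~\ref{prop:prop_Sn_exp} gives $S_n(E_\varepsilon)=S_n(\mathbb{D})+\frac{\varepsilon^2}{4}(d_2+d_{-2})+O(\varepsilon^3)$.
\end{itemize}
The paper then applies this bound at $n+1$, which is allowed since $j_{0p}^2$ is simple. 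It uses the exact identity $d_2(n+1)=d_2(n)+2\pi j_{0p}^2(j_{0p}^2-2)$ from the closed form; to leading order this jump is exactly your resonant term. Finally it bounds $S_{n+1}(\mathbb{D})$ by P\'olya's inequality on the disk, $n+1\le j_{0p}^2/4$. Together these give $d_2(n)\le 4\pi(2j_{01}^2-j_{0p}^2)<0$.

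Two smaller points:
\begin{itemize}
\item For $m=1$, the $j=2=2m$ term carries $\delta_{j=2m}$ and no Bessel ratio, since $J_1(j_{1q})=0$. Only the $J_3$ ratio occurs.
\item The ratios at the zeros are explicit via the recurrences, so the Mittag-Leffler expansion only reintroduces infinite sums that must then be controlled.
\end{itemize}
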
 

By Theorem \ref{thm:local_minimizer_disc}, this means that the disk is a saddle point of $S_n(\Omega)$ if $\lambda_{n+1}(\mathbb{D})$ is a single eigenvalue - and the disk has infinitely many of them.

To prove the proposition, we first write $d_2+ d_{-2}$ in closed form, exploiting the specific case $j=2$.

\begin{lemma} \label{lem:d2_explicit_sumterms}
Let $n \in \mathbb{N}$ such that $\lambda_n(\mathbb{D})<\lambda_{n+1}(\mathbb{D})$. Then
    \begin{align*}
        d_2 + d_{-2} =  \sum_{j_{0p}^2 \leq \lambda_n(\mathbb{D})} 2\pi j_{0p}^2 (j_{0p}^2-2) +  \sum_{j_{1p}^2 \leq \lambda_n(\mathbb{D})} \pi j_{1p}^4 -    \sum\limits_{\substack{j_{mp}^2 \leq \lambda_n(\mathbb{D}), \\ m \geq 2}}  4 \pi j_{mp}^2 \bigg(2  + \frac{j_{mp}^2}{m^2-1} \bigg).
    \end{align*}
\end{lemma}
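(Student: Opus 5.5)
We start from the definition \eqref{eq:local_minimizer_disc_case2_dj} with $j=\pm 2$ and add the two contributions cluster by cluster. For each term the only non-explicit quantities are the logarithmic derivatives $x J_\nu'(x)/J_\nu(x)$, evaluated at a zero $x=j_{mp}$ of $J_m$ with $\nu = m\mp 2$ (or $\nu=\pm 2$ when $m=0$). The plan is to eliminate all of these using the three-term recurrence $J_{\nu-1}(x)+J_{\nu+1}(x)=\frac{2\nu}{x}J_\nu(x)$ together with the derivative relations $J_\nu'(x) = J_{\nu-1}(x) - \frac{\nu}{x}J_\nu(x) = \frac{\nu}{x}J_\nu(x) - J_{\nu+1}(x)$, which were already used in the proof of Theorem \ref{thm:local_minimizer_disc}.

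\textbf{Step 1: the case $m=0$.} Since $J_0(x)=0$, the recurrence at $\nu=1$ gives $J_2(x)=\frac{2}{x}J_1(x)$. Then
\[
J_2'(x)=J_1(x)-\tfrac{2}{x}J_2(x)=J_1(x)\bigl(1-\tfrac{4}{x^2}\bigr).
\]
Hence $x J_2'/J_2 = \frac{x^2}{2}-2$, and the bracket becomes $1 + x J_2'/J_2 = \frac{x^2}{2}-1$. The contribution for $j=2$ and $j=-2$ is the same (up to the sign convention $J_{-2}=J_2$, which is irrelevant for ratios). The two therefore sum to $2\pi j_{0p}^2(j_{0p}^2-2)$.

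\textbf{Step 2: the case $m=1$.} For $j=2$ we have $2m=j$, so the term is $4\pi j_{1p}^2\cdot 2$. For $j=-2$ we need $x J_3'/J_3$ at a zero of $J_1$. Iterating the recurrence gives $J_2 = -J_0$ (from $\nu=1$) and $J_3=\frac{4}{x}J_2$, so
\[
J_3' = J_2-\tfrac{3}{x}J_3 = J_2\bigl(1-\tfrac{12}{x^2}\bigr),
\qquad
x J_3'/J_3 = \tfrac{x^2}{4}-3 .
\]
The total contribution is then $4\pi x^2\bigl(2+1+\frac{x^2}{4}-3\bigr)=\pi x^4$.

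\textbf{Step 3: the case $m\ge 2$ (the main computation).} Here we need the sum $2+x J_{m-2}'/J_{m-2}+x J_{m+2}'/J_{m+2}$ at $x=j_{mp}$. From $J_m=0$ the recurrence gives $J_{m-2}=\frac{2(m-1)}{x}J_{m-1}$ and $J_{m+2}=\frac{2(m+1)}{x}J_{m+1}$, together with $J_{m+1}=-J_{m-1}$. The derivative formulas then give
\[
x J_{m-2}' = (m-2)J_{m-2} - xJ_{m-1},
\qquad
x J_{m+2}' = xJ_{m+1}-(m+2)J_{m+2}.
\]
Taking ratios yields
\[
x J_{m-2}'/J_{m-2} = (m-2) - \frac{x^2}{2(m-1)},
\qquad
x J_{m+2}'/J_{m+2} = \frac{x^2}{2(m+1)}-(m+2).
\]
Adding these and the constant $2$ gives
\[
-2 - \frac{x^2}{2}\Bigl(\frac{1}{m-1}-\frac{1}{m+1}\Bigr) = -2-\frac{x^2}{m^2-1},
\]
and multiplying by $4\pi x^2$ produces the claimed term.

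The main risk lies in the sign and index bookkeeping of these recurrences, in particular keeping the convention $J_{-n}=J_n$ consistent, since it appears in the original formula only through ratios. I will double-check each identity using the already verified special values $xJ_{m\pm1}'/J_{m\pm1}=m-1,\,-(m+1)$. The condition $\lambda_n<\lambda_{n+1}$ is needed only so that the formula for $d_j$ from Theorem \ref{thm:local_minimizer_disc} applies.
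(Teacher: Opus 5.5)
Your proposal is correct and follows essentially the same route as the paper: you evaluate the logarithmic derivatives $xJ_{m\pm 2}'/J_{m\pm 2}$ at $x=j_{mp}$ via the three-term recurrence and the derivative identities, obtaining exactly the paper's expressions $\frac{x^2}{2(m+1)}-(m+2)$ and $(m-2)-\frac{x^2}{2(m-1)}$, and you insert them case by case for $m=0$, $m=1$ and $m\ge 2$. The only thing you leave implicit is that the factors you cancel are nonzero. These are $J_{m-1}(j_{mp})=J_m'(j_{mp})$, and at $m=1$ also $J_0(j_{1p})$. The first is nonzero because zeros of $J_m$ are simple, and the second because $J_0$ and $J_1$ have no common positive zeros.
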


\begin{proof}
    Recall the recurrence relations $J_{\nu-1}(x)+ J_{\nu+1}(x) = \frac{2\nu}{x} J_\nu(x)$, $J_\nu'(x) = J_{\nu-1}(x) - \frac{\nu}{x}J_\nu(x) = \frac{\nu}{x}J_\nu(x) - J_{\nu+1}(x)$ for $\nu \in \mathbb{R}$. At $x = j_{mp}$, these yield $J_{m}'(j_{mp}) = J_{m-1}(j_{mp}) = - J_{m+1}(j_{mp}) \neq 0$ and 
    \begin{align*}
        j_{mp} \frac{J_{m+2}'(j_{mp})}{J_{m+2}(j_{mp})} &= \frac{j_{mp}^2}{2(m+1)} - (m+2) \qquad (m \geq 0),\\
        j_{mp} \frac{J_{m-2}'(j_{mp})}{J_{m-2}(j_{mp})} &= (m-2) - \frac{j_{mp}^2}{2(m-1)}\qquad (m \geq 2).
    \end{align*}
    The statement of the lemma follows after inserting these expressions accordingly into the identity
    \begin{align*}
          d_2 + d_{-2} &=\sum\limits_{j_{0p}^2 \leq \lambda_n(\mathbb{D})} 2\pi j_{0p}^2 \left(2 +  2 j_{0p}\frac{J_2'(j_{0p})}{J_2(j_{0p})}   \right)  \\
& \qquad+ \sum\limits_{j_{1p}^2 \leq \lambda_n(\mathbb{D})} 4\pi j_{1p}^2 \left( 3  +  j_{1p}\frac{J_{3}'(j_{1p})}{J_{3}(j_{1p})}  \right) \\
& \qquad+ \sum\limits_{\substack{j_{mp}^2 \leq \lambda_n(\mathbb{D}), \\ m \geq 2}} 4\pi j_{mp}^2 \left( 2 +   j_{mp}\frac{J_{m-2}'(j_{mp})}{J_{m-2}(j_{mp})}  +j_{mp}\frac{J_{m+2}'(j_{mp})}{J_{m+2}(j_{mp})}  \right) .
    \end{align*}
\end{proof}

The second ingredient is an a priori bound on $d_2+ d_{-2}$.

\begin{lemma} \label{lem:apriori_d2_bound}
Let $n \in \mathbb{N}$ such that $\lambda_n(\mathbb{D}) < \lambda_{n+1}(\mathbb{D})$. Then
    \begin{align*}
        d_2 + d_{-2} \leq 8 S_n(\mathbb{D}).
    \end{align*}
\end{lemma}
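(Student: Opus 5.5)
The plan is to get an upper bound on the second variation from a variational (trace) principle, using an explicit area-preserving linear deformation. For $t\in\mathbb{R}$ let $T_t=\operatorname{diag}(e^{t},e^{-t})$ and $E_t=T_t\mathbb{D}$. This is an ellipse with $|E_t|=\pi$. Its boundary in polar form is $r(\varphi)=(e^{-2t}\cos^2\varphi+e^{2t}\sin^2\varphi)^{-1/2}$. Expanding in $t$ gives $r(\varphi)=1+t\cos 2\varphi+O(t^2)$. So $E_t$ is a near-circular domain $\Omega_\varepsilon$ with $\varepsilon=t$, $a_{\pm 2}=\tfrac12$, all other $a_j=0$, and suitable second-order coefficients $b_j$.

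\textbf{Step 1 (the perturbative side).} In case (2), the expansion in Proposition \ref{prop:prop_Sn_exp} involves only the $a_j$ once area is normalized. The coefficients $b_j$, $j\neq0$, drop out, and $b_0$ is fixed by \eqref{eq:area_normalize_ajbj}; this is automatically consistent because $|E_t|=\pi$ exactly. I will double-check this rather than take it on faith. Granting it, we get
\[
S_n(E_t)=S_n(\mathbb{D})+t^2\,\tfrac14\,(d_2+d_{-2})+O(t^3).
\]

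\textbf{Step 2 (the variational side).} Let $u_1,\dots,u_n$ be $L^2$-orthonormal Dirichlet eigenfunctions of $\mathbb{D}$. The functions $v_k=u_k\circ T_t^{-1}$ span an $n$-dimensional subspace of $H_0^1(E_t)$ and are orthogonal in $L^2(E_t)$, since $\det T_t=1$. By Ky Fan's trace minimum principle,
\[
\sum_{k\le n}\lambda_k(E_t)\le\sum_{k\le n}\int_{\mathbb{D}}\bigl(e^{-2t}|\partial_x u_k|^2+e^{2t}|\partial_y u_k|^2\bigr)\,dx .
\]
Expand $e^{\mp2t}=1\mp2t+2t^2+O(t^3)$. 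The first-order term is $2t\sum_k\int(|\partial_y u_k|^2-|\partial_x u_k|^2)$. Because $\lambda_n(\mathbb{D})<\lambda_{n+1}(\mathbb{D})$, the chosen $u_k$ span a union of full eigenspaces, and each eigenspace is invariant under the rotation by $\pi/2$, which swaps $\partial_x$ and $\partial_y$. Hence this term vanishes; one can also check it directly in the basis $J_m(j_{mp}r)\cos m\varphi$, $J_m(j_{mp}r)\sin m\varphi$. The quadratic term is at most $2t^2\sum_k\int|\nabla u_k|^2=2t^2\sum_{k\le n}\lambda_k(\mathbb{D})$. Multiplying by $|E_t|=\pi$ yields
\[
S_n(E_t)\le S_n(\mathbb{D})\,(1+2t^2)+O(t^3).
\]

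\textbf{Step 3 (conclusion).} Compare the $t^2$ coefficients in Steps 1 and 2 and let $t\to0$. This gives $\tfrac14(d_2+d_{-2})\le 2S_n(\mathbb{D})$, that is, $d_2+d_{-2}\le 8S_n(\mathbb{D})$.

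The main obstacle is Step 1: rigorously identifying the $\varepsilon^2$ coefficient of $S_n(E_t)$ with $\tfrac14(d_2+d_{-2})$. This requires that the second-order radius coefficients $b_{\pm2},b_{\pm4}$ of the ellipse genuinely cancel in the cluster sums, as they do in the proof of Proposition \ref{prop:prop_Sn_exp}. It also requires justifying that the formal expansion is valid for this analytic perturbation with infinitely many nonzero $b_j$. If needed, I would sidestep the second issue by truncating the Fourier series of the radius at order $\varepsilon^2$. This changes the domain only by $O(\varepsilon^3)$ in radius, and Lipschitz-type dependence of eigenvalues on the radius function then controls the difference.
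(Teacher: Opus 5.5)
Your proposal is correct and follows essentially the same route as the paper. The paper uses the same area-preserving ellipse, the same Ky Fan bound with $\pi/2$-rotation invariance giving $S_n(E_\varepsilon)\le S_n(\mathbb{D})\cosh(2\varepsilon)$, and compares this with the $\varepsilon^2$-coefficient $\tfrac14(d_2+d_{-2})$. It handles your Step 1 obstacle exactly as you suggest: it truncates to $a_{\pm2}=\tfrac12$, $b_0=-\tfrac14$, $b_{\pm4}=\tfrac38$ (the $b_{4}$ term already cancels within complete $m=2$ clusters in Proposition \ref{prop:prop_Sn_exp}), and sandwiches $E_\varepsilon$ between $(1\pm C\varepsilon^3)\Omega_\varepsilon$ using domain monotonicity and scaling.
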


\begin{proof}
    Consider the ellipse $E_\varepsilon = \{ (e^\varepsilon x, e^{-\varepsilon} y) \, : \, (x,y) \in\mathbb{D} \}$ for small $\varepsilon$. Note that $|E_\varepsilon| = \pi $. Its Dirichlet eigenvalues $\lambda_k(E_\varepsilon)$ coincide with those of the quadratic form
    \begin{align*}
        q_\varepsilon[u]=\int_{\mathbb{D}} e^{-2\varepsilon} |\partial_x u|^2 +  e^{2\varepsilon} |\partial_y u|^2 \, d(x,y), \qquad u \in H_0^1(\mathbb{D}). 
    \end{align*}
    Let $u_1, ... , u_n$ be an orthonormal set of Dirichlet eigenfunctions of $\mathbb{D}$. By the variational principle for sums of eigenvalues, see \cite[Proposition 1.33]{Frank2022}, we obtain
    \begin{align*}
         \sum_{k=1}^n \lambda_k(E_\varepsilon) \leq  \sum_{k=1}^n q_\varepsilon[u_k] = e^{-2\varepsilon}\sum_{k=1}^n \Vert \partial_x u_k \Vert_{L^2(\mathbb{D})}^2  + e^{2\varepsilon}\sum_{k=1}^n \Vert \partial_y u_k\Vert_{L^2(\mathbb{D})}^2  .
    \end{align*}
    Since $\lambda_n(\mathbb{D}) < \lambda_{n+1}(\mathbb{D})$, the space $V:= \operatorname{span}\{ u_1, ..., u_n\}$ is invariant under rotations of $\pi/2$, thus 
    \begin{align*}
        \sum_{k=1}^n\Vert \partial_x u_k\Vert_{L^2(\mathbb{D})}^2 = \sum_{k=1}^n\Vert \partial_y u_k\Vert_{L^2(\mathbb{D})}^2 = \frac{1}{2} \sum_{k=1}^n \Vert \nabla u_k\Vert_{L^2(\mathbb{D})}^2 
    \end{align*}
    and therefore
     \begin{align} \label{eq:S_n_E_eps_1}
        S_n(E_\varepsilon) \leq S_n(\mathbb{D}) \cosh(2\varepsilon) = S_n(\mathbb{D}) (1+ 2 \varepsilon^2 + O(\varepsilon^4)) \qquad (\varepsilon \to 0).
    \end{align}
    On the other hand, the ellipse $E_\varepsilon$ can be described by the radius function
    \begin{align*}
        r(\varphi,\varepsilon) &= (\cosh(2\varepsilon) - \sinh(2\varepsilon) \cos(2\varphi))^{-1/2} \\
        &= 1 + \varepsilon \cos(2\varphi ) + \varepsilon^2 \bigg( - \frac{1}{4} + \frac{3}{4} \cos(4\varphi)\bigg) + O(\varepsilon^3) \qquad (\varepsilon \to 0),
    \end{align*}
    where the remainder is uniform in $\varphi$. Let us consider $\Omega_\varepsilon$ obtained by the perturbation coefficients $a_{\pm 2} = \frac{1}{2}$, $b_0 = - \frac{1}{4}$, $b_{\pm 4} = \frac{3}{8}$ and all other $a_j$, $b_j$ set to zero. Then by Proposition \ref{prop:prop_Sn_exp} (2), 
    \begin{align*} 
        S_n(\Omega_\varepsilon) = S_n(\mathbb{D}) + \frac{\varepsilon^2}{4} (d_2 + d_{-2}) +O(\varepsilon^3) \qquad (\varepsilon \to 0).
    \end{align*}
    Since the radius functions of $E_\varepsilon$ and $\Omega_\varepsilon$ differ only by a factor of $1+O(\varepsilon^3)$ uniformly in $\varphi$, we conclude $(1-C\varepsilon^3)\Omega_\varepsilon \subset E_\varepsilon \subset (1+C\varepsilon^3) \Omega_\varepsilon $ for some $C>0$ and by domain monotonicity and scaling of eigenvalues, 
    \begin{align} \label{eq:S_n_E_eps_2}
        S_n(E_\varepsilon) =S_n(\Omega_\varepsilon)  (1+ O(\varepsilon^3)) =  S_n(\mathbb{D}) + \frac{\varepsilon^2}{4} (d_2 + d_{-2}) +O(\varepsilon^3) \qquad (\varepsilon \to 0).
    \end{align}
    Comparing \eqref{eq:S_n_E_eps_1} and \eqref{eq:S_n_E_eps_2}, we obtain $ d_2 + d_{-2} \leq 8 S_n(\mathbb{D})$.
\end{proof}

\begin{proof}[Proof of Proposition \ref{prop:saddle_before_simple_n}]
    Let $d_2(n)= d_2 + d_{-2}$ evaluated for $S_n(\Omega)$. Then $ d_2(n+1) = d_2(n) + 2 \pi j_{0p}^2 (j_{0p}^2-2)$ by Lemma \ref{lem:d2_explicit_sumterms} and therefore by Lemma \ref{lem:apriori_d2_bound}, 
    \begin{align*}
        d_2(n) \leq 8 S_{n+1}(\mathbb{D}) -  2 \pi j_{0p}^2 (j_{0p}^2 - 2 ).
    \end{align*}
    The validity of P{\'o}lya's conjecture on the disk $\mathbb{D}$, see \cite{Filonov2022}, shows that $n+1 \leq j_{0p}^2/4$, so
    \begin{align*}
        S_{n+1}(\mathbb{D}) \leq \pi \lambda_1 (\mathbb{D}) + n \pi \lambda_{n+1}(\mathbb{D}) \leq \pi j_{01}^2 + \frac{\pi}{4} j_{0p}^4 - \pi j_{0p}^2
    \end{align*}
    and hence
    \begin{align*}
        d_2(n) \leq 4\pi (2 j_{01}^2 -  j_{0p}^2) < 0
    \end{align*}
    for $p\geq 2$, since $2 j_{01}^2 \approx 11.57 $ and $ j_{02}^2 \approx 30.47$.
\end{proof}

The same strategy can be used to find a second infinite family of $n$'s for which the disk is a saddle point of $S_n(\Omega)$. If $\lambda_n(\mathbb{D})<\lambda_{n+1}(\mathbb{D})$ and $\lambda_{n+1}(\mathbb{D}) = \lambda_{n+2}(\mathbb{D}) = j_{1p}^2$, then $ d_2(n) \leq 8 S_{n+2}(\mathbb{D}) -   \pi j_{1p}^4 $. Two-term asymptotics for sums of Dirichlet eigenvalues and the validity of P{\'o}lya's conjecture on the disk imply that $ d_2(n)<0 $ for sufficiently large $n$.

Surprisingly, in three dimensions, a result parallel to Proposition \ref{prop:saddle_before_simple_n} is false. Whereas in two dimensions, elliptical perturbations generate an infinite family of saddle points, ellipsoidal perturbations are asymptotically positive definite in three dimensions and do not generate an infinite family of saddle points in the same way.

\begin{proposition} \label{prop:D2toinfinity}
 Let $n\in\mathbb{N}$ such that $\lambda_n(\mathbb{B})< \lambda_{n+1}(\mathbb{B})$. Then
\begin{align*}
    D_2 \geq \frac{9\lambda_n(\mathbb{B})^{5/2} }{250\pi^2}(1+o(1))  \qquad (n\to \infty)
\end{align*}
and in particular $D_2 \to \infty$ as $n \to \infty$.
\end{proposition}

To prove Proposition \ref{prop:D2toinfinity}, we use an equivalent representation of $D_2$.

\begin{lemma}
    Let $n\in\mathbb{N}$ such that $\lambda_n(\mathbb{B})< \lambda_{n+1}(\mathbb{B})$. Then
\begin{align*}
    D_2 = \sum\limits_{j_{l+1/2,p}^2 \leq \lambda_n(\mathbb{B})} f_l(j_{l+1/2,p})
\end{align*}
where
\begin{align*}
   f_l (x) =  \frac{(2l+1) x^2}{2\pi (2l-1)(2l+3)} \bigg( \frac{3x^2}{ (2l-1)(2l+3)} - (2l^2+2l-3)\bigg) .
\end{align*}
\end{lemma}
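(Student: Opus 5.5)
The plan is to specialize the general formula \eqref{eq:local_minimizer_ball_case2_DL} to $L=2$. Each cluster summand then becomes an explicit rational function of $x=j_{l+1/2,p}$, using two inputs: closed forms for the few Clebsch--Gordan coefficients that survive, and Bessel recurrences for the logarithmic derivatives. This mirrors the proof of Lemma \ref{lem:d2_explicit_sumterms} in two dimensions.

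\textbf{Surviving indices and Clebsch--Gordan values.} For $L=2$, the selection rules of Lemma \ref{lem:cg_coeff} (2) restrict $l'$ to $|l-2|\le l'\le l+2$. Corollary \ref{cor:d_coeff_lsum_odd} then kills $l'=l\pm1$, because $l+l'-2$ is odd there. So in the sum over $l'\neq l$ only $l'=l+2$ (always) and $l'=l-2$ (when $l\ge2$) remain. The diagonal coefficient was computed in the proof of Proposition \ref{prop:prop3d_Sn_exp_linear}:
\begin{align*}
\langle l,0,l,0|2,0\rangle^2=\frac{5l(l+1)}{(2l-1)(2l+1)(2l+3)}.
\end{align*}
For $\langle l,0,l+2,0|2,0\rangle^2$ I would first use permutation symmetry (2) to rewrite it as a multiple of $\langle l,0,2,0|l+2,0\rangle^2$. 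That is a stretched coupling, with the standard closed form $\frac{3(l+1)(l+2)}{(2l+1)(2l+3)}$. The case $l'=l-2$ follows by exchanging the roles of $l$ and $l-2$.

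\textbf{Logarithmic derivatives.} At $x=j_{l+1/2,p}$ we have $j_l(x)=0$. The three-term recurrence $j_{l-1}+j_{l+1}=\tfrac{2l+1}{x}j_l$ then gives $j_{l+1}(x)=-j_{l-1}(x)$. It also gives $j_{l+2}=\tfrac{2l+3}{x}j_{l+1}$ and $j_{l-2}=\tfrac{2l-1}{x}j_{l-1}$. Combining these with the derivative recurrences $j_\nu'=j_{\nu-1}-\tfrac{\nu+1}{x}j_\nu=\tfrac{\nu}{x}j_\nu-j_{\nu+1}$ yields
\begin{align*}
x\frac{j_{l+2}'(x)}{j_{l+2}(x)}=\frac{x^2}{2l+3}-(l+3),\qquad x\frac{j_{l-2}'(x)}{j_{l-2}(x)}=(l-2)-\frac{x^2}{2l-1}.
\end{align*}

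\textbf{Assembly.} I then substitute everything into the bracket of \eqref{eq:local_minimizer_ball_case2_DL}, multiply by $(2l+1)x^2$, and collect powers of $x^2$. The $x^4$ part comes only from the two log-derivative terms, and their coefficients should combine to $\frac{3(2l+1)}{2\pi(2l-1)^2(2l+3)^2}$. The $x^2$ part collects the constant $1/\pi$, the diagonal Clebsch--Gordan term, and the constant pieces $-(l+3)$ and $(l-2)$. These should simplify to $-\frac{(2l+1)(2l^2+2l-3)}{2\pi(2l-1)(2l+3)}$, which would give exactly $f_l(x)$.

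The boundary cases $l=0$ and $l=1$ have to be checked separately, since $l'=l-2$ is absent there (and for $l=0$ the diagonal coefficient vanishes, consistent with the formula). I expect $f_l$ to cover them automatically, because the $l'=l-2$ Clebsch--Gordan factor $\propto l(l-1)$ vanishes.

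\textbf{Main obstacle.} I expect the hardest part to be the algebra of this final combination, together with pinning down the correct Clebsch--Gordan normalization for $l'=l\pm2$. I would verify both at small $l$, for instance $l=0$, where $D_2$ reduces to the single term with $l'=2$.

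One further point needs care: the prefactor $|\mathbb{B}|^{2/3}$ appearing in \eqref{eq:local_minimizer_ball_case2_DL}. Either it is absorbed as a positive constant, which is harmless for the sign and growth statements of Proposition \ref{prop:D2toinfinity}, or the identity should be read as holding up to that factor. I would state this normalization explicitly.
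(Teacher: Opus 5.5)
Your route is the paper's route. You restrict $l'$ to $l\pm2$ by the selection rules and Corollary \ref{cor:d_coeff_lsum_odd}, insert closed forms for the Clebsch--Gordan coefficients and the two logarithmic derivatives, and simplify; your recurrence computation for the logarithmic derivatives matches the paper's exactly. However, one input is wrong, and the assembly fails as written. The stretched coefficient is $\langle l,0,2,0|l+2,0\rangle^2=\frac{3(l+1)(l+2)}{2(2l+1)(2l+3)}$, not $\frac{3(l+1)(l+2)}{(2l+1)(2l+3)}$. Your value equals $2$ at $l=0$ and $6/5$ at $l=1$, which is impossible for a squared Clebsch--Gordan coefficient.

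With the correct value, permutation symmetry (2) gives $\langle l,0,l+2,0|2,0\rangle^2=\frac{15(l+1)(l+2)}{2(2l+1)(2l+3)(2l+5)}$. Symmetry (1) together with $l\mapsto l-2$ gives $\langle l,0,l-2,0|2,0\rangle^2=\frac{15l(l-1)}{2(2l-3)(2l-1)(2l+1)}$. These are the values the paper uses. The two off-diagonal terms of the bracket then become $\frac{3(l+1)(l+2)}{4\pi(2l+1)(2l+3)}\bigl(\frac{x^2}{2l+3}-(l+3)\bigr)$ and $\frac{3l(l-1)}{4\pi(2l-1)(2l+1)}\bigl((l-2)-\frac{x^2}{2l-1}\bigr)$, and your stated targets do come out. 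The $x^2$-part of the bracket collapses through $(l+1)(l+2)(2l-1)^2-l(l-1)(2l+3)^2=2(2l+1)$. The constant part has numerator $-2(2l+1)(2l^2+2l-3)$ over $4\pi(2l-1)(2l+1)(2l+3)$. With your extra factor of $2$, both off-diagonal contributions double. At $l=0$ you would get $\frac{x^4}{3\pi}-\frac{2x^2}{\pi}$ instead of $f_0(x)=\frac{x^4}{6\pi}-\frac{x^2}{2\pi}$. Your planned $l=0$ check would expose this, but the step as proposed is incorrect. As you expected, the vanishing factor $l(l-1)$ in the $l'=l-2$ coefficient does handle $l=0,1$ uniformly, with no poles.

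Your remark on $|\mathbb{B}|^{2/3}$ is correct. The lemma as stated, and the paper's one-line proof, drop this factor from \eqref{eq:local_minimizer_ball_case2_DL}, so strictly $D_2=|\mathbb{B}|^{2/3}\sum f_l(j_{l+1/2,p})$. Since this is a positive constant, the sign and growth conclusions of Proposition \ref{prop:D2toinfinity} are unaffected.
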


\begin{proof}
    By selection rules and Corollary \ref{cor:d_coeff_lsum_odd}, the sum $\sum_{l'\neq l}$ in \eqref{eq:local_minimizer_ball_case2_DL} for $L=2$ only contains terms with $l' = l\pm 2$ if $l\geq 2$, $l'=3$ if $l=1$ or $l'=2$ if $l=0$. Furthermore,
    \begin{align*}
        (\langle l, 0, l,0|2,0\rangle )^2 &= \frac{5l(l+1)}{(2l-1)(2l+1)(2l+3)}, \qquad l\geq 0, \\
        (\langle l, 0, l-2,0|2,0\rangle )^2 &= \frac{15l(l-1)}{2(2l-3)(2l-1)(2l+1)},  \qquad l\geq 2,\\
        (\langle l, 0, l+2,0|2,0\rangle )^2 &= \frac{15(l+1)(l+2)}{2(2l+1)(2l+3)(2l+5)},  \qquad l\geq 0,
    \end{align*}
    and from recurrence relations of spherical Bessel functions follows
    \begin{align*}
        j_{l+1/2,p} \frac{j_{l+2}'(j_{l+1/2,p})}{j_{l+2}(j_{l+1/2,p})} &= \frac{j_{l+1/2,p}^2}{2l+3} - (l+3) , \qquad \qquad l \geq 0, \\
        j_{l+1/2,p} \frac{j_{l-2}'(j_{l+1/2,p})}{j_{l-2}(j_{l+1/2,p})} &= (l-2) - \frac{j_{l+1/2,p}^2}{2l-1}, \qquad\qquad l\geq 2.
    \end{align*}
    Inserting these expressions in \eqref{eq:local_minimizer_ball_case2_DL} for $L=2$ yields the desired closed form of $D_2$.
\end{proof}

\begin{proof}[Proof of Proposition \ref{prop:D2toinfinity}]
With the closed form sum for $D_2$ obtained in the previous lemma, we split its contributions into
\begin{align*}
     D_2 = \sum\limits_{j_{1/2,p}^2 \leq \lambda_n(\mathbb{B})} f_0(j_{1/2,p}) +  \sum\limits_{j_{3/2,p}^2 \leq \lambda_n(\mathbb{B})} f_1(j_{3/2,p}) +  \sum\limits_{\substack{j_{l+1/2,p}^2 \leq \lambda_n(\mathbb{B}), \\l\geq 2}} f_l(j_{l+1/2,p}).
\end{align*}
Omitting the positive $x^4$-terms of $f_l(x)$ for $l\geq 2$ and estimating $(2l^2+2l-3)/[(2l-1)(2l+3)]<1/2$, we get
$f_l(x) > -(2l+1) x^2/(4\pi) $, $l\geq 2$, and therefore 
\begin{align*}
    \sum\limits_{\substack{j_{l+1/2,p}^2 \leq \lambda_n(\mathbb{B}), \\l\geq 2}} f_l(j_{l+1/2,p}) > -\frac{1 }{4\pi} \sum\limits_{\substack{j_{l+1/2,p}^2 \leq \lambda_n(\mathbb{B}), \\l\geq 2}} (2l+1)j_{l+1/2,p}^2 \geq -  \frac{1 }{4\pi} \sum_{k=1}^n \lambda_k(\mathbb{B})
\end{align*}
where by Weyl's law,
\begin{align*}
    \sum_{k=1}^n \lambda_k(\mathbb{B}) = \frac{2}{15\pi} \lambda_{n}(\mathbb{B})^{5/2} (1+o(1)) \qquad (n\to \infty)
\end{align*}
and thus
\begin{align} \label{eq:sum_fl_lower_bnd}
    \sum\limits_{\substack{j_{l+1/2,p}^2 \leq \lambda_n(\mathbb{B}), \\l\geq 2}} f_l(j_{l+1/2,p}) \geq  -\frac{\lambda_{n}(\mathbb{B})^{5/2}}{30 \pi^2 }  (1+o(1))  \qquad (n\to \infty).
\end{align}
For $l=0$ and $l=1$, the functions $f_l(x)$ are
\begin{align*}
    f_0(x) = \frac{x^4}{6\pi} - \frac{x^2}{2\pi},  \qquad \qquad    f_1(x) = \frac{9x^4}{50\pi} - \frac{3x^2}{10\pi},
\end{align*}
and since $j_{1/2,p} = p \pi$ and $j_{3/2,p} \in (p\pi, (p+\frac{1}{2})\pi)$ for $p \in \mathbb{N}$, we have
\begin{align*} 
    \sum\limits_{j_{1/2,p}^2 \leq \lambda_n(\mathbb{B})} f_0(j_{1/2,p}) &= \frac{1}{6\pi} \bigg(\sum\limits_{j_{1/2,p}^2 \leq \lambda_n(\mathbb{B})} j_{1/2,p}^4 \bigg) (1+o(1)) = \frac{\lambda_n(\mathbb{B})^{5/2}}{30\pi^2} (1+o(1)), \\
    \sum\limits_{j_{3/2,p}^2 \leq \lambda_n(\mathbb{B})} f_1(j_{3/2,p}) &= \frac{9}{50\pi} \bigg(\sum\limits_{j_{3/2,p}^2 \leq \lambda_n(\mathbb{B})} j_{3/2,p}^4\bigg) (1+o(1)) = \frac{9\lambda_n(\mathbb{B})^{5/2} }{250\pi^2}(1+o(1)) 
\end{align*}
as $n\to \infty$. Combining these two asymptotics with the asymptotic lower bound for $l\geq2$ from  \eqref{eq:sum_fl_lower_bnd} yields the claim of the proposition.
\end{proof}

We remark that with a bit more effort, one could even show that $D_2>0$ for any $n\in\mathbb{N}$ (which is supported by numerics). First, one would prove that there exist a small explicit constant $c>0$ and a moderately large $n_0\in\mathbb{N}$ such that
\begin{align*}
    D_2 \geq c \lambda_n(\mathbb{B})^{5/2}, \qquad n \geq n_0.
\end{align*}
For this, one needs to replace the asymptotics used in the proof above by quantitative estimates which in the case of the $l\geq 2$ sum can be obtained by the validity of P{\'o}lya's conjecture for the ball whereas for $f_0(x)$ and $f_1(x)$ elementary lower bounds suffice. After that one would need to check the cases $n<n_0$ by certified numerical evaluations in interval arithmetic.

\appendix

\section{Numerical addendum} \label{app:numerical_addendum}

In the tables below we report numerical sample counts for how often the disk and the ball fall into one of the categories of being a non-critical shape, a critical shape, a weak local minimizer, and a saddle point for $S_n(\Omega)$. In both $d=2$ and $d=3$, the number of saddle point cases appears to outweigh the number of weak local minimizer cases in the long run.

\begin{table}[ht]
    \begin{center}
\begin{tabular}{l|rrrr}
$n\leq $   & non-critical & critical & weak loc.~min. & saddle point \\ \hline
$10^1$     & $4$  & $6$ & $3 $ & $ 3$ \\
$10^2$     & $47$ & $53$ & $ 17$ & $36 $ \\
$10^3$     & $490$ & $510$ & $ 87$ & $423 $\\
$10^4$     & $4{,}968$ & $5{,}032$& $ 604$ & $ 4{,}428$\\
$10^5$     & $49{,}900$ & $50{,}100$ & $3{,}637 $ & $ 46{,}463$\\
$10^6$     & $499{,}682$ & $500{,}318$ & $ 14{,}050 $ & $ 486{,}268$\\
\end{tabular}
\end{center}
    \caption{Counts for the disk $\mathbb{D}$. }
    \label{tab:disk_ntable}
\end{table}

\begin{table}[ht]
\begin{center}
\begin{tabular}{l|rrrr}
$n\leq $   & non-critical & critical & weak loc.~min. & saddle point \\ \hline
$10^1$     & $6$          & $4$        & $4$        & $0$ \\
$10^2$     & $83$         & $17$       & $12$       & $5$ \\
$10^3$     & $927$        & $73$       & $43$       & $30$ \\
$10^4$     & $9{,}656$    & $344$      & $165$      & $179$ \\
$10^5$     & $98{,}420$   & $1{,}580$  & $665$      & $915$ \\
$10^6$     & $992{,}684$  & $7{,}316$  & $2{,}804$  & $4{,}512$ \\
\end{tabular}
\end{center}
    \caption{Counts for the ball $\mathbb{B}$. }
    \label{tab:ball_ntable}
\end{table}

\section*{AI usage disclosure}

The author acknowledges the use of Claude Fable 5.1 for mathematical discussions and editorial assistance. In Sections 1-3, Claude has been used only for auditing and reviewing text by the author. In Section 4, Claude majorly contributed to the results and proofs of the section. All AI-assisted arguments and computations were independently checked by the author, who takes full responsibility for the contents of the paper.

\newpage

\printbibliography

\end{document}